\DeclareMathOperator*{\essup}{ess\,sup}
\DeclareMathOperator{\domain}{Dom}
\DeclareMathOperator{\diam}{diam}
\DeclareMathOperator{\id}{Id}
\newtheorem{thm}{Theorem}[section]
\newtheorem*{thm*}{Theorem}
\newtheorem{propo}[thm]{Proposition}
\newtheorem{lem}[thm]{Lemma}
\newtheorem*{rem*}{Remark}
\newcommand{\R}{\mathbb{R}}
\def\a{\alpha}
\def\b{\beta}
\def\ab{\alpha,\beta}
\def\t{\theta}
\def\v{\varphi}
\def\s{\sigma}
\begin{document}

\author[A. Nowak]{Adam Nowak}
\address{Instytut Matematyczny\\
Polska Akademia Nauk\\
\'Sniadeckich 8\\
00-956 Warszawa, Poland}
\email{adam.nowak@impan.pl}

\author[L. Roncal]{Luz Roncal}
\address{Departamento de Matem\'aticas y Computaci\'on\\
Universidad de La Rioja\\
26004 Logro\~no, Spain}
\email{luz.roncal@unirioja.es}

\footnotetext{
\emph{2010 Mathematics Subject Classification:} Primary 42C10, 47G40; Secondary 31C15, 26A33.\\
\emph{Key words and phrases:} Jacobi expansion, Jacobi operator, Fourier-Bessel expansion,
		Bessel operator, potential operator, Riesz potential, Bessel potential, 
		negative power, fractional integral,
		potential kernel, Poisson kernel.\\
		The first-named author was supported in part by 
		a grant from the National Science Centre of Poland.
		Research of the second-named author supported by the grant
		MTM2012-36732-C03-02 from the Spanish Government.
}

\title[Potential operators]
	{Potential operators \\associated with Jacobi and Fourier-Bessel expansions} 

\begin{abstract}
We study potential operators (Riesz and Bessel potentials) associated with classical Jacobi and Fourier-Bessel
expansions. We prove sharp estimates for the corresponding potential kernels. Then we characterize
those $1\le p,q\le \infty$, for which the potential operators are of strong type $(p,q)$, of weak
type $(p,q)$ and of restricted weak type $(p,q)$. These results may be thought of as analogues
of the celebrated Hardy-Littlewood-Sobolev fractional integration theorem in the Jacobi and Fourier-Bessel
settings. As an ingredient of our line of reasoning, we also obtain sharp estimates of the Poisson kernel
related to Fourier-Bessel expansions.
\end{abstract}

\maketitle

\section{Introduction}

The classical fractional integral operator (also referred to as the {Riesz potential}) is given by
$$
\boldsymbol{I}_{\s} f(x) = \int_{\mathbb{R}^d} \|x-y\|^{2\s-d} f(y)\, dy, \qquad x \in \mathbb{R}^d;
$$
here $d\ge 1$ and $0< \s < d/2$. The integral defining $\boldsymbol{I}_{\s}$ converges for
$\textrm{a.a.}\;x \in \mathbb{R}^d$ provided that $f\in L^p(\mathbb{R}^d)$ and $\frac{1}p > \frac{2\s}{d}$.
For sufficiently smooth functions $f$, $\boldsymbol{I}_{\s}$ coincides up to a constant factor with
$(-\Delta)^{-\s}$, where $\Delta$ is the standard Laplacian in $\mathbb{R}^d$ and the negative
power is defined by means of the Fourier transform.
A basic result concerning mapping properties of $\boldsymbol{I}_{\s}$ 
is the Hardy-Littlewood-Sobolev theorem, see e.g.~\cite[Chapter V]{Stein}, which says that
$\boldsymbol{I}_{\s}$ is of weak type $(1,\frac{d}{d-2\s})$ and of strong type $(p,q)$ when
$\frac{1}q = \frac{1}p -\frac{2\s}d$ and $\quad p>1$ and $q< \infty$.
One can also check that $\boldsymbol{I}_{\s}$ is of restricted weak type $(\frac{d}{2\s},\infty)$ and that
these mapping properties are sharp in the sense that $\boldsymbol{I}_{\s}$ is not of strong type
$(1,\frac{d}{d-2\s})$, not of weak type $(\frac{d}{2\s},\infty)$ and not of restricted weak type $(p,q)$
when $\frac{1}q \neq \frac{1}p-\frac{2\s}d$ (see \cite[Chapter V]{SW} for the terminology).

Weighted estimates for $\boldsymbol{I}_{\s}$ with power weights were studied in \cite{St-We}, 
and with general weights by several authors, see for example \cite{Sa-Wh} and references therein.
On the other hand, numerous analogues of $\boldsymbol{I}_{\s}$ have been investigated in various settings,
including metric measure spaces, spaces of homogeneous type, orthogonal expansions, etc.,
see e.g.~\cite{AM,Bernardis-Salinas,BoTo,GST,GT,GattoSegovia,K,KS,Mu,MuSt,NoSt1},
and references in these papers.
$L^p-L^q$ estimates for such operators are of interest, for instance, in the study of higher order
Riesz transforms and Sobolev spaces in the above mentioned contexts. 
Recently some $L^p-L^q$ bounds for the potential operator in the context of Jacobi expansions, as well
as vector-valued extensions for that operator, were obtained in \cite{OSRS}.
Another recent result in this spirit can be found in \cite{NoSt2}, where a sharp description of
$L^p-L^q$ mapping properties of the potential operator associated with the harmonic
oscillator (the setting of classical Hermite function expansions) was established.

The aim of this paper is to obtain similar characterizations of $L^p-L^q$ mapping properties
of potential operators in the contexts of classical Jacobi and Fourier-Bessel expansions.
These settings are in principle one dimensional, but have a geometric multi-dimensional background.
More precisely, the Fourier-Bessel framework with half-integer parameters of type $\nu$
is related to analysis in the
Euclidean unit balls of dimensions $d_{\nu}=2\nu+2$, see \cite[Chapter 2, H]{Folland} and \cite{NR1}. 
In the Jacobi setting there are two
parameters of type, $\alpha$ and $\beta$. When they are equal and half-integer,
the Jacobi context is related to analysis
on the Euclidean unit spheres of dimensions $d_{\alpha}= 2\alpha+2 = 2\beta+2 = d_{\beta}$, see
\cite[Section 3]{NS2}. If the half-integer parameters are different, say $\alpha> \beta$,
then the geometric connection is more
complex and involves the unit spheres of dimensions $d_{\alpha}$
and unit discs of dimensions $d_{\beta}$ inside those spheres, see \cite{B}.
The `geometric' dimensions $d_{\nu}$, $d_{\alpha}$ and $d_{\beta}$ manifest in our results, and
the interplay between them and the `physical' dimension $d=1$ is an interesting and important
aspect of the theorems dealing with mapping properties of the potential operators.

The main results of the paper are characterizations of those $1\le p,q \le \infty$, for which
the Jacobi and Fourier-Bessel potential operators are of strong type $(p,q)$, of weak type $(p,q)$
and of restricted weak type $(p,q)$, see Theorems \ref{thm:LpLqJac}, \ref{thm:LpLqJacL},
\ref{thm:LpLqFB}, \ref{thm:LpLqFBL} in Section \ref{sec:prel}. Comparing to the
Hardy-Littlewood-Sobolev theorem, the Jacobi and Fourier-Bessel potential operators possess
better mapping properties. This is, roughly speaking, thanks to the finiteness of the measures involved,
and also due to the fact that spectra of the Jacobi and Fourier-Bessel `Laplacians', being discrete,
are separated from $0$
(assuming in the Jacobi case that the parameters of type satisfy $\alpha+\beta\neq -1$).
The proofs are based on sharp estimates for the corresponding potential kernels, which we also obtain
in this paper, see Section \ref{sec:prel}. The latter depend on sharp estimates for the associated
Poisson kernels, which in the Jacobi case were found recently in \cite{NS2,NSS}, and for the
Fourier-Bessel case are established in this paper (Theorem \ref{thm:FBPest}).

The frameworks considered in this paper can be described in a unified and more general way as follows.
Let $X \subset \mathbb{R}$ be a finite interval equipped with a measure $\mu$.
Let $\{\v_n : n \ge N\}$, $N=0$ or $N=1$, be an orthonormal basis of $L^2(X,d\mu)$ consisting of
eigenfunctions of a second order differential operator $L$ (a `Laplacian'),
$$
L \v_n = \lambda_n \v_n, \qquad n \ge N.
$$
In each of our settings $\lambda_n$ are nonnegative, increasing in $n$,
have multiplicities $1$, and $\lambda_n \simeq n^2$
as $n \to \infty$. Moreover, there is a self-adjoint extension of $L$, still denoted by the same symbol,
whose spectral resolution is given by the $\v_n$ and $\lambda_n$.

Assuming that the bottom eigenvalue is nonzero, we consider the \emph{negative powers}
$$
L^{-\sigma} f = \sum_{n \ge N} \lambda_n^{-\sigma} {\langle f,\v_n\rangle_{\mu}}\, \v_n,
$$
where $\s > 0$. For $f \in L^2(d\mu)$ the above series converges in $L^2(d\mu)$ and defines a bounded
linear operator. Formally, $L^{-\s}$ can be written as an integral operator, which we denote by $I_{\s}$,
\begin{equation} \label{potop}
I_{\s}f(x) = \int_{X} K_{\s}(x,y) f(y)\, d\mu(y),
\end{equation}
where the kernel $K_{\s}(x,y)$ can be expressed by the associated heat kernel
or any kernel subordinated to it. In particular, if
$$
H_t(x,y) = \sum_{n \ge {N}} \exp\big(-t {\lambda_n^{1/2}}\big) \,\v_n(x) \v_n(y)
$$
is the corresponding Poisson kernel (the kernel of the semigroup $\{\exp(-t L^{1/2})\}$), then
\begin{equation} \label{pk}
K_{\s}(x,y) = \frac{1}{\Gamma(2\s)} \int_0^{\infty} H_t(x,y) t^{2\s-1}\, dt.
\end{equation}
The set of all $f$ for which the integral in \eqref{potop} converges for $\textrm{a.a.}\;x \in X$ forms
$\domain I_{\s}$, the \emph{natural domain} of $I_{\s}$.
We call $I_{\s}$ the \emph{potential operator} and $K_{\s}(x,y)$ the \emph{potential kernel}.
In the contexts we study, $K_{\s}(x,y)$ is always well defined by \eqref{pk} for $x \neq y$,
and \eqref{potop} makes sense for a large class of $f$.
Furthermore, by our results and arguments similar to those in the
proof of \cite[Corollary 2.4]{NoSt1}, it can be verified that $L^{-\s}$ and $I_{\s}$ coincide as
operators on $L^2(d\mu)$. The integral representation \eqref{potop} of the potential operator offers
an intrinsic and direct approach to the negative powers of $L$. In particular, it enables us to
describe, in a sharp way, $L^p-L^q$ mapping properties of $I_{\s}$ in all the investigated settings.
More general weighted $L^p-L^q$ results are also possible, but are beyond the scope of this paper.

Some remarks are in order. First of all, note that philosophically it would be more appropriate to define
in our settings $K_{\s}(x,y)$ via the heat kernels, i.e.~the kernels of the semigroups $\{\exp(-tL)\}$.
However, although qualitatively sharp estimates of the Jacobi and Fourier-Bessel
heat kernels are available, see \cite{CKP,NR1,NR2,NS2},
from the analytic point of view of estimating $K_{\s}(x,y)$, it seems more convenient to use $H_t(x,y)$
since no exponential factors are needed to describe its short time behavior. 

Another comment concerns terminology. In the literature devoted to analysis of orthogonal
expansions it often happens that the phrase \emph{fractional integral} refers not only to negative
powers, but also to multiplier operators given either by
$$
f \mapsto \sum_{n \ge 1} n^{-\s} {\langle f, \v_n\rangle_{\mu}}\, \v_n
$$
or by
$$
f \mapsto \sum_{n \ge 0} (n+1)^{-\s} {\langle f, \v_n\rangle_{\mu}}\, \v_n,
$$
see for instance \cite[Chapter III]{MuSt} or comments throughout \cite{NoSt1} and references given there.
These definitions differ in various particular contexts (including those considered by us) from the
negative powers of $L$ defined spectrally.

On the other hand, some mapping properties of the fractional integrals above and the negative powers are
related by means of suitable multiplier theorems, see for instance \cite{DeNapoliDrelichmanDuran,GST,GT,KS}
and the ends of Sections 2--4 in \cite{NoSt1}.
Even more, the negative powers and the fractional integrals can be treated directly by means of
multiplier theorems. Still another possibility of dealing with these operators in some particular settings
is based on transplantation theorems, see \cite[p.\,213]{NoSt1} for some hints on the idea.
All these multiplier and transplantation aspects pertain only $L^p-L^p$ mapping properties and will
not be further discussed here.
For the settings investigated in this paper, the reader can find suitable multiplier and transplantation
theorems for instance in \cite{OSNS,OS1,OS2,L,Mu,NSS}.
A multiplier approach to fractional integrals in a Jacobi framework slightly different from those
considered here can be found in \cite{BaUr1,BaUr2}.

Finally, we note that the results of this paper concerning the analogues of the classical Riesz potentials
in the Jacobi and Fourier-Bessel settings contain implicitly parallel results for counterparts
of the classical Bessel potentials $(\id-\Delta)^{-\sigma}$. More precisely, in each of our frameworks
we may consider the negative powers $(\id + L)^{-\sigma}$, which can be written as integral operators
$$
\widetilde{I}_{\sigma}f(x) = \int_{X} \widetilde{K}_{\sigma}(x,y)f(y)\, d\mu(y).
$$
Notice that $(\id+L)^{-\sigma}$ makes sense spectrally also in cases when $0$ is an eigenvalue of $L$.
Since the heat kernels related to $L$ and $\id+L$ coincide up to the factor $\exp(-t)$, the arguments
proving Theorem \ref{thm:FBPest} show that the corresponding Poisson kernels are comparable,
uniformly in $\t,\v$ and $t$, up to the factor $\exp(-t [(\lambda_N+1)^{1/2}-(\lambda_N)^{1/2}])$.
Thus the reasonings proving Theorems \ref{thm:potker} and \ref{thm:potkerFB} go through revealing that
these results hold with $K_{\sigma}$ replaced by $\widetilde{K}_{\sigma}$ in each case and with the
restriction $\alpha+\beta\neq -1$ in Theorem \ref{thm:potker} released. Consequently, Theorems
\ref{thm:LpLqJac}, \ref{thm:LpLqJacL}, \ref{thm:LpLqFB}, \ref{thm:LpLqFBL} still hold after replacing
$I_{\sigma}$ by $\widetilde{I}_{\sigma}$ in each setting, moreover with the restriction 
$\alpha+\beta \neq -1$ removed in cases of Theorems \ref{thm:LpLqJac} and \ref{thm:LpLqJacL}.

The paper is organized as follows. In Section \ref{sec:prel} we briefly introduce the Jacobi and
Fourier-Bessel settings to be investigated and state the main results
(Theorems \ref{thm:potker}-\ref{thm:LpLqFBL}). The corresponding proofs
are contained in the two succeeding sections. In Section \ref{sec:ker} we show sharp estimates for
all the relevant potential kernels (Theorems \ref{thm:potker} and \ref{thm:potkerFB}),
and also for the Poisson kernel associated with Fourier-Bessel
expansions (Theorem \ref{thm:FBPest}). 
Finally, Section \ref{sec:LpLq} is devoted to proving $L^p-L^q$ mapping properties
of the Jacobi and Fourier-Bessel potential operators (Theorems \ref{thm:LpLqJac}, \ref{thm:LpLqJacL},
\ref{thm:LpLqFB} and \ref{thm:LpLqFBL}).

\textbf{Notation.} Throughout the paper we use a standard notation.
In particular, by $\langle f,g \rangle_{\mu}$ we mean $\int f(x) \overline{g(x)} \, d\mu(x)$
whenever the integral makes sense.
For $1\le p \le \infty$, $p'$ is its adjoint, $1/p+1/p'=1$.
When writing estimates, we will frequently use the notation $X \lesssim Y$
to indicate that $X \le CY$ with a positive constant $C$ independent of significant quantities.
We shall write $X \simeq Y$ when simultaneously $X \lesssim Y$ and $Y \lesssim X$.
For the sake of clarity and reader's convenience, in the Appendix we include a table summarizing
the notation of various objects in the settings considered in this paper.

\section{Preliminaries and statement of results} \label{sec:prel}

We will consider two interrelated settings of orthogonal systems based on Jacobi polynomials. 
Also, we will study two contexts of Fourier-Bessel expansions, which are close (in a sense to
be explained in Section \ref{sec:ker}) to the two Jacobi setting with parameters of type 
$\alpha=\nu$ and $\beta=1/2$. All the four settings have roots in the existing literature.
\subsection{Jacobi trigonometric polynomial setting}
Let $\ab > -1$. The normalized trigonometric Jacobi polynomials are given by
$$
\mathcal{P}_n^{\ab}(\t) = c_n^{\ab} P_n^{\ab}(\cos\t), \qquad \t \in (0,\pi),
$$
where $c_n^{\ab}$ are normalizing constants, and $P_n^{\ab}$, $n\ge 0$, are the classical 
Jacobi polynomials as defined in Szeg\"o's monograph \cite{Sz}; see \cite{NS1,NS2,NSS}.
The system $\{\mathcal{P}_n^{\ab}: n \ge 0\}$ is an orthonormal basis in $L^2(d\mu_{\ab})$,
where $\mu_{\ab}$ is a measure on the interval $(0,\pi)$ defined by
$$
d\mu_{\ab}(\t) = \Big( \sin\frac{\t}2 \Big)^{2\alpha+1} \Big( \cos\frac{\t}2\Big)^{2\beta+1} d\t.
$$
It consists of eigenfunctions of the Jacobi differential operator
$$
\mathcal{J}^{\ab} = - \frac{d^2}{d\theta^2} - \frac{\alpha-\beta+(\alpha+\beta+1)\cos\theta}{\sin \theta}
	\frac{d}{d\theta} + \Big( \frac{\alpha+\beta+1}{2}\Big)^2;
$$
more precisely,
$$
\mathcal{J}^{\ab} \mathcal{P}_n^{\ab} = \Big( n + \frac{\alpha+\beta+1}{2}\Big)^2 \mathcal{P}_n^{\ab}, 
	\qquad n \ge 0.
$$
We shall denote by the same symbol $\mathcal{J}^{\ab}$ the natural self-adjoint extension whose
spectral resolution is given by the $\mathcal{P}_n^{\ab}$, see \cite[Section 2]{NS1} for details.

The integral kernel $\mathcal{H}_t^{\ab}(\t,\v)$ of the Jacobi-Poisson semigroup 
$\{\exp(-t(\mathcal{J}^{\ab})^{1/2})\}$ can be expressed via a complicated hypergeometric function
of two variables, or by means of a double-integral representation, see \cite[Proposition 4.1]{NS1}
and \cite[Section 2]{NSS}. However, none of these expressions provides a direct view of the behavior
of the kernel. The following sharp estimate of $\mathcal{H}_t^{\ab}(\t,\v)$ was obtained recently in
\cite{NS2} for $\ab \ge -1/2$ and in \cite[Theorem 6.1]{NSS} 
for the remaining $\alpha$ and $\beta$.
\begin{thm}[{\cite{NS2,NSS}}]
\label{thm:estPJ}
Let $\ab > -1$. Given any $T>0$, we have
\begin{equation*}
\mathcal{H}^{\ab}_t(\t,\v)  \simeq \begin{cases}
	( t+ \t+\v )^{-2\a-1}
	( t + 2\pi-\t -\v )^{-2\b-1} \frac{t}{t^2+(\t-\v)^2}, & t \le T \\
	\exp\left( -t \frac{|\a+\b+1|}{2} \right), & t > T
	\end{cases},
\end{equation*}
uniformly in $t>0$ and $\t,\v \in (0,\pi)$.
\end{thm}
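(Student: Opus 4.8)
Since the estimate is quoted from \cite{NS2,NSS}, I only indicate the strategy. I would first reduce to $T=1$: on the interval between $\min(T,1)$ and $\max(T,1)$ the quantities $t$, $t+\t+\v$, $t+2\pi-\t-\v$ and $t^2+(\t-\v)^2$ are all bounded above and below by positive constants depending on $T$, so the two branches of the asserted formula are comparable to $1$ there and glue consistently. It thus suffices to prove, uniformly in $\t,\v\in(0,\pi)$, that
\begin{equation*}
\mathcal{H}_t^{\ab}(\t,\v)\simeq(t+\t+\v)^{-2\a-1}(t+2\pi-\t-\v)^{-2\b-1}\,\frac{t}{t^2+(\t-\v)^2}\qquad(0<t\le 1),
\end{equation*}
and that $\mathcal{H}_t^{\ab}(\t,\v)\simeq\exp(-t|\a+\b+1|/2)$ for $t\ge 1$. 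The latter (large-time) part I would settle through the spectral gap. Since $\a+\b+1>-1$ one has $\lambda_n^{1/2}=n+\frac{\a+\b+1}{2}>0$ for $n\ge 1$ while $\lambda_0^{1/2}=\frac{|\a+\b+1|}{2}$, so, separating off the bottom eigenfunction (a nonzero constant),
\begin{equation*}
\mathcal{H}_t^{\ab}(\t,\v)=e^{-t|\a+\b+1|/2}\big(\mathcal{P}_0^{\ab}\big)^2+\sum_{n\ge 1}e^{-t(n+(\a+\b+1)/2)}\,\mathcal{P}_n^{\ab}(\t)\,\mathcal{P}_n^{\ab}(\v),
\end{equation*}
and, using the classical bound $\|\mathcal{P}_n^{\ab}\|_\infty\lesssim n^{\max(\a,\b)+1/2}$ (with exponent $-1/2$ when $\max(\a,\b)<-1/2$), the sum is $\lesssim e^{-t(\a+\b+1)/2}\sum_{n\ge 1}e^{-tn}n^{2\max(\a,\b)+1}\lesssim e^{-t(\a+\b+3)/2}$ for $t\ge 1$, uniformly in $\t,\v$; since $\frac{\a+\b+3}{2}>\frac{|\a+\b+1|}{2}$ this is negligible against the leading term once $t$ exceeds an absolute threshold $T_1$, and the compact range $1\le t\le T_1$ is absorbed by the small-time estimate applied with $T_1$ in place of $1$.

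For the small-time estimate I would subordinate to the Jacobi heat semigroup: writing $G_u^{\ab}$ for the kernel of $\exp(-u\mathcal{J}^{\ab})$,
\begin{equation*}
\mathcal{H}_t^{\ab}(\t,\v)=\frac{t}{2\sqrt{\pi}}\int_0^\infty u^{-3/2}e^{-t^2/(4u)}\,G_u^{\ab}(\t,\v)\,du,
\end{equation*}
and I would insert the known qualitatively sharp Gaussian-type bounds for $G_u^{\ab}$ (see \cite{CKP,NS2}), the portion $u\ge 1$ of the integral being easily controlled. For $u\le 1$ the heat-kernel bound is the Gaussian factor $e^{-c(\t-\v)^2/u}$ multiplied by power-type factors accounting for the degeneration of $\mu_{\ab}$ at the endpoints $0$ and $\pi$. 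Carrying out the $u$-integral---substituting $u=s^2$, pairing the two Gaussians by means of $\int_0^\infty s^{-3}e^{-B/s^2}\,ds\simeq B^{-1}$ with $B=t^2/4+c(\t-\v)^2$, and evaluating the slowly varying power factors at the effective scale $\sqrt u\simeq\sqrt B\simeq t+|\t-\v|$---yields the Cauchy-type factor $t[t^2+(\t-\v)^2]^{-1}$, together with, since $|\t-\v|\le\t+\v$ and $|\t-\v|\le 2\pi-\t-\v$, exactly $(t+\t+\v)^{-2\a-1}$ and $(t+2\pi-\t-\v)^{-2\b-1}$. Making this rigorous, with matching upper and lower bounds, through a case division by the relative sizes of $t$, $|\t-\v|$, $\t+\v$ and $2\pi-\t-\v$, is the technical core. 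Equivalently---the route of \cite{NS1,NS2,NSS}---one may estimate directly the explicit integral representation of $\mathcal{H}_t^{\ab}$ obtained in \cite[Proposition 4.1]{NS1} from the product formula for Jacobi polynomials, by an analogous partition of its domain of integration.

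The step I expect to be the main obstacle is the matching lower bound in the small-time regime: whereas the upper bound follows from crude estimation of the integrand, the lower bound requires, in each configuration of the four scales---in particular when $\t$ and $\v$ both approach one or both endpoints---isolating a subinterval of the $s$-integral (equivalently, a subregion of the representing double integral) on which $e^{-B/s^2}\simeq 1$, the power factors retain their claimed size, and which carries a definite share of the $s^{-3}\,ds$ mass. In the representation-based approach this is also where the extra work for $\ab<-1/2$ enters: there the measures occurring in the representation of \cite{NS1} become signed and more singular, so that sharp two-sided control demands genuine cancellation rather than termwise estimates---precisely the case treated separately in \cite[Theorem 6.1]{NSS}.
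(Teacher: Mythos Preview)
The paper does not prove this theorem at all: it is quoted verbatim from \cite{NS2} (for $\alpha,\beta\ge -1/2$) and \cite[Theorem~6.1]{NSS} (for the remaining parameters), with no argument given here. You recognize this yourself in your opening sentence, so there is no ``paper's own proof'' to compare against.

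As a sketch of how the cited results are obtained, your outline is reasonable. The large-time part via the spectral gap is standard and correct (the key inequality $1+(\alpha+\beta+1)/2>|\alpha+\beta+1|/2$ indeed holds since $\alpha+\beta>-2$). For the short-time part you describe two routes: subordination from sharp heat-kernel bounds, and direct estimation of the double-integral representation of \cite[Proposition~4.1]{NS1}. The second is in fact the route taken in \cite{NS2,NSS}, as you note; your subordination sketch is a viable alternative but would in practice require equally sharp two-sided heat-kernel bounds as input, so it does not really bypass the hard work. You correctly identify the matching lower bound and the signed-measure complication for $\alpha\wedge\beta<-1/2$ as the genuine technical obstacles handled in \cite{NSS}.
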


Assume that $\alpha+\beta \neq -1$, so that the spectrum of $\mathcal{J}^{\ab}$ is separated from $0$.
Given $\s>0$, consider the potential operator
$$
\mathcal{I}^{\ab}_{\s}f(\t) = \int_0^{\pi} \mathcal{K}_{\s}^{\ab}(\t,\v)f(\v)\, d\mu_{\ab}(\v),
$$
where the potential kernel expresses by the Jacobi-Poisson kernel as
\begin{equation*}
\mathcal{K}_{\s}^{\ab}(\t,\v)=\frac{1}{\Gamma(2\s)}\int_0^{\infty}\mathcal{H}_t^{\ab}(\t,\v) t^{2\s-1}\,dt, 
	\qquad \t,\v \in (0,\pi).
\end{equation*}
An upper bound for $\mathcal{K}_{\s}^{\ab}(\t,\v)$ showing explicit dependence on the parameters of type
was obtained recently in \cite[Theorem 1.3]{OSRS},
under the restrictions $\s< 1/2$, $\alpha \ge -1/2$ and $\beta > -1/2$.
 
Here, making use of Theorem \ref{thm:estPJ}, we will prove the following sharp bounds for 
$\mathcal{K}_{\s}^{\ab}(\t,\v)$.
\begin{thm} \label{thm:potker}
Let $\ab > -1$, $\a+\b \neq -1$, and $\s > 0$ be fixed. The estimate
\begin{align*}
\mathcal{K}_{\sigma}^{\ab}(\t,\v) & \simeq 
	1 + \chi_{\{\s = \a+1\}} \log \frac{2\pi}{\t+\v}
	+ \chi_{\{\s=\b+1\}} \log\frac{2\pi}{2\pi - \t-\v} \\
	&\quad  + (\t+\v)^{2\s-2(\a+1)}(2\pi-\t-\v)^{2\s-2(\b+1)}\begin{cases}
   1, & \s>1/2\\
   \log\frac{(\t+\v)(2\pi-\t-\v)}{|\t-\v|}, &\s=1/2\\
    \left(\frac{(\t+\v)(2\pi-\t-\v)}{|\t-\v|}\right)^{1-2\s}, &\s<1/2
    \end{cases}
\end{align*}
holds uniformly in $\t,\v \in (0,\pi)$.
\end{thm}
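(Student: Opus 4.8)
The plan is to estimate the potential kernel directly from its defining integral $\mathcal{K}_{\s}^{\ab}(\t,\v)=\frac{1}{\Gamma(2\s)}\int_0^{\infty}\mathcal{H}_t^{\ab}(\t,\v)\,t^{2\s-1}\,dt$, inserting the two-sided bound of Theorem~\ref{thm:estPJ}. Fix $T>\pi$ and split the integral at $T$. On the tail $(T,\infty)$ the kernel is $\simeq\exp(-t|\a+\b+1|/2)$, which is integrable against $t^{2\s-1}$ precisely because $\a+\b\neq-1$, so this piece contributes a finite positive constant, i.e.\ the summand $\simeq 1$. It then remains to bound, from above and below,
\[
\mathcal{L}:=\int_0^T (t+u)^{-2\a-1}(t+v)^{-2\b-1}\,\frac{t^{2\s}}{t^2+w^2}\,dt,
\]
where throughout I abbreviate $u=\t+\v$, $v=2\pi-\t-\v$, $w=|\t-\v|$ (plain letters, not the macro $\v$); note $u+v=2\pi$ and $0\le w\le\min(u,v)$.

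Both the bound of Theorem~\ref{thm:estPJ} and the right-hand side in the statement are invariant under the reflection $(\a,\b,\t,\v)\mapsto(\b,\a,\pi-\t,\pi-\v)$, which interchanges $u\leftrightarrow v$ and fixes $w$, so I may assume $u\le v$; then $v\in[\pi,2\pi]$, the factor $(t+v)^{-2\b-1}$ is $\simeq1$ on $(0,T)$, and $\mathcal{L}\simeq\int_0^T (t+u)^{-2\a-1}\,t^{2\s}/(t^2+w^2)\,dt$. I would evaluate this by splitting $(0,T)$ into the three ranges $0<t<w$, $w<t<u$ and $u<t<T$ (some possibly empty), on each of which $t^2+w^2$ and $t+u$ each reduce to a single monomial. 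The resulting elementary integrals are of the form $\int t^{2\s}\,dt$ on the first range, $\int t^{2\s-2}\,dt$ on the second, and $\int t^{2\s-2\a-3}\,dt$ on the third; they produce pure powers except at the critical exponents $\s=1/2$ (second range) and $\s=\a+1$ (third range), where logarithms appear. Assembling the contributions, using $uv\simeq u$, $\log v\simeq1$ and $\log(uv/w)\simeq1+\log(u/w)$ valid for $u\le v$, and observing that for $u\le v$ the claimed right-hand side reduces to exactly the expression obtained this way (the parts involving $2\pi-\t-\v$ being $O(1)$ and absorbed into the leading $1$), gives the asserted estimate for $u\le v$; the general case follows by the reflection.

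For the two-sided conclusion I would check that each of the three ranges, and the tail, contributes $\lesssim$ the claimed right-hand side, while conversely the three constituents of that bound are bounded below by disjoint parts of the $t$-integral: the constant $1$ by the tail $(T,\infty)$; the term $\chi_{\{\s=\a+1\}}\log\frac{2\pi}{\t+\v}$ by the range $u<t<T$ (active only when $\s=\a+1$); and the main term $(\t+\v)^{2\s-2(\a+1)}(2\pi-\t-\v)^{2\s-2(\b+1)}[\cdots]$ by the ranges $0<t<w$ and $w<t<u$. Summing these lower bounds over the disjoint ranges yields the matching lower bound for $\mathcal{K}_{\s}^{\ab}$. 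The delicate point, and the main obstacle, is this lower bound near the diagonal: when $w\simeq u$ the middle range $w<t<u$ degenerates and the naive difference $|u^{2\s-1}-w^{2\s-1}|$ badly underestimates, so one must notice that the range $0<t<w$ alone already furnishes $\simeq(\t+\v)^{2\s-2(\a+1)}(2\pi-\t-\v)^{2\s-2(\b+1)}$ while the bracket $[\cdots]$ is $\simeq1$ in that regime, and also use $u^{-2\a-1}w^{2\s-1}\le u^{2\s-2(\a+1)}$ (valid since $w\le u$) to see that this range never overshoots when $\s>1/2$. When $w=0$, i.e.\ $\t=\v$, the range $0<t<w$ is empty, $\mathcal{L}$ diverges for $\s\le1/2$, and the statement is read with its right-hand side equal to $+\infty$ there, so no separate treatment is needed; beyond this diagonal point what remains is the routine bookkeeping of which monomial dominates in each subregion as $\s$ crosses $1/2$ and $\a+1$.
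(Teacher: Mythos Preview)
Your proposal is correct and follows essentially the same route as the paper: reduce by the reflection symmetry to $\t+\v\le\pi$, split off the exponential tail to produce the constant term, and then analyze the remaining integral by partitioning $t$ at $|\t-\v|$ and $\t+\v$, picking up a logarithm exactly at the critical exponents $\s=1/2$ and $\s=\a+1$. The only difference is organizational: the paper packages the sharp two-sided estimate of $\int_T^S t^{\gamma}(t^2+w^2)^{-1}\,dt$ into a standalone technical lemma (Lemma~\ref{lem:intJ}), whose formula carries a prefactor $(S-T)/S$ that automatically handles the near-diagonal degeneration $w\simeq u$ you single out, whereas you treat that case by hand.
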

Notice that locally, when $\t$ and $\v$ stay separated from the boundary of $(0,\pi)$, the kernel
$\mathcal{K}_{\sigma}^{\ab}(\t,\v)$ behaves like the kernels of classical Riesz (when $0 < \sigma < 1/2)$
and Bessel (for all $\sigma > 0$) potentials; see \cite{AS}. A similar comment
concerns potential kernels in all the other contexts considered in this paper.

From the estimate of Theorem \ref{thm:potker}, it can be seen that
$L^p(d\mu_{\ab})\subset \domain\mathcal{I}_{\s}^{\ab}$ for $1\le p \le \infty$.
Furthermore, Theorem \ref{thm:potker} enables a direct analysis of the potential operator 
$\mathcal{I}_{\s}^{\ab}$. The following result gives sharp description of $L^p-L^q$ mapping properties
of $\mathcal{I}_{\s}^{\ab}$, see also Figure \ref{fig1} below.
\begin{thm} \label{thm:LpLqJac}
Let $\ab > -1$, $\alpha+\beta \neq -1$, $\s>0$ and $1\le p,q \le \infty$.
Set 
$$
\delta:= (\alpha+1)\vee (\beta+1) \vee (1/2).
$$
Then $\mathcal{I}_{\s}^{\ab}$ has the following mapping properties with respect to the measure space
$((0,\pi),d\mu_{\ab})$.
\begin{itemize}
\item[(i)] If $\s > \delta$, then $\mathcal{I}_{\s}^{\ab}$ is of strong type $(p,q)$ for all $p$ and $q$.
\item[(ii)] If $\s = \delta$, then $\mathcal{I}_{\s}^{\ab}$ 
	is of strong type $(p,q)$ for $(p,q)\neq (1,\infty)$, and not of restricted weak type $(1,\infty)$.
\item[(iii)] Assume finally that $\s < \delta$.
	Then $\mathcal{I}_{\s}^{\ab}$ is of strong type $(p,q)$ provided that
	$$
		\frac{1}q \ge \frac{1}p -\frac{\s}{\delta} \quad \textrm{and} \quad (p,q) \notin
			\Big\{\Big(1,\frac{\delta}{\delta-\s}\Big), \Big(\frac{\delta}{\s},\infty\Big)\Big\}.
	$$
	Moreover, $\mathcal{I}_{\s}^{\ab}$ is of weak type $\big(1,\frac{\delta}{\delta-\s}\big)$ and of restricted
	weak type $\big(\frac{\delta}{\s},\infty\big)$.
	
	These results are sharp in the sense that $\mathcal{I}_{\s}^{\ab}$ is not of strong type
  $\big(1,\frac{\delta}{\delta-\s}\big)$, not of weak type $\big(\frac{\delta}{\s},\infty\big)$, and
  not of restricted weak type $(p,q)$ when $\frac{1}q < \frac{1}p - \frac{\s}{\delta}$.
\end{itemize}
\end{thm}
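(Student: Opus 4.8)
The plan is to deduce Theorem \ref{thm:LpLqJac} from the sharp kernel bound in Theorem \ref{thm:potker} by treating $\mathcal{I}^{\ab}_\s$ as a positive integral operator against $d\mu_{\ab}$ and reducing everything to a one-parameter family of model operators near the two endpoints of $(0,\pi)$. First I would record the asymptotics of the measure: $d\mu_{\ab}(\v)\simeq \v^{2\a+1}\,d\v$ near $0$ and $\simeq (\pi-\v)^{2\b+1}\,d\v$ near $\pi$, so that the relevant ``dimensions'' are $d_\a=2\a+2$ and $d_\b=2\b+2$, while $\delta=(\a+1)\vee(\b+1)\vee(1/2)$ is exactly the parameter governing the worst endpoint (or, when $\delta=1/2$, the interior Riesz-potential behaviour). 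Then I would split the kernel estimate from Theorem \ref{thm:potker} into a bounded part (the ``$1+$ logarithms'' piece, which after integration against the finite measure $\mu_{\ab}$ contributes an operator bounded on every $L^p\to L^q$), and the genuinely singular part $(\t+\v)^{2\s-d_\a}(2\pi-\t-\v)^{2\s-d_\b}\times(\text{interior factor})$. Using a partition of unity I would localise to: (a) a neighbourhood of $(0,0)$, where the kernel behaves like $(\t+\v)^{2\s-d_\a}\,g(\t,\v)$ with $g$ the classical interior Riesz/Bessel factor; (b) a neighbourhood of $(\pi,\pi)$, symmetric with $\a\leftrightarrow\b$; (c) the ``cross'' regions near $(0,\pi)$ and $(\pi,0)$, where both factors $(\t+\v)^{2\s-d_\a}$ and $(2\pi-\t-\v)^{2\s-d_\b}$ are bounded and only the interior singularity at $\t=\v$ could matter — but there $\t$ and $\v$ are far apart, so the kernel is bounded; and (d) the interior-diagonal region, a classical fractional integral on a space of Lebesgue-type dimension $1$.

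For the positive results (parts (i), (ii), and the strong/weak/restricted-weak claims in (iii)), the key step is a Schur-type / Young-type argument. In the interior region the operator is dominated by a one-dimensional Riesz potential of order $\s$ relative to dimension $d=1$: if $\s>1/2$ it has a bounded kernel (after the finite-measure truncation), hence is $L^p\to L^q$ bounded for all $p,q$; if $\s=1/2$ there is a logarithmic kernel, still giving strong type for all $(p,q)\neq(1,\infty)$ by a standard $L\log L$/duality computation, and a restricted-weak-type failure at $(1,\infty)$; if $\s<1/2$ it is the genuine fractional integral, contributing the constraint $1/q\ge 1/p-\s$ with the usual endpoint behaviour. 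Near the endpoint $0$, the model operator is
$$
T_{\a,\s}f(\t)=\int_0^{1}(\t+\v)^{2\s-d_\a}\,\min\Big\{1,\Big(\tfrac{\t+\v}{|\t-\v|}\Big)^{(1-2\s)_+}\Big\}\,f(\v)\,\v^{d_\a-1}\,d\v,
$$
and one checks by homogeneity (the kernel and the measure are jointly homogeneous of the right degrees) and a Schur test with power weights $\v^{\gamma}$ that $T_{\a,\s}$ is bounded $L^p(\v^{d_\a-1}d\v)\to L^q(\t^{d_\a-1}d\t)$ precisely when $1/q\ge 1/p-\s/(\a+1)$, with the endpoints $(1,\frac{\a+1}{\a+1-\s})$ giving weak type and $(\frac{\a+1}{\s},\infty)$ giving restricted weak type — the classical Hardy--Littlewood--Sobolev pattern with ``dimension'' $d_\a$ and order $2\s$. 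Taking the worst of the three local constraints ($\a+1$, $\b+1$, $1/2$) yields $\delta$ and the stated thresholds; when $\s>\delta$ every local piece has a bounded kernel and the whole operator is universally bounded, giving (i); when $\s=\delta$ exactly one borderline logarithmic factor survives, giving (ii) and its sharpness.

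For the sharpness / negative results I would exhibit explicit testing families. To show $\mathcal{I}^{\ab}_\s$ is not of strong type $(1,\frac{\delta}{\delta-\s})$ and not of weak type $(\frac{\delta}{\s},\infty)$, and not of restricted weak type $(p,q)$ when $1/q<1/p-\s/\delta$: if $\delta=\a+1$ take $f$ to be (approximate) point masses at, or normalized indicators of small balls around, the endpoint $\t=0$, i.e. $f_\epsilon=\chi_{(0,\epsilon)}$, compute $\|f_\epsilon\|_{L^p(d\mu_{\ab})}\simeq \epsilon^{d_\a/p}$, and compute the size of $\mathcal{I}^{\ab}_\s f_\epsilon(\t)$ for $\t\simeq\epsilon$ and for $\t$ of order $1$, extracting the sharp powers of $\epsilon$; letting $\epsilon\to0$ contradicts the forbidden inequalities. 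The interior threshold $\delta=1/2$ (when it dominates) is tested by indicators of small intervals in the interior, reproducing the classical sharpness of Hardy--Littlewood--Sobolev in dimension $1$. For the logarithmic failures at $(1,\infty)$ when $\s=\delta$, one uses that $\mathcal{K}^{\ab}_\s(\cdot,\v)$ fails to be essentially bounded (the $\log$ blows up), so pairing with indicators of shrinking neighbourhoods of the singular point shows $\mathcal{I}^{\ab}_\s$ is not even of restricted weak type $(1,\infty)$.

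I expect the main obstacle to be the bookkeeping in the cross regions and in matching the three competing local behaviours into a single clean statement: one must verify carefully that the interior factor $\big(\frac{(\t+\v)(2\pi-\t-\v)}{|\t-\v|}\big)^{1-2\s}$ (for $\s<1/2$) does not worsen the endpoint estimates — i.e. that near $\t=0$ one genuinely has $|\t-\v|\lesssim \t+\v$ so this factor is $\gtrsim 1$ but is absorbed by the homogeneity of $T_{\a,\s}$ — and, conversely, that in the interior the endpoint factors $(\t+\v)^{2\s-d_\a}(2\pi-\t-\v)^{2\s-d_\b}$ are harmless bounded quantities. Equivalently, the delicate point is the Schur test on $T_{\a,\s}$ with the correct power weights when the interior singularity and the endpoint singularity coalesce; once that is done, gluing the local pieces via the partition of unity and the finiteness of $\mu_{\ab}$ is routine, and interpolation (Marcinkiewicz, together with the restricted-weak-type endpoints) fills in the open region $1/q>1/p-\s/\delta$.
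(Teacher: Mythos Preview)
Your proposal is essentially correct and shares the paper's overall architecture: use the sharp kernel estimate of Theorem~\ref{thm:potker}, break the operator into pieces dominated by model operators, prove the positive results by interpolation from endpoint estimates, and disprove the negative ones with explicit test functions (indicators of shrinking intervals near an endpoint when $\delta>1/2$, or in the interior when $\delta=1/2$). The counterexample scheme you describe matches the paper's almost exactly.

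The main technical difference is in how the decomposition and the key weak-type endpoint are handled. The paper does not localise geometrically via a partition of unity; instead it splits $\mathcal{K}_\s^{\ab}$ \emph{additively} into six global pieces $\mathcal{K}_1,\dots,\mathcal{K}_6$ coming directly from the terms in Theorem~\ref{thm:potker}, and analyses the associated operators $\mathcal{T}_1,\dots,\mathcal{T}_6$ one by one. For the hardest piece $\mathcal{T}_6$ (the case $\s<1/2$, where the interior singularity $|\t-\v|^{2\s-1}$ is present), the paper observes that $\mathcal{K}_6(\t,\v)\simeq |\t-\v|^{2\s}/\mu_{\ab}(B(\t,|\t-\v|))$, i.e.\ $\mathcal{T}_6$ is comparable to the homogeneous-space fractional integral $U_{2\s}$ of~\eqref{Uxi}, and then invokes Lemma~\ref{lem:Ixi} (a Hedberg inequality plus weak $(1,1)$ for the maximal function) to obtain the weak type $(1,\tfrac{\delta}{\delta-\s})$. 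Your Schur test with power weights will give the open strong-type region for $p>1$, $q<\infty$, but not the weak-type endpoint $(1,\tfrac{\delta}{\delta-\s})$ when $\s<1/2$: that really needs a Hedberg/maximal-function argument (or an equivalent), since the interior singularity prevents a pointwise bound of the form $T_{\a,\s}f(\t)\lesssim g(\t)\|f\|_1$. This is a fillable gap, but you should flag it explicitly rather than fold it into ``the classical Hardy--Littlewood--Sobolev pattern.'' (By contrast, when $\s>1/2$ the paper's $\mathcal{T}_4$ does admit such a pointwise bound and the weak type follows by a direct level-set computation, closer in spirit to what you wrote.) A minor slip: your interior constraint should read $1/q\ge 1/p-2\s$, not $1/q\ge 1/p-\s$; this is consistent with $\s/\delta$ when $\delta=1/2$.
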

\begin{figure} [ht]
\includegraphics[height=0.5\textwidth]{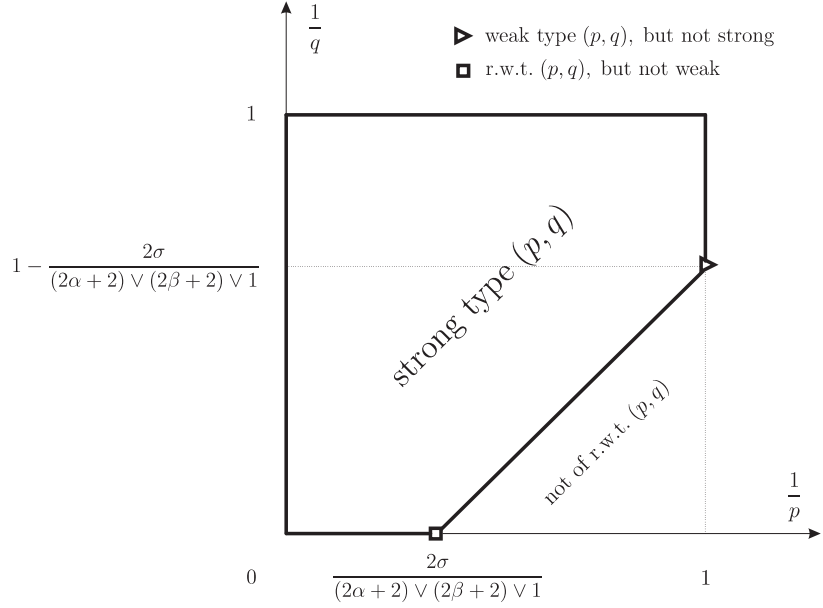}
\caption{Mapping properties of $\mathcal{I}_{\s}^{\ab}$ for $0 < \s < (\alpha+1)\vee (\beta+1) \vee (1/2)$.}
 \label{fig1}
\end{figure}

\subsection{Jacobi trigonometric `function' setting}
This Jacobi setting is derived from the previous one by modifying the Jacobi trigonometric polynomials
so as to make the resulting system orthonormal with respect to Lebesgue measure $d\t$ in $(0,\pi)$.
Thus we consider the functions
\begin{equation} \label{Jtf}
\phi_n^{\ab}(\theta) = \Big( \sin\frac{\theta}2 \Big)^{\alpha+1/2} \Big( \cos\frac{\theta}2\Big)^{\beta+1/2}
	\mathcal{P}_n^{\ab}(\theta), \qquad n \ge 0.
\end{equation}
Then the system $\{\phi_n^{\ab} : n \ge 0\}$ is an orthonormal basis in $L^2(d\theta)$.
The associated differential operator is 
$$
\mathbb{J}^{\ab} = -\frac{d^2}{d\theta^2} + \frac{(\alpha-1/2)(\alpha+1/2)}{4\sin^2\frac{\theta}2}
	+ \frac{(\beta-1/2)(\beta+1/2)}{4\cos^2\frac{\theta}2}
$$
and we have, see \cite[Section 2]{NS2},
$$
\mathbb{J}^{\ab} \phi_n^{\ab} = \Big( n + \frac{\alpha+\beta+1}{2}\Big)^2 \phi_n^{\ab}, \qquad n \ge 0.
$$

The Jacobi-Poisson semigroup $\{\exp(-t(\mathbb{J}^{\ab})^{1/2})\}$, generated by means of the square
root of the natural 
self-adjoint extension of $\mathbb{J}^{\ab}$ in this context, has an integral representation.
The associated integral kernel $\mathbb{H}_t^{\ab}(\t,\v)$ is linked to $\mathcal{H}_t^{\ab}(\t,\v)$
by, see \cite[Section 2]{NS2},
\begin{equation} \label{PPrel}
\mathbb{H}_t^{\ab}(\theta,\varphi) = 
	\Big( \sin\frac{\theta}2 \sin\frac{\varphi}2 \Big)^{\alpha+1/2} 
	\Big( \cos\frac{\theta}2 \cos\frac{\varphi}2 \Big)^{\beta+1/2}
	\mathcal{H}_t^{\ab}(\theta,\varphi).
\end{equation}
Thus Theorem \ref{thm:estPJ} delivers also sharp estimates for $\mathbb{H}_t^{\ab}(\t,\v)$.

Let $\s>0$ and assume that $\alpha+\beta\neq -1$, so that the negative powers $(\mathbb{J}^{\ab})^{-\s}$ are
well defined in $L^2(d\t)$. Consider the potential operator
$$
\mathbb{I}_{\s}^{\ab}f(\t) = \int_0^{\pi} \mathbb{K}_{\s}^{\ab}(\t,\v) f(\v)\, d\v,
$$
where
\begin{align} \nonumber
\mathbb{K}_{\s}^{\ab}(\t,\v) & = 
	\Big( \sin\frac{\theta}2 \sin\frac{\varphi}2 \Big)^{\alpha+1/2} 
	\Big( \cos\frac{\theta}2 \cos\frac{\varphi}2 \Big)^{\beta+1/2}
		\mathcal{K}_{\s}^{\ab}(\t,\v) \\ & \simeq (\t \v)^{\alpha+1/2}\big((\pi-\t)(\pi-\v)\big)^{\beta+1/2} 
		\,\mathcal{K}_{\s}^{\ab}(\t,\v). \label{Kleb}
\end{align}
Clearly, \eqref{Kleb} combined with Theorem \ref{thm:potker} leads to sharp estimates of 
$\mathbb{K}_{\s}^{\ab}(\t,\v)$. Using them it is not hard to see that the natural domain of
$\mathbb{I}_{\s}^{\ab}$ contains all $L^p(d\t)$ spaces, $1\le p \le \infty$, in case $\ab \ge -1/2$.
If $\alpha \wedge \beta < -1/2$, then $L^p(d\t) \subset \domain \mathbb{I}_{\s}^{\ab}$ 
provided that $\frac{1}p < \frac{3}{2}+(\alpha \wedge \beta)$.

The following result gives a complete and sharp description of $L^p-L^q$ 
mapping properties of $\mathbb{I}_{\s}^{\ab}$, see also Figures \ref{fig2}-\ref{fig4} below. 
\begin{thm} \label{thm:LpLqJacL}
Let $\ab > -1$, $\alpha+\beta \neq -1$, $\s>0$ and $1\le p,q \le \infty$. Set
$$
\kappa:=\Big(\alpha+\frac{1}{2}\Big) \wedge \Big(\beta + \frac{1}2\Big).
$$
Then $\mathbb{I}_{\s}^{\ab}$ has the following mapping properties with respect to the measure space
$((0,\pi),d\t)$.
\begin{itemize}
\item[(a)] Assume that $\kappa \ge 0$.
	\begin{itemize}
		\item[(a1)] If $\s > 1/2$, then $\mathbb{I}_{\s}^{\ab}$ is of strong type $(p,q)$
			for all $p$ and $q$.
		\item[(a2)] If $\s=1/2$, 
				then $\mathbb{I}_{\s}^{\ab}$ is of strong type
			$(p,q)\neq (1,\infty)$.
		\item[(a3)] If $\s < 1/2$, then $\mathbb{I}_{\s}^{\ab}$ is of strong type $(p,q)$ provided that
			$$
				\frac{1}{q} \ge \frac{1}p - 2\s \quad \textrm{and} \quad (p,q) \notin
				\Big\{\Big(1,\frac{1}{1-2\s}\Big), \Big(\frac{1}{2\s},\infty\Big)\Big\}.
			$$
			Moreover, $\mathbb{I}_{\s}^{\ab}$ is of weak type $(1,\frac{1}{1-2\s})$ and of restricted weak type
			$(\frac{1}{2\s},\infty)$.
	\end{itemize}
\item[(b)] Assume now that $\kappa < 0$.
	\begin{itemize}
		\item[(b1)] If $\s > \kappa+1/2$, then $\mathbb{I}_{\s}^{\ab}$ is of strong type $(p,q)$ provided that
			$\frac{1}p < 1+\kappa$ and $\frac{1}q > -\kappa$. Furthermore, $\mathbb{I}_{\s}^{\ab}$ is of weak
			type $(p,\frac{1}{-\kappa})$ for $\frac{1}p < {1+\kappa}$ and of restricted weak type
			$(\frac{1}{1+\kappa},q)$ for $\frac{1}q \ge {-\kappa}$.
		\item[(b2)] If $\s=\kappa + 1/2$, then $\mathbb{I}_{\s}^{\ab}$ has the mapping properties from (b1),
			except for that it is not of restricted weak type $(\frac{1}{1+\kappa},\frac{1}{-\kappa})$.
		\item[(b3)] If $\s < \kappa +1/2$, then $\mathbb{I}_{\s}^{\ab}$ is of strong type $(p,q)$ when
			$$
				\frac{1}p < 1+\kappa \quad \textrm{and} \quad \frac{1}q > -\kappa 
				\quad \textrm{and} \quad \frac{1}q \ge \frac{1}p -2\s.
			$$
			Further, $\mathbb{I}_{\s}^{\ab}$ is of weak type $(p,\frac{1}{-\kappa})$ for 
			$\frac{1}p < {2\s-\kappa}$ and of restricted weak type 
			$(\frac{1}{2\s-\kappa},\frac{1}{-\kappa})$ and
			$(\frac{1}{1+\kappa},q)$ for $\frac{1}q \ge {1+\kappa-2\s}$.
	\end{itemize}
\end{itemize}
All the results in parts (a) and (b) are sharp in the sense that for no pair $(p,q)$ weak type can be
replaced by strong type, and similarly if restricted weak type $(p,q)$ is claimed, then for no 
such $(p,q)$ it can be replaced by weak type. For $(p,q)$ not covered by (a) and (b), 
$\mathbb{I}_{\s}^{\ab}$ is not of restricted weak type $(p,q)$.
\end{thm}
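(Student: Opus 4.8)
The proof is driven by the pointwise estimate for $\mathbb{K}_{\s}^{\ab}(\t,\v)$ obtained by combining \eqref{Kleb} with Theorem \ref{thm:potker}, together with the elementary fact that this kernel is nonnegative and symmetric, so that $\mathbb{I}_{\s}^{\ab}$ is formally self-adjoint on $L^2(d\t)$ and strong (resp.\ weak, restricted weak) type $(p,q)$ is equivalent to the same property at $(q',p')$; this halves the work and explains the symmetric roles of the conditions $\tfrac1p<1+\kappa$ and $\tfrac1q>-\kappa$. The first step is to record the resulting bound
\[
\mathbb{K}_{\s}^{\ab}(\t,\v)\simeq(\t\v)^{\a+1/2}\big((\pi-\t)(\pi-\v)\big)^{\b+1/2}\,\mathcal{K}_{\s}^{\ab}(\t,\v)
\]
and to decompose $(0,\pi)^2$ into a neighborhood of the diagonal, neighborhoods of the corners $(0,0)$ and $(\pi,\pi)$, and a remaining region on which $\t+\v$, $2\pi-\t-\v$ and $|\t-\v|$ are all $\simeq1$, so that $\mathbb{K}_{\s}^{\ab}\simeq1$ there (the corners $(0,\pi)$ and $(\pi,0)$ are harmless, the two arguments being far apart). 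On the diagonal region the kernel behaves like that of the one-dimensional Riesz potential of order $2\s$: $\simeq1$ for $\s>1/2$, logarithmic for $\s=1/2$, and $\simeq|\t-\v|^{2\s-1}$ for $\s<1/2$; near $(0,0)$ it is comparable to $(\t\wedge\v)^{\a+1/2}(\t\vee\v)^{-(\a+1/2)}$ times the local Riesz/Bessel factor, and symmetrically near $(\pi,\pi)$ with exponent $\b+1/2$.

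For the positive statements the three types of region are handled separately. The diagonal piece is treated by the one-dimensional Hardy--Littlewood--Sobolev theorem on a bounded interval (using $|t|^{2\s-1}\lesssim|t|^{2\s'-1}$ for $\s'\le\s$ to pass from the critical line to the region above it, plus Lorentz-space refinements at the endpoints): this gives strong type on $\{\tfrac1q\ge\tfrac1p-2\s\}$ away from the two critical endpoints, weak type at $(1,\tfrac1{1-2\s})$, restricted weak type at $(\tfrac1{2\s},\infty)$, and, when $\s\ge1/2$, boundedness for all $(p,q)$ with the sole logarithmic defect at $(1,\infty)$ when $\s=1/2$. Each corner piece is, after splitting into $\v<\t/2$, $\t/2\le\v\le2\t$ and $\v>2\t$, a weighted fractional integral, and a Stein--Weiss type inequality (or a direct Schur/Hölder estimate on the separated off-diagonal parts, where the kernel is dominated by $\t^{\pm(\a+1/2)}\v^{\mp(\a+1/2)}$ times a Riesz factor) produces exactly the conditions $\tfrac1p<1+\kappa$, $\tfrac1q>-\kappa$ and $\tfrac1q\ge\tfrac1p-2\s$, the borderlines $\tfrac1p=1+\kappa$, $\tfrac1q=-\kappa$ and $\tfrac1q=\tfrac1p-2\s$ yielding weak or restricted weak type. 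If $\kappa\ge0$ the corner factor is bounded and only the diagonal constraint survives, which gives part (a). If $\kappa<0$, one notes that when $\s>\kappa+1/2$ the pair $\tfrac1p<1+\kappa$, $\tfrac1q>-\kappa$ already forces $\tfrac1q>\tfrac1p-2\s$, so the Riesz constraint is absorbed (part (b1)); when $\s=\kappa+1/2$ the three constraints meet at the single point $(\tfrac1{1+\kappa},\tfrac1{-\kappa})$, which accounts for the lone failure in (b2); and when $\s<\kappa+1/2$ the Riesz line genuinely cuts the box $\{\tfrac1p<1+\kappa\}\cap\{\tfrac1q>-\kappa\}$, producing the more intricate statement (b3). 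Collecting the contributions and interpolating (Marcinkiewicz, together with interpolation with change of power weights on the relevant segments) fills the interiors of all regions and fixes the precise endpoint behavior.

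The sharpness part consists of lower bounds from explicit test functions. To defeat every $(p,q)$ with $\tfrac1q<\tfrac1p-2\s$, and to show that strong type fails at the critical pairs $(1,\tfrac1{1-2\s})$ and $(\tfrac1{2\s},\infty)$, one uses $f=\chi_{(\t_0-\epsilon,\t_0+\epsilon)}$ — or a power $f(\v)=|\v-\t_0|^{-a}$ restricted to a small interval around an interior point $\t_0$ — exactly as for the classical Riesz potential. To produce the corner obstructions $\tfrac1p\ge1+\kappa$ and $\tfrac1q\le-\kappa$ (for $\tfrac1p>1+\kappa$ one even has $L^p(d\t)\not\subset\domain\mathbb{I}_{\s}^{\ab}$) and the exact weak-versus-restricted-weak dichotomy along $\tfrac1p=1+\kappa$ and $\tfrac1q=-\kappa$, one uses functions concentrated at the corner $0$ and, via the reflection $\t\mapsto\pi-\t$, at $\pi$: slowly varying powers $f(\v)=\v^{-(1+\kappa)}\big(\log\tfrac2\v\big)^{-b}\chi_{(0,1/2)}(\v)$ with the exponent $b$ tuned to lie just inside or just outside $L^p$, and the shrinking indicators $f=\chi_{(0,\epsilon)}$, for which one compares $\|\mathbb{I}_{\s}^{\ab}\chi_{(0,\epsilon)}\|_{L^{q,\infty}}$ with $\epsilon^{1/p}=|(0,\epsilon)|^{1/p}$ as $\epsilon\to0^+$; the leftover logarithmic factor present when $\s=\kappa+1/2$ (resp.\ $\s=1/2$ if $\kappa\ge0$) gives $\|\mathbb{I}_{\s}^{\ab}\chi_{(0,\epsilon)}\|_\infty\simeq\epsilon\log\tfrac1\epsilon$, which rules out restricted weak type at the relevant corner pair. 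Self-adjointness transfers each failure to the dual pair $(q',p')$.

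The main obstacle is the corner analysis, where the diagonal (one-dimensional Riesz) singularity and the boundary power singularity coming from the ``geometric dimension'' genuinely interact: extracting sharp two-sided bounds there requires the three-region splitting above together with careful control of borderline logarithmic and Lorentz-space behavior at each of the lines $\tfrac1p=1+\kappa$, $\tfrac1q=-\kappa$, $\tfrac1q=\tfrac1p-2\s$ and at their mutual intersection points, and then the combinatorial task of assembling the resulting dozen or so sub-cases into the compact form of (a) and (b). Checking the two optimality assertions — that weak type cannot be strengthened to strong, and restricted weak type cannot be strengthened to weak, at exactly the claimed pairs — is the most delicate step, and it is there that the upper bounds and the test-function lower bounds must be made to match.
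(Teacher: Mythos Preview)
Your overall strategy is sound and closely parallels the paper's argument: both proofs start from the sharp kernel estimate \eqref{Kleb} combined with Theorem~\ref{thm:potker}, isolate a ``pure Riesz'' piece $|\t-\v|^{2\s-1}$ handled by the classical Hardy--Littlewood--Sobolev theorem on a bounded interval, and treat the endpoint behavior separately with the three-region splitting $\v<\t/2$, $\t/2\le\v\le 2\t$, $\v>2\t$; the test functions you propose for the sharpness (shrinking indicators $\chi_{(0,\varepsilon)}$, powers with logarithmic corrections $\v^{-(1+\kappa)}(\log\tfrac{2}{\v})^{-b}$) are exactly those used in the paper. The paper organizes the computation by decomposing the \emph{kernel} into six additive pieces $\mathbb{K}_1,\dots,\mathbb{K}_6$ (constant, two logarithms, and the three $\s\gtreqless 1/2$ cases) rather than decomposing the \emph{domain} as you do, and handles the corner pieces by direct H\"older/Schur estimates rather than invoking a Stein--Weiss inequality; your geometric decomposition is cleaner conceptually, while the paper's is better adapted to tracking the many borderline cases.

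There is one genuine slip you should correct. Your claim that ``weak type $(p,q)$ is equivalent to the same property at $(q',p')$'' is false: self-adjointness turns $L^p\to L^{q,\infty}$ into $L^{q',1}\to L^{p'}$, which is restricted strong type $(q',p')$, not weak type $(q',p')$. The paper is careful about this (see the preparatory facts~(C) and~(D) in Section~\ref{sec:LpLq}), and in particular proves the weak type $(p,\tfrac{1}{-\kappa})$ assertions and the failure of weak type along $\tfrac1p=1+\kappa$ \emph{directly}, not by duality. This matters precisely at the places where you say duality halves the work: the positive weak-type claims on the segment $\tfrac1q=-\kappa$ and the negative weak-type claims on the segment $\tfrac1p=1+\kappa$ do not follow from one another, and each must be checked with an explicit estimate or counterexample. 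A smaller point: the corners $(0,\pi)$ and $(\pi,0)$ are not quite ``harmless'' when $\kappa<0$, since the kernel blows up there like $\t^{\alpha+1/2}(\pi-\v)^{\beta+1/2}$; the resulting piece is rank one and easy, but it does contribute to the constraints $\tfrac1p<1+\kappa$, $\tfrac1q>-\kappa$ and should be recorded rather than dismissed.
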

\begin{figure}
\includegraphics[height=0.5\textwidth]{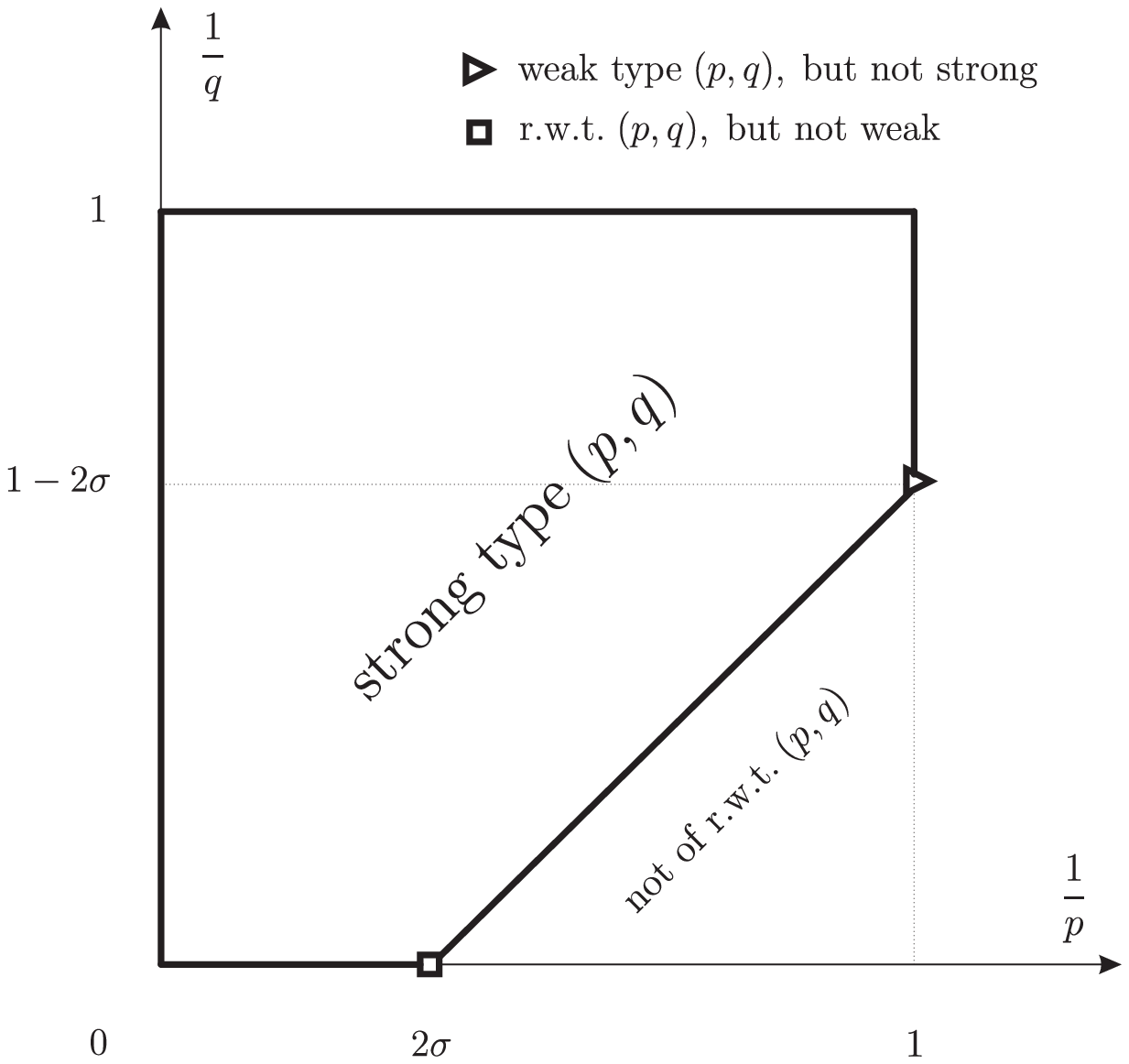}
\caption{Mapping properties of $\mathbb{I}_{\s}^{\ab}$ for $\ab \ge -1/2$
	and $0 < \s < 1/2$.}
 \label{fig2}
\end{figure}
\begin{figure}[ht]
\includegraphics[height=0.5\textwidth]{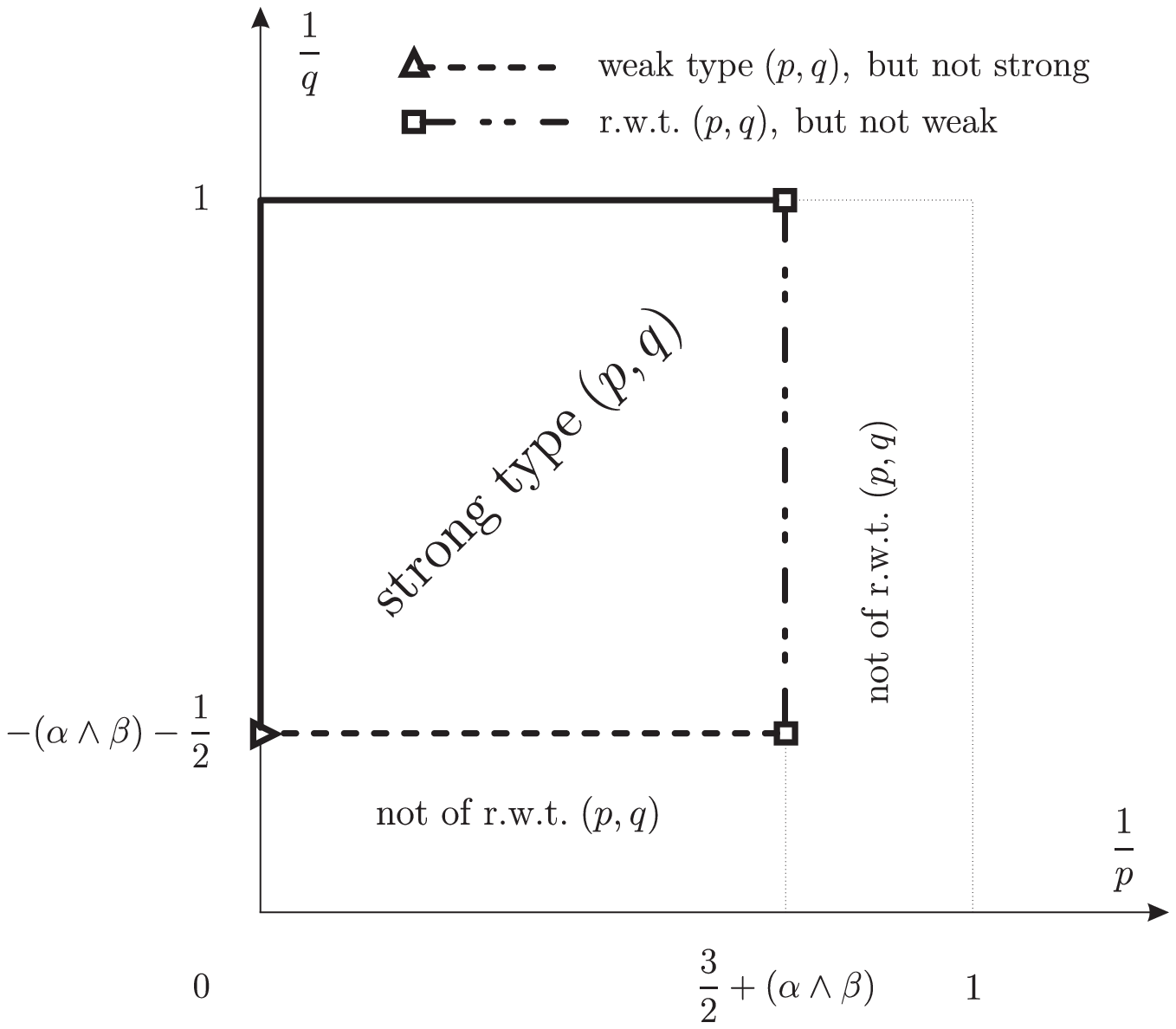}
\caption{Mapping properties of $\mathbb{I}_{\s}^{\ab}$ when $\alpha \wedge \beta < -1/2$
	and $\s > (\alpha \wedge \beta) +1$.}
 \label{fig3}
\end{figure}
\begin{figure}
\includegraphics[height=0.5\textwidth]{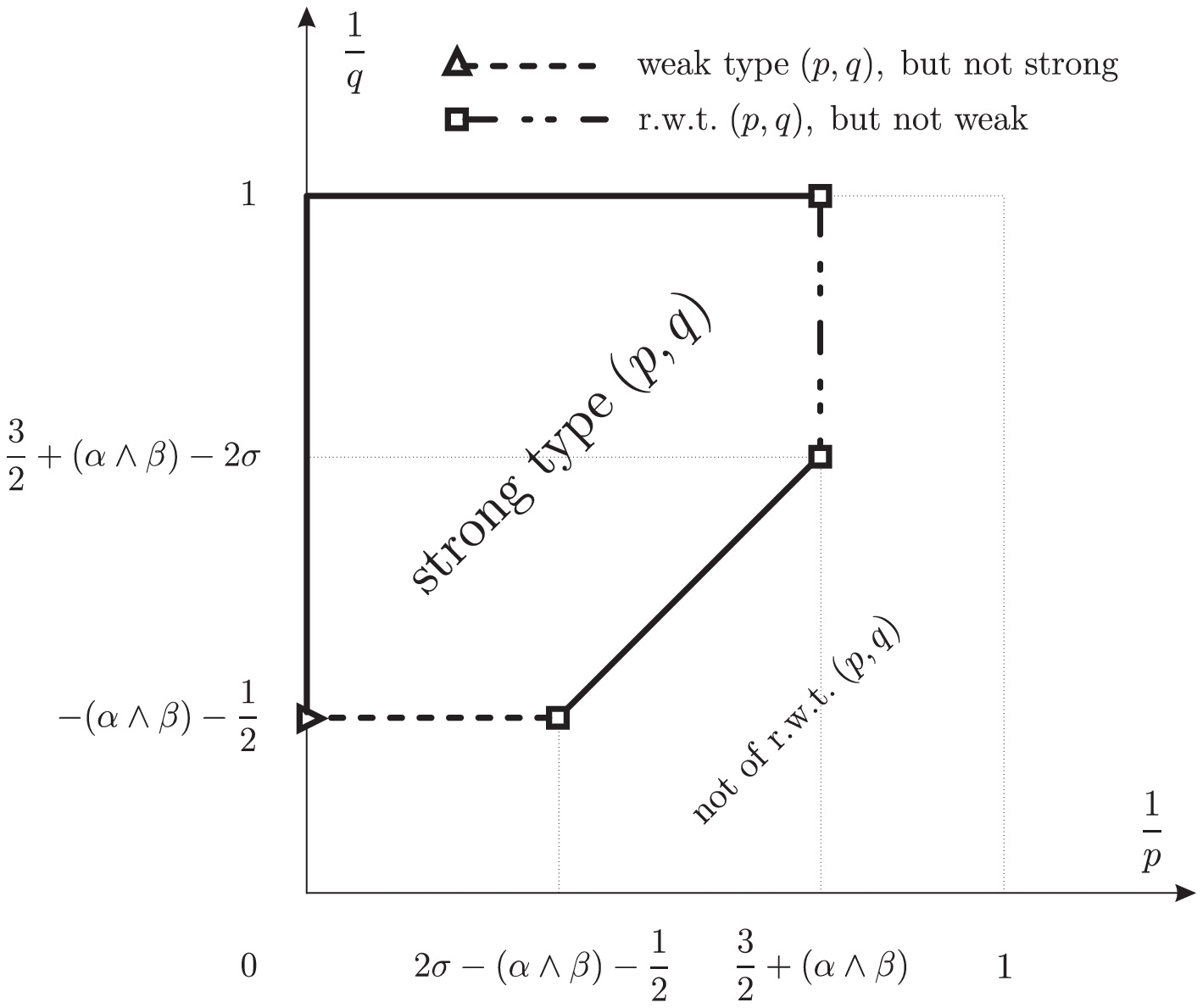}
\caption{Mapping properties of $\mathbb{I}_{\s}^{\ab}$ when $\alpha \wedge \beta < -1/2$
	and $\s < (\alpha \wedge \beta) +1$.}
 \label{fig4}
\end{figure}

\subsection{Natural measure Fourier-Bessel setting}
Let $J_{\nu}$ denote the Bessel function of the first kind and order $\nu > -1$, and let
$\{s_{n,\nu} : n \ge 1\}$ be the sequence of successive positive zeros of $J_{\nu}$, see \cite{Wat}
for the related theory. For $\nu>-1$, define
$$
\phi_n^{\nu}(x) = c_n^{\nu}\, x^{-\nu} J_{\nu}(s_{n,\nu}x),
	\qquad n \ge 1, \quad x \in (0,1),
$$
where $c_n^{\nu}$ are normalizing constants. The Fourier-Bessel system
$\{\phi_n^{\nu} : n \ge 1\}$ is an orthonormal basis in $L^2(d\mu_{\nu})$, where
$d\mu_{\nu}$ is a power density measure in the interval $(0,1)$ given by
$$
d\mu_{\nu}(x) = x^{2\nu+1} \, dx.
$$
Each $\phi_n^{\nu}$ is an eigenfunction of the Bessel operator
$$
\mathcal{L}^{\nu} = - \frac{d^2}{dx^2} - \frac{2\nu+1}x \, \frac{d}{dx},
$$
and we have
$$
\mathcal{L}^{\nu} \phi_n^{\nu} = s_{n,\nu}^2\, \phi_n^{\nu}, \qquad n \ge 1.
$$
We denote by the same symbol $\mathcal{L}^{\nu}$ the natural self-adjoint extension of
the Bessel operator in this context, see \cite{NR1,NR2}.

The integral kernel $\mathcal{H}_t^{\nu}(x,y)$ of the semigroup $\{\exp(-t(\mathcal{L}^{\nu})^{1/2})\}$
was investigated in \cite{NR1}. In particular, in \cite[Theorem 3.4]{NR1} sharp estimates of
$\mathcal{H}_t^{\nu}(x,y)$ were established, but only for a discrete set of half-integer parameters 
$\nu$. Here we prove that these sharp bounds hold in fact for all $\nu>-1$.
\begin{thm} \label{thm:FBPest}
Let $\nu > -1$. Given any $T>0$, we have
\begin{equation*}
\mathcal{H}^{\nu}_t(x,y)  \simeq (1-x)(1-y) \begin{cases}
	( t+ x+y )^{-2\nu-1}
	( t + 2-x -y )^{-2} \frac{t}{t^2+(x-y)^2}, & t \le T \\
	\exp\left( -t s_{1,\nu} \right), & t > T
	\end{cases},
\end{equation*}
uniformly in $t>0$ and $x,y \in (0,1)$.
\end{thm}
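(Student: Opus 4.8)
The plan is to reduce the estimate for the Fourier-Bessel Poisson kernel $\mathcal{H}^\nu_t$ to the already known sharp estimate for the Jacobi-Poisson kernel $\mathcal{H}^{\ab}_t$ of Theorem~\ref{thm:estPJ} with the parameters $\alpha=\nu$ and $\beta=1/2$, exploiting the structural closeness between the Bessel and Jacobi settings. Concretely, one has the classical relation between Bessel functions and Jacobi polynomials (Mehler-Heine type asymptotics, together with the precise identification used in \cite{NR1}): as $\alpha=\nu$ is fixed and $\beta=1/2$, suitably rescaled Jacobi polynomials $\mathcal{P}_n^{\nu,1/2}$ converge to the Bessel functions $\phi_n^\nu$, and the eigenvalues $(n+\frac{\nu+3/4}{2}\cdot 2)^2$ match $s_{n,\nu}^2$ asymptotically. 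The first step is therefore to make this comparison quantitative at the level of the two kernels: write $\mathcal{H}^\nu_t(x,y)$ and the Jacobi kernel $\mathcal{H}^{\nu,1/2}_t(\theta,\varphi)$ under the change of variables $x=\sin(\theta/2)/\sin(\pi/2)$-type scaling (so that $x\sim \theta$ near $0$ and $1-x\sim (\pi-\theta)^2$ near the other endpoint), and show that the ratio of the two, after removing the obvious Jacobian/weight factors, is bounded above and below uniformly in $t$ and the space variables. This is precisely the kind of transference already carried out in \cite{NR1} for half-integer $\nu$; the point here is to observe it holds for all $\nu>-1$.

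The second step is the bookkeeping of the weight factors. The estimate of Theorem~\ref{thm:estPJ} for $\mathcal{H}^{\nu,1/2}_t(\theta,\varphi)$ reads $(t+\theta+\varphi)^{-2\nu-1}(t+2\pi-\theta-\varphi)^{-2}\frac{t}{t^2+(\theta-\varphi)^2}$ for $t\le T$; under the scaling $\theta\simeq x$ near $0$ this already reproduces the factor $(t+x+y)^{-2\nu-1}$, and near $\theta=\pi$ one uses $\pi-\theta\simeq\sqrt{1-x}$ so that $2\pi-\theta-\varphi\simeq \sqrt{1-x}+\sqrt{1-y}$ — hence the factor $(t+2-x-y)^{-2}$ combined with the prefactor $(1-x)(1-y)$ has to be matched against $(t+\sqrt{1-x}+\sqrt{1-y})^{-2}$ times the Jacobian of the substitution. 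One checks the elementary inequality $(1-x)(1-y)(t+2-x-y)^{-2}\simeq (\sqrt{1-x}\,\sqrt{1-y})\cdot(t+\sqrt{1-x}+\sqrt{1-y})^{-2}$ is consistent with the endpoint behaviour of $\phi_n^\nu$ and of the measure $d\mu_\nu$ versus $d\mu_{\nu,1/2}$; the discrepancy factors $x^{\nu+1/2}$, etc., are exactly the ones appearing in \eqref{PPrel}. The part $\frac{t}{t^2+(\theta-\varphi)^2}\simeq \frac{t}{t^2+(x-y)^2}$ is immediate since the scaling is bi-Lipschitz away from the endpoints and one handles the endpoint regions separately. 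For $t>T$ one simply notes $\exp(-t\,|\nu+3/2|/2)\simeq\exp(-ts_{1,\nu})$ up to adjusting $T$, using that both are comparable to $\exp(-ct)$ for the respective bottom spectral data; more precisely, since $s_{1,\nu}$ need not equal $(\nu+3/2)/2$, one absorbs the ratio into the implicit constants on a fixed interval $t\in(T,2T)$ and uses the semigroup/submultiplicativity to propagate to all $t>T$ (this is the same mechanism noted in the introduction for $\widetilde{K}_\sigma$).

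I expect the main obstacle to be making the transference between Jacobi and Bessel \emph{rigorous and uniform in $t$} for \emph{all} $\nu>-1$ rather than the half-integer subset treated in \cite{NR1}. For half-integers one has an explicit closed form tying the two kernels; in general one must instead argue either (i) by an oscillatory-integral / series comparison, splitting the defining series $\sum_n e^{-t s_{n,\nu}}\phi_n^\nu(x)\phi_n^\nu(y)$ into the sum over $n$ with $n\lesssim 1/t$ (where Bessel function asymptotics give uniform control) and the tail (geometrically decaying), matching each piece to the corresponding Jacobi piece; or (ii) by invoking the sharp heat-kernel bounds of \cite{NR1,NR2} for all $\nu$ and subordinating via the Poisson-heat formula analogous to \eqref{pk}, then checking the resulting integral reproduces the claimed profile. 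Route (ii) is cleaner: one writes $\mathcal{H}^\nu_t = \frac{t}{2\sqrt\pi}\int_0^\infty e^{-t^2/(4u)} u^{-3/2} G^\nu_u(x,y)\,du$ with $G^\nu_u$ the Bessel heat kernel, inserts the known two-sided estimate for $G^\nu_u$, and performs the (routine but delicate) $u$-integration, carefully tracking the three regimes $t\lesssim x+y$, intermediate, and $t\gtrsim 1$. The delicate point there is the interplay of the Gaussian-type factor in $G^\nu_u$ near the endpoint $x,y\to 1$ with the subordination weight, which is exactly what produces the $(1-x)(1-y)$ prefactor and the $(t+2-x-y)^{-2}$ behaviour; getting both the upper and the lower bound with matching constants is the crux.
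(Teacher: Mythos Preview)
Your Route (ii) is the right idea and is essentially what the paper does, but you are missing the key shortcut that makes it work cleanly. Rather than inserting the sharp heat-kernel bounds for $\mathbb{G}^{\nu}_u$ and then carrying out the ``routine but delicate'' $u$-integration by hand, the paper uses the fact (already established in \cite[Remark 3.3]{NR2}) that the Lebesgue-measure Fourier--Bessel heat kernel and the scaled Lebesgue-measure Jacobi heat kernel with $\alpha=\nu$, $\beta=1/2$ are \emph{comparable for all $\nu>-1$} on the short-time range $0<u\le T$. One then splits the subordination integral at $u=T$; on $(0,T)$ the heat-kernel comparison gives $J_0\simeq\widetilde{J}_0$ directly, and on $(T,\infty)$ both heat kernels behave like $(xy)^{\nu+1/2}(1-x)(1-y)e^{-cu}$ (with different constants $c$), so the two tail integrals are comparable for bounded $t$. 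This yields $\mathbb{H}^{\nu}_t\simeq\widetilde{\mathbb{H}}^{\nu,1/2}_t$ for $t\le T$, and the short-time Poisson estimate then follows immediately from the already-known Jacobi estimate \eqref{Htilde}; no $u$-integration is ever performed. In other words, the ``main obstacle'' you identify --- making the Jacobi/Bessel transference uniform for all $\nu>-1$ --- is already resolved at the heat-kernel level by \cite{NR2}, and the subordination formula simply transports that comparison to the Poisson kernel.

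Your first approach, the direct change-of-variables matching of the two Poisson kernels, contains a genuine error. The relevant scaling between the Jacobi interval $(0,\pi)$ and the Fourier--Bessel interval $(0,1)$ is the linear one $\theta=\pi x$ (this is how $\widetilde{\mathbb{H}}^{\ab}_t$ is defined), so $\pi-\theta\simeq 1-x$, not $\sqrt{1-x}$. Consequently the ``elementary inequality'' you propose,
\[
(1-x)(1-y)(t+2-x-y)^{-2}\simeq \sqrt{(1-x)(1-y)}\,\bigl(t+\sqrt{1-x}+\sqrt{1-y}\bigr)^{-2},
\]
is false (test $x=y\to 1$ with $t$ fixed: the left side is $\simeq(1-x)^2/t^2$, the right side $\simeq(1-x)/t^2$). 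The factor $(1-x)(1-y)$ in the target estimate does not come from any nonlinear substitution; it arises because one is comparing with the Jacobi \emph{function} (Lebesgue-measure) kernel $\widetilde{\mathbb{H}}^{\nu,1/2}_t$, whose estimate already carries the factor $\bigl(\sqrt{(1-x)(1-y)}/(t+2-x-y)\bigr)^{2\beta+1}$ with $\beta=1/2$, and then one strips off the $(xy)^{\nu+1/2}$ weight via \eqref{PPFBrel}.
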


Assume that $\s>0$ and consider the potential operator
$$
\mathcal{I}^{\nu}_{\s}f(x) = \int_0^{1} \mathcal{K}_{\s}^{\nu}(x,y)f(y)\, d\mu_{\nu}(y),
$$
where 
\begin{equation*}
\mathcal{K}_{\s}^{\nu}(x,y)=\frac{1}{\Gamma(2\s)}\int_0^{\infty}\mathcal{H}_t^{\nu}(x,y) t^{2\s-1}\,dt, 
	\qquad x,y \in (0,1).
\end{equation*}
Theorem \ref{thm:FBPest} allows us to prove the following sharp bounds for this potential kernel.
\begin{thm} \label{thm:potkerFB}
Let $\nu > -1$ and $\s > 0$ be fixed. The estimate
\begin{align*}
\frac{\mathcal{K}_{\sigma}^{\nu}(x,y)}{(1-x)(1-y)} & \simeq 
	1 + \chi_{\{\s = \nu+1\}} \log \frac{2}{x+y}
	+ \chi_{\{\s=3/2\}} \log\frac{2}{2 - x-y} \\
	&\quad  + (x+y)^{2\s-2(\nu+1)}(2-x-y)^{2\s-3}\begin{cases}
   1, & \s>1/2\\
   \log\frac{(x+y)(2-x-y)}{|x-y|}, &\s=1/2\\
    \left(\frac{(x+y)(2-x-y)}{|x-y|}\right)^{1-2\s}, &\s<1/2
    \end{cases}
\end{align*}
holds uniformly in $x,y \in (0,1)$.
\end{thm}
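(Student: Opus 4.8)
The plan is to feed the sharp Poisson-kernel estimate of Theorem~\ref{thm:FBPest} into the subordination formula
\[
\mathcal{K}_{\sigma}^{\nu}(x,y)=\frac{1}{\Gamma(2\sigma)}\int_0^{\infty}\mathcal{H}_t^{\nu}(x,y)\,t^{2\sigma-1}\,dt
\]
and then evaluate the resulting $t$-integral. Writing $u=x+y$, $v=2-x-y$, $w=|x-y|$ (so that $0<w\le u\wedge v$, $u+v=2$, hence $u\vee v\in[1,2)$) and using that $\mathcal{K}_{\sigma}^{\nu}$ does not depend on the auxiliary parameter $T$ (so we may take it large, say $T=4>u\vee v$), Theorem~\ref{thm:FBPest} gives
\[
\frac{\mathcal{K}_{\sigma}^{\nu}(x,y)}{(1-x)(1-y)}\;\simeq\;\int_0^{T}\frac{t^{2\sigma}\,dt}{(t+u)^{2\nu+1}(t+v)^{2}\,(t^2+w^2)}\;+\;\int_{T}^{\infty}e^{-t s_{1,\nu}}\,t^{2\sigma-1}\,dt .
\]
The second integral is a positive finite constant depending only on $\nu,\sigma,T$, and supplies the leading summand $1$.

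For the first integral I would split $[0,T]$ at the breakpoints $w$, $u\wedge v$, $u\vee v$, $T$ and, on each subinterval, replace $t+u$, $t+v$, $t^2+w^2$ by their order of magnitude, reducing everything to the integral of a single power of $t$. The region $[u\vee v,T]$ (where $t\gtrsim1$ dominates $u,v,w$) and the region $[0,w]$ contribute, respectively, $\simeq1$ and $\simeq w^{2\sigma-1}u^{-2\nu-1}v^{-2}$; on $[w,u\wedge v]$ the integrand is $\simeq t^{2\sigma-2}u^{-2\nu-1}v^{-2}$, which produces the three-case dependence on $\sigma$ (a power of $u\wedge v$ when $\sigma>1/2$, the logarithm $\log\frac{u\wedge v}{w}$ when $\sigma=1/2$, a power of $w$ when $\sigma<1/2$); and on $[u\wedge v,u\vee v]$ the integrand is $\simeq t^{2\sigma-2\nu-3}v^{-2}$ when $u\le v$ and $\simeq t^{2\sigma-4}u^{-2\nu-1}$ when $v\le u$, whose borderline exponent $-1$ occurs exactly for $\sigma=\nu+1$ (yielding $\log\frac{2}{x+y}$), respectively $\sigma=3/2$ (yielding $\log\frac{2}{2-x-y}$). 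Summing the pieces, using $u\vee v\simeq1$ to collapse every ``far'' contribution into the constant $1$, and comparing powers of $u,v,w$ to decide which term dominates in each regime, yields the asserted estimate.

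A shorter route, which I would in fact prefer to write up, avoids this computation by comparison with the Jacobi trigonometric polynomial setting with $\alpha=\nu$, $\beta=\tfrac12$ (note that then $\alpha+\beta=\nu+\tfrac12\neq-1$ automatically, since $\nu>-1$). Comparing the sharp bounds of Theorems~\ref{thm:FBPest} and~\ref{thm:estPJ} and using $t+\pi(x+y)\simeq t+(x+y)$, $t+\pi(2-x-y)\simeq t+(2-x-y)$, $t^2+\pi^2(x-y)^2\simeq t^2+(x-y)^2$ uniformly, one obtains $\mathcal{H}_t^{\nu}(x,y)\simeq(1-x)(1-y)\,\mathcal{H}_t^{\nu,1/2}(\pi x,\pi y)$ uniformly for $t\le T$; since for $t>T$ both Poisson kernels decay exponentially and hence contribute only positive constants after integration against $t^{2\sigma-1}$ over $(T,\infty)$, it follows that $\mathcal{K}_{\sigma}^{\nu}(x,y)\simeq(1-x)(1-y)\,\mathcal{K}_{\sigma}^{\nu,1/2}(\pi x,\pi y)$ uniformly in $x,y$. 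Applying Theorem~\ref{thm:potker} with $\alpha=\nu$, $\beta=\tfrac12$ at the point $(\pi x,\pi y)$ then gives precisely the bracket in the statement: the factors $\pi$ cancel inside $\log\frac{2\pi}{\theta+\varphi}$ and $\log\frac{2\pi}{2\pi-\theta-\varphi}$, and elsewhere they are harmless multiplicative (or, inside $\log\frac{(\theta+\varphi)(2\pi-\theta-\varphi)}{|\theta-\varphi|}$, additive) constants absorbed by $\simeq$ and by the summand $1$.

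The hardest part of the direct approach is the bookkeeping: the three regimes for $\sigma$, the two logarithmic resonances $\sigma=\nu+1$ and $\sigma=3/2$, and the two sub-cases $x+y\lessgtr1$ must be combined, and in each combination one has to check that the sum of the subinterval contributions collapses to the single claimed expression --- in particular that $1$ correctly absorbs all configurations in which $x+y$ or $2-x-y$ stays bounded away from $0$, where the nominal power term is itself $\simeq1$. In the comparison approach there is essentially nothing beyond noticing the uniform equivalence of the two Poisson kernels, which is immediate from the two already available sharp estimates; the only point deserving a word is that the genuinely different exponential tails for $t>T$ are irrelevant, being swallowed by the constant term.
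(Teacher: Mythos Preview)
Your comparison approach is correct and is essentially the paper's own proof: the paper likewise observes that, by Theorems~\ref{thm:FBPest} and~\ref{thm:estPJ}, $\mathcal{H}_t^{\nu}(x,y)/((1-x)(1-y))$ has the same short-time bounds as $\mathcal{H}_t^{\nu,1/2}$ up to scaling and the same long-time behavior up to the exponential rate, and then concludes by invoking the argument (equivalently, the result) of Theorem~\ref{thm:potker} with $\alpha=\nu$, $\beta=1/2$. Your handling of the $\pi$-factors and the absorption of the additive $\log\pi$ into the summand $1$ is exactly the minor verification needed to make the black-box application of Theorem~\ref{thm:potker} precise.
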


From Theorem \ref{thm:potkerFB} it can be seen that
$L^p(d\mu_{\nu})\subset \domain\mathcal{I}_{\s}^{\nu}$ for $1\le p \le \infty$.
Moreover, Theorem \ref{thm:potkerFB} makes it possible to describe, in a sharp way, $L^p-L^q$
mapping properties of $\mathcal{I}_{\s}^{\nu}$. These turn out to be the same as for the Jacobi
potential operator $\mathcal{I}_{\s}^{\ab}$ with $\alpha=\nu$ and $\beta=-1/2$, 
as stated in the result below; see also Figure \ref{fig1}. The value of $\beta$ here is perhaps a bit
unexpected, since a fundamental connection between the Jacobi and Fourier-Bessel settings involves 
$\beta=1/2$; see \cite{NR2} and Section \ref{ssec}.
\begin{thm} \label{thm:LpLqFB}
Let $\nu > -1$, $\s>0$ and $1\le p,q \le \infty$. Set 
$$
\eta:= (\nu+1) \vee (1/2).
$$
Then $\mathcal{I}_{\s}^{\nu}$ has the following mapping properties with respect to the measure space
$((0,1),d\mu_{\nu})$.
\begin{itemize}
\item[(i)] If $\s > \eta$, then $\mathcal{I}_{\s}^{\nu}$ is of strong type $(p,q)$ for all $p$ and $q$.
\item[(ii)] If $\s = \eta$, then $\mathcal{I}_{\s}^{\nu}$ is of strong type $(p,q)$ 
	for $(p,q)\neq (1,\infty)$, and not of restricted weak type $(1,\infty)$.
\item[(iii)] Assume finally that $\s < \eta$.
	Then $\mathcal{I}_{\s}^{\nu}$ is of strong type $(p,q)$ provided that
	$$
		\frac{1}q \ge \frac{1}p -\frac{\s}{\eta} \quad \textrm{and} \quad (p,q) \notin
			\Big\{\Big(1,\frac{\eta}{\eta-\s}\Big), \Big(\frac{\eta}{\s},\infty\Big)\Big\}.
	$$
	Moreover, $\mathcal{I}_{\s}^{\nu}$ is of weak type $\big(1,\frac{\eta}{\eta-\s}\big)$ and of restricted
	weak type $\big(\frac{\eta}{\s},\infty\big)$.
	
	These results are sharp in the sense that $\mathcal{I}_{\s}^{\nu}$ is not of strong type
  $\big(1,\frac{\eta}{\eta-\s}\big)$, not of weak type $\big(\frac{\eta}{\s},\infty\big)$, and
  not of restricted weak type $(p,q)$ when $\frac{1}q < \frac{1}p - \frac{\s}{\eta}$.
\end{itemize}
\end{thm}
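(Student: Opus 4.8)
The plan is to deduce everything from the sharp kernel estimate of Theorem~\ref{thm:potkerFB}; since $\mathcal{K}_\s^\nu$ is nonnegative, its upper bound will suffice for all the positive statements (strong, weak and restricted weak type) and its lower bound for all the sharpness statements, so it is enough to study the integral operator whose kernel equals the right-hand side of Theorem~\ref{thm:potkerFB}. The starting point is a decomposition of $(0,1)^2$ into the interior-diagonal region $\{x,y\in[1/4,3/4]\}$, the region where $x+y$ is small, the region where $2-x-y$ is small, and the remaining mixed corners. On the mixed corners the kernel is bounded by a harmless integrable power. On the interior diagonal, and also in the region where $2-x-y$ is small --- where, writing $u=1-x$, $v=1-y$ and absorbing the diagonal factor of Theorem~\ref{thm:potkerFB}, the kernel collapses to $uv\,(u+v)^{-2}|x-y|^{2\s-1}\lesssim|x-y|^{2\s-1}$ --- it is a bounded multiple of the one-dimensional Riesz kernel $|x-y|^{2\s-1}$ (of $\log\frac{c}{|x-y|}$ if $\s=1/2$, of a bounded function if $\s>1/2$) tested against a measure locally comparable to Lebesgue measure. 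In the region where $x+y$ is small, by contrast, the kernel is comparable, away from the diagonal, to $(x+y)^{2\s-2(\nu+1)}$, which together with $d\mu_\nu(y)=y^{2\nu+1}\,dy$ and, for $\s<1/2$, the factor $\big(\frac{x+y}{|x-y|}\big)^{1-2\s}$, reproduces the classical Riesz potential of order $\s$ in dimension $d_\nu:=2\nu+2$. This dichotomy --- dimension $d_\nu$ near $0$ versus dimension $1$ along the diagonal and near $x=1$ --- is exactly what forces the critical index $2\eta=d_\nu\vee1$, that is $\eta=(\nu+1)\vee(1/2)$: the positive results demand the scaling conditions $\frac1q\ge\frac1p-\frac\s{\nu+1}$ and $\frac1q\ge\frac1p-2\s$ simultaneously, which amounts to $\frac1q\ge\frac1p-\frac\s\eta$, while restricted weak type must fail as soon as either is violated. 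One recognizes here the same picture as for the Jacobi operator $\mathcal{I}_\s^{\ab}$ with $\a=\nu$, $\b=-1/2$, for which $\delta=\eta$, and indeed the whole argument below runs parallel to the proof of Theorem~\ref{thm:LpLqJac} in that case; the only point needing separate verification is that the extra factor $(1-x)(1-y)$ and the power $(2-x-y)^{2\s-3}$ of the Fourier--Bessel kernel combine, near $x=1$, to the same one-dimensional Riesz behaviour (against $dy$) that the power $(2\pi-\t-\v)^{2\s-1}$ produces in the Jacobi kernel (against $d\t$).

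For the positive results I would proceed case by case. In case~(i), $\s>\eta$, one checks that all three terms in Theorem~\ref{thm:potkerFB} are uniformly bounded on $(0,1)^2$ (in particular $(x+y)^{2\s-2(\nu+1)}\le2^{2\s-2(\nu+1)}$ since $\s\ge\nu+1$, and $(2-x-y)^{2\s-3}(1-x)(1-y)\lesssim(2-x-y)^{2\s-1}\le2^{2\s-1}$), so $\mathcal{I}_\s^\nu\colon L^1(d\mu_\nu)\to L^\infty$ is bounded and, $\mu_\nu$ being finite, so is $L^p\to L^q$ for all $p,q$. In case~(ii), $\s=\eta$, the kernel is at most logarithmically unbounded --- through $\chi_{\{\s=\nu+1\}}\log\frac2{x+y}$ when $\eta=\nu+1$, or through the diagonal logarithm at $\s=1/2$ when $\eta=1/2$ --- and since logarithms are integrable against every power, $\mathcal{K}_\s^\nu(x,\cdot)\in L^r(d\mu_\nu)$ uniformly in $x$ for every $r<\infty$; by H\"older this gives strong type $(p,\infty)$ for $p>1$, by Minkowski's integral inequality together with the symmetry $\mathcal{K}_\s^\nu(x,y)=\mathcal{K}_\s^\nu(y,x)$ it gives strong type $(1,q)$ for $q<\infty$, and with the finiteness of $\mu_\nu$ these combine to strong type $(p,q)$ for every $(p,q)\neq(1,\infty)$. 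In case~(iii), $\s<\eta$, the crux is to establish that $\mathcal{K}_\s^\nu(x,\cdot)$ lies in weak-$L^{\eta/(\eta-\s)}(d\mu_\nu)$ uniformly in $x$; by the region analysis this reduces to verifying that the level sets of $(x+y)^{2\s-2(\nu+1)}$ with respect to $d\mu_\nu$ and of $|x-y|^{2\s-1}$ with respect to Lebesgue measure are of order $\lambda^{-\eta/(\eta-\s)}$, a direct computation once one uses $d_\nu=2\eta$. From this single estimate, Minkowski's inequality in $L^{\eta/(\eta-\s),\infty}$ --- normable because $\eta/(\eta-\s)>1$ --- yields weak type $\big(1,\frac{\eta}{\eta-\s}\big)$, and the Lorentz--H\"older inequality $L^{\eta/(\eta-\s),\infty}\times L^{\eta/\s,1}\to L^1$ together with $\|\chi_E\|_{L^{\eta/\s,1}(d\mu_\nu)}\simeq\mu_\nu(E)^{\s/\eta}$ yields restricted weak type $\big(\frac\eta\s,\infty\big)$. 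Strong type on the open critical segment then follows by Stein--Weiss (Marcinkiewicz) interpolation between these two endpoints (the interpolated pairs satisfy $q>p$, as required), strong type strictly above the line follows by the finite-measure embeddings $L^p\subset L^{p_0}$ and $L^{q_0}\subset L^q$ applied to points of that segment, and the two boundary pieces --- $p=1$ with $q<\frac{\eta}{\eta-\s}$, and $q=\infty$ with $p>\frac\eta\s$ --- follow again from Minkowski, respectively H\"older, together with the kernel bound (for the latter, by checking $\sup_x\|\mathcal{K}_\s^\nu(x,\cdot)\|_{L^{p'}(d\mu_\nu)}<\infty$ whenever $p'<\frac{\eta}{\eta-\s}$).

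For the sharpness I would use explicit test functions and sets together with the lower bound of Theorem~\ref{thm:potkerFB}. Testing on $f_\varepsilon=\mu_\nu\big((0,\varepsilon)\big)^{-1}\chi_{(0,\varepsilon)}$ when $\eta=\nu+1$, or on the normalized indicator of an $\varepsilon$-interval about an interior point when $\eta=1/2$, one gets $\mathcal{I}_\s^\nu f_\varepsilon(x)\gtrsim x^{2\s-2(\nu+1)}$, respectively $\gtrsim|x-x_0|^{2\s-1}$, on a fixed subinterval, whence $\|\mathcal{I}_\s^\nu f_\varepsilon\|_{L^{\eta/(\eta-\s)}(d\mu_\nu)}\gtrsim\big(\log\frac1\varepsilon\big)^{(\eta-\s)/\eta}\to\infty$; this shows $\mathcal{I}_\s^\nu$ is not of strong type $\big(1,\frac{\eta}{\eta-\s}\big)$, and in case~(ii) the same test evaluated in the $L^\infty$ norm shows it is not of restricted weak type $(1,\infty)$. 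Testing on $f(y)=y^{-2\s}\big(\log\frac2y\big)^{-\gamma}$ with $\frac\s\eta<\gamma\le1$ (or its diagonal analogue when $\eta=1/2$) produces a function in $L^{\eta/\s}(d\mu_\nu)$ whose $\mathcal{I}_\s^\nu$-image fails to be in $L^\infty$, so $\mathcal{I}_\s^\nu$ is not of weak type $\big(\frac\eta\s,\infty\big)$. Finally, taking $E=(0,r)$ when $\eta=\nu+1$, so that $\|\mathcal{I}_\s^\nu\chi_E\|_{L^{q,\infty}(d\mu_\nu)}\gtrsim r^{2\s+d_\nu/q}$ while $\mu_\nu(E)^{1/p}\simeq r^{d_\nu/p}$ (and an $r$-interval about an interior point when $\eta=1/2$, giving $r^{2\s+1/q}$ versus $r^{1/p}$), restricted weak type $(p,q)$ forces $\frac1q\ge\frac1p-\frac\s\eta$.

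The step I expect to be the main obstacle is the pair of endpoint estimates in case~(iii): one has to track exactly how the level sets of the kernel of Theorem~\ref{thm:potkerFB} are distributed with respect to $\mu_\nu$, which near $0$ behaves like a $d_\nu$-dimensional power measure while along the diagonal and near $x=1$ it is essentially Lebesgue, so that $\eta=(\nu+1)\vee(1/2)$ comes out as the binding critical index; and the borderline logarithmic terms --- those governing case~(ii) and the failure of strong type at the two critical corners --- will have to be handled separately and with some care.
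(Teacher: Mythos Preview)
Your approach is correct, but the paper takes a substantially shorter route. Rather than redo the region-by-region analysis, the paper observes directly from Theorems~\ref{thm:potkerFB} and~\ref{thm:potker} that
\[
\mathcal{K}_\sigma^\nu(x,y)\;\lesssim\;\mathcal{K}_\sigma^{\nu,-1/2}(\pi x,\pi y),\qquad x,y\in(0,1),
\]
with the measures $d\mu_\nu(x)$ and $d\mu_{\nu,-1/2}(\pi x)$ comparable; this transfers all the positive mapping properties of Theorem~\ref{thm:LpLqJac} (with $\alpha=\nu$, $\beta=-1/2$, hence $\delta=\eta$) to $\mathcal{I}_\sigma^\nu$ in one stroke. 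For the negative results the paper notes that the two kernels are in fact two-sidedly comparable when $x,y\in(0,3/4)$, and since all the counterexamples in the proof of Theorem~\ref{thm:LpLqJac} (for $\alpha\ge\beta$) live near $\theta=0$, or for $\delta=1/2$ near an interior point, they carry over unchanged. Your observation that ``the whole argument runs parallel'' to the Jacobi proof is exactly right; the paper simply exploits this by comparing kernels rather than re-running the analysis.

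Two remarks on your direct approach. First, your weak-type $\big(1,\frac{\eta}{\eta-\sigma}\big)$ argument via a uniform weak-$L^{\eta/(\eta-\sigma)}$ bound on $\mathcal{K}_\sigma^\nu(x,\cdot)$ together with Minkowski in the (normable) Lorentz space is a clean alternative to the paper's route, which in the Jacobi setting passes through the fractional-integral operator $U_{2\sigma}$ on a space of homogeneous type and a Hedberg-type inequality (Lemma~\ref{lem:Ixi}). Second, in the sharpness part the phrase ``on a fixed subinterval'' is slightly off: for the failure of strong type $\big(1,\frac{\eta}{\eta-\sigma}\big)$ you need the lower bound $\mathcal{I}_\sigma^\nu f_\varepsilon(x)\gtrsim x^{2\sigma-2(\nu+1)}$ on the \emph{$\varepsilon$-dependent} interval $(2\varepsilon,1/2)$, so that the $L^{\eta/(\eta-\sigma)}$-integral picks up the factor $\log(1/\varepsilon)$; on any genuinely fixed subinterval the norm would stay bounded.
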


\subsection{Lebesgue measure Fourier-Bessel setting}
This context emerges from incorporating the measure $\mu_{\nu}$ into the system $\{\phi_n^{\nu}\}$,
see for instance \cite{NR1,NR2}. In this way we derive the Fourier-Bessel system 
$\{\psi_n^{\nu}: n \ge 1\}$,
$$
\psi_n^{\nu}(x) = x^{\nu+1/2} \phi_n^{\nu}(x), \qquad n \ge 1, \quad x \in (0,1),
$$
which for each $\nu>-1$ is an orthonormal basis in $L^2(dx)$; here $dx$ stands for Lebesgue measure
in the interval $(0,1)$. This system consists of eigenfunctions of the differential operator 
$$
\mathbb{L}^{\nu} = - \frac{d^2}{dx^2} - \frac{1/4-\nu^2}{x^2},
$$
and we have 
$$
\mathbb{L}^{\nu} \psi_n^{\nu} = s_{n,\nu}^2\, \psi_n^{\nu}, \qquad n \ge 1.
$$

The associated Poisson semigroup $\{\exp(-t(\mathbb{L}^{\nu})^{1/2})\}$, generated by means of the
square root of the natural self-adjoint extension of $\mathbb{L}^{\nu}$ in this context, 
has an integral kernel given by (see \cite{NR1,NR2})
\begin{equation} \label{PPFBrel}
\mathbb{H}_t^{\nu}(x,y) = (xy)^{\nu+1/2}\, \mathcal{H}_t^{\nu}(x,y).
\end{equation}
Thus Theorem \ref{thm:FBPest} provides also sharp estimates for $\mathbb{H}_t^{\nu}(x,y)$.

Let $\s>0$ and consider the potential operator
$$
\mathbb{I}_{\s}^{\nu}f(x) = \int_0^1 \mathbb{K}_{\s}^{\nu}(x,y) f(y)\, dy,
$$
where
\begin{equation} \label{KlebFB}
\mathbb{K}_{\s}^{\nu}(x,y) = (xy)^{\nu+1/2} \mathcal{K}_{\s}^{\nu}(x,y).
\end{equation}
Sharp estimates of $\mathbb{K}_{\s}^{\nu}(x,y)$ follow readily from \eqref{KlebFB} and
Theorem \ref{thm:potkerFB}. In particular, in case $\nu \ge -1/2$ one concludes that 
$\domain \mathbb{I}_{\s}^{\nu}$ contains all $L^p(dx)$ spaces, $1\le p \le \infty$. If $\nu<-1/2$, then 
$L^p(dx) \subset \domain \mathbb{I}_{\s}^{\nu}$ provided that $\frac{1}p < \nu + \frac{3}2$.

Another consequence of the bounds for $\mathbb{K}_{\s}^{\nu}(x,y)$ is the result below describing
$L^p-L^q$ mapping properties of $\mathbb{I}_{\s}^{\nu}$.
They occur to be the same as for the Jacobi potential operator $\mathbb{I}_{\s}^{\ab}$ with
$\alpha=\nu$ and $\beta=1/2$, see also Figures \ref{fig2}-\ref{fig4}.
\begin{thm} \label{thm:LpLqFBL}
Let $\nu > -1$, $\s>0$ and $1\le p,q \le \infty$. 
Then $\mathbb{I}_{\s}^{\nu}$ has the following mapping properties with respect to the measure space
$((0,1),dx)$.
\begin{itemize}
\item[(a)] Assume that $\nu \ge -1/2$.
	\begin{itemize}
		\item[(a1)] If $\s > 1/2$, then $\mathbb{I}_{\s}^{\nu}$ is of strong type $(p,q)$
			for all $p$ and $q$.
		\item[(a2)] If $\s=1/2$, 
				then $\mathbb{I}_{\s}^{\nu}$ is of strong type
			$(p,q)\neq (1,\infty)$. 
		\item[(a3)] If $\s < 1/2$, then $\mathbb{I}_{\s}^{\nu}$ is of strong type $(p,q)$ provided that
			$$
				\frac{1}{q} \ge \frac{1}p - 2\s \quad \textrm{and} \quad (p,q) \notin
				\Big\{\Big(1,\frac{1}{1-2\s}\Big), \Big(\frac{1}{2\s},\infty\Big)\Big\}.
			$$
			Moreover, $\mathbb{I}_{\s}^{\nu}$ is of weak type $(1,\frac{1}{1-2\s})$ and of restricted weak type
			$(\frac{1}{2\s},\infty)$.
	\end{itemize}
\item[(b)] Assume now that $\nu < -1/2$.
	\begin{itemize}
		\item[(b1)] If $\s > \nu +1$, then $\mathbb{I}_{\s}^{\nu}$ is of strong type $(p,q)$ provided that
			$\frac{1}p < \nu + \frac{3}2$ and $\frac{1}q > -\nu - \frac{1}2$. 
			Furthermore, $\mathbb{I}_{\s}^{\nu}$ is of weak
			type $(p,\frac{-1}{\nu+1/2})$ for $\frac{1}p < {\nu+\frac{3}2}$ and of restricted weak type
			$(\frac{1}{\nu+3/2},q)$ for $\frac{1}q \ge {-\nu-\frac{1}2}$.
		\item[(b2)] If $\s=\nu+1$, then $\mathbb{I}_{\s}^{\nu}$ has the mapping properties from (b1),
			except for that it is not of restricted weak type $(\frac{1}{\nu+3/2},\frac{-1}{\nu+1/2})$.
		\item[(b3)] If $\s < \nu +1$, then $\mathbb{I}_{\s}^{\nu}$ is of strong type $(p,q)$ when
			$$
				\frac{1}p < \nu +\frac{3}2 \quad \textrm{and} \quad \frac{1}q > -\nu-\frac{1}2 \quad \textrm{and}
					\quad \frac{1}q \ge \frac{1}p -2\s.
			$$
			Further, $\mathbb{I}_{\s}^{\nu}$ is of weak type $(p,\frac{-1}{\nu+1/2})$ for 
			$\frac{1}p < {2\s-\nu -\frac{1}2}$ and of restricted weak type 
			$(\frac{1}{2\s-\nu-1/2},\frac{-1}{\nu+1/2})$ and
			$(\frac{1}{\nu+3/2},q)$ for $\frac{1}q \ge {\nu+\frac{3}2-2\s}$.
	\end{itemize}
\end{itemize}
All the results in parts (a) and (b) are sharp in the sense described in Theorem \ref{thm:LpLqJacL}.
\end{thm}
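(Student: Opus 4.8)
The plan is to deduce Theorem~\ref{thm:LpLqFBL} from Theorem~\ref{thm:LpLqJacL} by identifying the kernel $\mathbb{K}_{\s}^{\nu}$ with the $\beta=1/2$ specialization of the Jacobi kernel $\mathbb{K}_{\s}^{\ab}$, exactly as the preamble to the theorem anticipates.

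First I would write down the sharp two-sided bound for $\mathbb{K}_{\s}^{\nu}(x,y)$ obtained by inserting Theorem~\ref{thm:potkerFB} into \eqref{KlebFB}, and then check that it is, term by term, the bound for $\mathbb{K}_{\s}^{\ab}(\t,\v)$ that \eqref{Kleb} and Theorem~\ref{thm:potker} give when $\alpha=\nu$ and $\beta=1/2$, transported to the interval $(0,1)$ by the linear change of variables $\t=\pi x$, $\v=\pi y$. The matching is immediate: with $\beta=1/2$ one has $\beta+\tfrac12=1$ and $2\s-2(\beta+1)=2\s-3$, so the boundary weight $\big((\pi-\t)(\pi-\v)\big)^{\beta+1/2}$ turns into $(1-x)(1-y)$, the power of $2\pi-\t-\v$ into the power of $2-x-y$, and $\chi_{\{\s=\beta+1\}}=\chi_{\{\s=3/2\}}$; meanwhile $\alpha=\nu$ accounts for the factor $(x+y)^{2\s-2(\nu+1)}$ and for $\chi_{\{\s=\nu+1\}}$; and the diagonal factors (the $1$, the logarithm, or the power of $\tfrac{(x+y)(2-x-y)}{|x-y|}$ according to $\s>1/2$, $\s=1/2$, $\s<1/2$), together with the remaining weight $(xy)^{\nu+1/2}$, which matches $(\sin\tfrac\t2\sin\tfrac\v2)^{\alpha+1/2}$, coincide as well. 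Away from the diagonal and the two corners both kernels collapse to the classical model $1+|x-y|^{2\s-1}$ (with the usual logarithmic borderline), so the comparison is valid on all of $(0,1)^2$, not merely near $\{0,1\}$.

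Since $\mathbb{K}_{\s}^{\nu}(x,y)$ and $\mathbb{K}_{\s}^{\ab}(\pi x,\pi y)$ (with $\alpha=\nu$, $\beta=1/2$) are thus comparable nonnegative kernels, and the linear map $x\mapsto\pi x$ merely rescales Lebesgue measure and the relevant $L^p$, $L^{p,1}$ and $L^{p,\infty}$ (quasi-)norms by fixed positive constants, the operator $\mathbb{I}_{\s}^{\nu}$ on $((0,1),dx)$ is of strong, weak, or restricted weak type $(p,q)$ precisely when $\mathbb{I}_{\s}^{\ab}$ on $((0,\pi),d\t)$ is, for the same $(p,q)$. It then only remains to rewrite Theorem~\ref{thm:LpLqJacL} with $\alpha=\nu$, $\beta=1/2$. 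There $\kappa=(\alpha+\tfrac12)\wedge(\beta+\tfrac12)=(\nu+\tfrac12)\wedge 1$, so $\kappa\ge 0$ exactly when $\nu\ge-1/2$; in that case part~(a) of Theorem~\ref{thm:LpLqJacL}, whose only threshold is $1/2$, is literally part~(a) of Theorem~\ref{thm:LpLqFBL}. When $\nu<-1/2$ one has $\kappa=\nu+\tfrac12<0$, and the substitutions $\kappa+\tfrac12=\nu+1$, $-\kappa=-\nu-\tfrac12$, $1+\kappa=\nu+\tfrac32$, $2\s-\kappa=2\s-\nu-\tfrac12$, $1+\kappa-2\s=\nu+\tfrac32-2\s$ turn parts~(b1)--(b3) into parts~(b1)--(b3) of Theorem~\ref{thm:LpLqFBL}, sharpness assertions included; the claim that $L^p(dx)\subset\domain\mathbb{I}_{\s}^{\nu}$ for $1/p<\nu+\tfrac32$ (all $p$ when $\nu\ge-1/2$) follows from the kernel estimate by H\"older's inequality, matching the Jacobi bound $1/p<\tfrac32+(\alpha\wedge\beta)$.

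The only genuine task is the kernel identification in the second paragraph---verifying that Theorem~\ref{thm:potkerFB} is exactly the $\beta=1/2$ instance of Theorem~\ref{thm:potker} after rescaling, globally on the square---after which the statement is a transcription of Theorem~\ref{thm:LpLqJacL}. Should one wish to avoid that black box, the alternative is to prove the positive results directly from Theorem~\ref{thm:potkerFB} by splitting $(0,1)^2$ into the two corner regions, a diagonal strip and a bounded-away region and applying weighted Schur/Young estimates together with a localized Hardy--Littlewood--Sobolev inequality and off-diagonal Marcinkiewicz interpolation at the weak-type endpoints, and to prove sharpness by testing on $\chi_{(0,\epsilon)}$, on power functions $y^{-a}(1-y)^{-b}$, and on functions concentrated near the diagonal, letting the free parameter degenerate; the delicate part there is the endpoint and corner bookkeeping, i.e.\ pinning down the exact admissible region of $(1/p,1/q)$ and separating strong from weak from restricted weak type at the corners (including the logarithmic cases $\s=\nu+1$, $\s=3/2$)---but this is identical to the analysis already carried out for $\mathbb{I}_{\s}^{\ab}$.
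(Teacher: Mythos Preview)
Your proposal is correct and follows exactly the paper's own argument: establish the comparability $\mathbb{K}_{\s}^{\nu}(x,y)\simeq\mathbb{K}_{\s}^{\nu,1/2}(\pi x,\pi y)$ from \eqref{KlebFB}, Theorem~\ref{thm:potkerFB}, Theorem~\ref{thm:potker} and \eqref{Kleb}, then read off all positive and negative mapping properties from Theorem~\ref{thm:LpLqJacL} with $\alpha=\nu$, $\beta=1/2$ (noting that $\alpha+\beta=\nu+1/2>-1$ is automatic). Your term-by-term verification of the kernel match and the explicit dictionary $\kappa=\nu+\tfrac12$ are more detailed than the paper's one-line reduction, but the route is the same.
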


\section{Estimates of the potential kernels} \label{sec:ker}

In this section we prove Theorems \ref{thm:potker}, \ref{thm:FBPest} and \ref{thm:potkerFB}.
We begin with an auxiliary technical result that gives sharp description of the behavior of the integral
$$
J_{\gamma}(T,S,w):=\int_{T}^S\frac{t^\gamma \, dt}{t^2+w^2}
$$
considered as a function of $T,S$ and $w$.
\begin{lem}\label{lem:intJ}
Let $\gamma\in \R$ and $M>0$ be fixed.
\begin{itemize}
\item[(a)] If $\gamma > -1$ then
$$
J_{\gamma}(T,S,w) \simeq \frac{S-T}{S} \, \frac{S^{\gamma+1}}{(S \vee w)^2} \begin{cases}
	1, & \gamma > 1 \\
	1+\log^+ \frac{S}{T \vee {w}}, & \gamma =1\\
	\left( \frac{T\vee {w}}{S\vee {w}}\right)^{\gamma-1}, & \gamma \in (-1,1)
	\end{cases}
$$
uniformly in $0 \le T \le S < \infty$ and $0 < w \le M$.
\item[(b)] If $\gamma \le -1$ then
$$
J_{\gamma}(T,S,w) \simeq \frac{S-T}{S} \, \frac{T^{\gamma+1}}{(T\vee w)^2} \begin{cases}
		1+\log^+ \frac{S\wedge {w}}{T}, & \gamma =-1 \\
		1, & \gamma < -1
		\end{cases} 
$$
uniformly in $0 < T \le S < \infty$ and $0 < w \le M$.
\end{itemize}
\end{lem}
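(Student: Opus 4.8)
The plan is to reduce the estimate to the behavior of pure power integrals. The key elementary observation is that $t^2+w^2\simeq(t\vee w)^2$ uniformly in $t,w>0$, so that
\begin{equation*}
J_{\gamma}(T,S,w)\simeq\int_T^S\frac{t^{\gamma}}{(t\vee w)^2}\,dt,
\end{equation*}
where the integrand equals $t^{\gamma-2}$ on the part of $(T,S)$ lying above $w$ and equals $t^{\gamma}/w^2$ on the part lying below $w$. Thus, after splitting the range of integration at the point $w$ whenever $T<w<S$, the whole matter comes down to controlling integrals of the form $\int_a^b t^{\delta}\,dt$. As a preliminary step I would record, for every real $\delta$, the sharp bound
\begin{equation*}
\int_a^b t^{\delta}\,dt\simeq\frac{b-a}{b}\Big(b^{\delta+1}\,\chi_{\{\delta>-1\}}+\big(1+\log\tfrac{b}{a}\big)\,\chi_{\{\delta=-1\}}+a^{\delta+1}\,\chi_{\{\delta<-1\}}\Big),
\end{equation*}
valid uniformly in $0\le a\le b$ (with $a>0$ when $\delta\le-1$). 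This follows by writing down the antiderivative explicitly, factoring out the indicated quantity, and noticing that the remaining ratio, viewed as a function of $r=a/b$, extends to a continuous and strictly positive function on the closed interval $[0,1]$; its only candidate singularities, at $r=0$ and $r=1$, are removed by elementary limits, so the ratio is pinched between two positive constants depending only on $\delta$.

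With this in hand, I would distinguish three regimes according to the position of $w$. If $w\le T$, then $(t\vee w)^2=t^2$ throughout $(T,S)$, hence $J_{\gamma}(T,S,w)\simeq\int_T^S t^{\gamma-2}\,dt$, and the power-integral bound applies with $\delta=\gamma-2$, the critical value $\delta=-1$ corresponding to $\gamma=1$; one then checks that the outcome matches the claimed expression, noting that in this regime $\log^+\frac{S}{T\vee w}$ and $\log^+\frac{S\wedge w}{T}$ turn into genuine logarithms. If $w\ge S$, then $(t\vee w)^2=w^2$ throughout, so $J_{\gamma}(T,S,w)\simeq w^{-2}\int_T^S t^{\gamma}\,dt$, handled with $\delta=\gamma$; here the $\log^+$ terms vanish, exactly as the claimed formula predicts. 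In the remaining regime $T<w<S$ I would write $J_{\gamma}(T,S,w)\simeq w^{-2}\int_T^w t^{\gamma}\,dt+\int_w^S t^{\gamma-2}\,dt$ and apply the power-integral bound to each summand separately (case $\delta=\gamma$ for the first, case $\delta=\gamma-2$ for the second).

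The last task — assembling the two summands in the regime $T<w<S$ into the single expression on the right-hand side of the lemma — is where the real work lies. Each summand is a product of a length factor and a power, possibly decorated with a logarithm when $\gamma$ equals one of the critical values $-1$ or $1$, while the target is, in each case, the corresponding line of parts (a) and (b). To see that the sum is comparable to that single expression I would use monotonicity of the relevant power together with a dyadic alternative, $w\le S/2$ versus $w>S/2$ (and, symmetrically, $T\le w/2$ versus $T>w/2$): in one branch the two endpoints in question are mutually comparable, so the base of the power may be switched at will, whereas in the other the dominant subinterval has length comparable to the whole, which lets one absorb the smaller term. A convenient shortcut when $\gamma>1$ is that $w^{-2}\int_T^w t^{\gamma}\,dt\simeq\int_T^w t^{\gamma-2}\,dt$, both sides being $\simeq(w-T)w^{\gamma-2}$ by the power-integral bound since then $\gamma>-1$ and $\gamma-2>-1$ simultaneously; consequently $J_{\gamma}(T,S,w)\simeq\int_T^S t^{\gamma-2}\,dt$ and no genuine recombination is needed. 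Running this comparison through the finitely many cases $\gamma>1$, $\gamma=1$, $-1<\gamma<1$, $\gamma=-1$, $\gamma<-1$, and checking in each that the pieces line up with parts (a) and (b), completes the argument.

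I expect the main obstacle to be precisely this patient case-by-case recombination in the intermediate regime $T<w<S$, and in particular the bookkeeping at the critical exponents $\gamma=\pm1$: there the corrective factors $1+\log(\cdot)$ have to be tracked carefully, both to match the $\log^+$ appearing in the statement and so that no logarithm is accidentally lost or gained when the two summands are merged.
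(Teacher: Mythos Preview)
Your proposal is correct and takes essentially the same approach as the paper: both split the integral at $w$, reduce to sharp bounds for pure power integrals (the paper phrases this via the elementary estimate $|A^{\xi}-B^{\xi}|\simeq|A-B|(A\vee B)^{\xi-1}$, which is equivalent to your $\int_a^b t^{\delta}\,dt$ bound), and then run through the five cases $\gamma>1$, $\gamma=1$, $-1<\gamma<1$, $\gamma=-1$, $\gamma<-1$ with the same dyadic subcase analysis you describe. The only organizational difference is that the paper first rescales to $S=1$ before the intermediate regime, so your alternatives $w\lessgtr S/2$ and $T\lessgtr w/2$ become the paper's $w\lessgtr 1/2$ and $T\lessgtr 1/2$.
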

In our applications of Lemma \ref{lem:intJ} below, we will always have $w < S$. However, we decided
to state the result in a slightly more general form than actually needed. This allows one to see the
symmetry between items (a) and (b), and also to understand better the behavior of $J_{\gamma}(T,S,w)$.
The latter may be of independent interest.

In the proof of Lemma \ref{lem:intJ} we will use the following simple estimate.
\begin{lem}\label{lem:diff}
Given $\xi \in \R$, $\xi \neq 0$, we have
$$
|A^{\xi}-B^{\xi}|\simeq |A-B| \begin{cases}
	(A\vee B)^{\xi-1}, & \xi >0 \\
	(A \wedge B)^{\xi+1} /(AB), & \xi <0
	\end{cases}
$$
uniformly in $A,B>0$.
\end{lem}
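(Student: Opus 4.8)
\textbf{Proof plan for Lemma \ref{lem:diff}.}

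The plan is to reduce everything to the elementary inequality for the function $s\mapsto s^{\xi}$ and the mean value theorem, treating the cases $\xi>0$ and $\xi<0$ separately and exploiting homogeneity to cut down the number of free parameters.

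First I would observe that both sides of the claimed equivalence are homogeneous of degree $\xi$ under the simultaneous scaling $(A,B)\mapsto(\lambda A,\lambda B)$, $\lambda>0$: indeed $|A-B|(A\vee B)^{\xi-1}$ scales like $\lambda^{1}\cdot\lambda^{\xi-1}=\lambda^{\xi}$, and $|A-B|(A\wedge B)^{\xi+1}/(AB)$ scales like $\lambda^{1}\cdot\lambda^{\xi+1}/\lambda^{2}=\lambda^{\xi}$. Hence, after normalizing by $A\vee B=1$ (say $A\le B=1$, the other case being symmetric by swapping $A$ and $B$, since both sides are symmetric in $A,B$), it suffices to prove
$$
|1-A^{\xi}|\simeq (1-A)\begin{cases}1,&\xi>0\\ A^{\xi},&\xi<0\end{cases}
$$
uniformly in $A\in(0,1]$, where on the right the two cases reflect $(A\wedge B)^{\xi+1}/(AB)=A^{\xi+1}/A=A^{\xi}$ with $B=1$, and the case $\xi>0$ uses $(A\vee B)^{\xi-1}=1$.

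Second, for $\xi>0$ I would prove $|1-A^{\xi}|\simeq 1-A$ on $(0,1]$ directly: writing $g(A)=1-A^{\xi}$, we have $g(1)=0$ and $g'(A)=-\xi A^{\xi-1}$, so by the mean value theorem $1-A^{\xi}=\xi c^{\xi-1}(1-A)$ for some $c\in(A,1)$; when $\xi\ge 1$ one has $c^{\xi-1}\in[A^{\xi-1},1]$, and since a uniform two-sided bound is needed one checks separately $A\in[1/2,1]$ (where $c^{\xi-1}\simeq 1$) and $A\in(0,1/2]$ (where $1-A^{\xi}\simeq 1\simeq 1-A$), and symmetrically for $0<\xi<1$ where $c^{\xi-1}\in[1,A^{\xi-1}]$ so one again splits at $A=1/2$. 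For $\xi<0$ I would substitute $A=1/C$ with $C\ge 1$ and reduce to the already-treated case: $|1-A^{\xi}|=|1-C^{-\xi}|=|1-C^{|\xi|}|\simeq C^{|\xi|}-1$, and then $C^{|\xi|}-1\simeq C^{|\xi|}(1-C^{-|\xi|})\simeq C^{|\xi|}(1-C^{-1})\cdot(\text{const})$ after applying the $\xi>0$ case to the exponent $1$ and to the exponent $|\xi|$ on $(0,1]$; unwinding, $C^{|\xi|}(C-1)/C=A^{\xi}(1-A)$ up to constants, which is the desired right-hand side.

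The only mildly delicate point — and the one I would be careful to write out — is that the implied constants in $\simeq$ depend on $\xi$ but must be uniform in $A,B$; the splitting at $A\vee B\,/\,2$ and the explicit mean-value computation make this transparent, and there is no genuine obstacle. (The statement is purely elementary; it is recorded only because it is used repeatedly in estimating the kernel integrals via Lemma \ref{lem:intJ}.)
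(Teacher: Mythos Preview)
Your proposal is correct and follows essentially the same approach as the paper. The paper's own proof is extremely terse: for $\xi>0$ it simply declares the estimate ``elementary,'' and for $\xi<0$ it reduces to the positive case by replacing $A,B$ with their reciprocals and $\xi$ with $-\xi$. Your homogeneity reduction to $B=1$ followed by the substitution $A=1/C$ is precisely this same reciprocal trick after normalization, and your mean-value/splitting argument for $\xi>0$ is a reasonable way to spell out what the paper leaves implicit.
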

\begin{proof}
For $\xi > 0$ the estimate is elementary. The case $\xi < 0$ follows then from the result for positive
$\xi$ by replacing $A$ and $B$ by their inverses, respectively, and $\xi$ by its opposite.
\end{proof}

\begin{proof}[Proof of Lemma \ref{lem:intJ}]
The case $T=S$ is trivial, so let $T<S$.
The change of variable $t \to {M}t$ shows that we may assume that $M=1$.
Moreover, we can always consider $T>0$ since then the result asserted for $T=0$ in item (a) follows
by a limiting argument.
In the next step we will further reduce the proof to the case $S=1$.

Consider first $S \le {w}$. Then
$$
J_{\gamma}(T,S,w) \simeq \frac{1}{w^2} \int_T^S t^{\gamma}\, dt \simeq \frac{1}{w^2} \begin{cases}
	|S^{\gamma+1}-T^{\gamma+1}|, & \gamma \neq -1 \\
	\log(S/T), & \gamma = -1
	\end{cases}
$$
and applying Lemma \ref{lem:diff} we see that
$$
J_{\gamma}(T,S,w) \simeq \frac{1}{w^2} \begin{cases}
	(S-T) S^{\gamma}, & \gamma > -1 \\
	\log(S/T), & \gamma = -1 \\
	(S-T) T^{\gamma +1}/S, & \gamma < -1
	\end{cases}.
$$
As easily verified, this coincides with the asserted bounds when $\gamma \neq -1$.
The same is true for $\gamma =-1$, but this case requires perhaps some comment. Namely, the relevant
relation
$$
\log\frac{S}{T} \simeq \frac{S-T}S \Big( 1+\log \frac{S}{T}\Big)
$$
can be checked by distinguishing the cases $2T< S$ and $S/2 \le T < S$ and using in addition
(in the latter case) the bounds
\begin{equation} \label{log_bd}
\log x \simeq x-1, \qquad 1 \le x \le C,
\end{equation}
where $C< \infty$ is a fixed constant.

Next, we consider the complementary range $S> {w}$.
Changing the variable of integration $t \to S t$, we obtain
$$
J_{\gamma}(T,S,w) = S^{\gamma-1}\int_{T/S}^1 \frac{t^{\gamma}\, dt}{t^2+w^2/S^2}
	= S^{\gamma-1} J_{\gamma}(T/S,1,w/S),
$$
and here $w/S<1$. Assuming that the bounds of Lemma \ref{lem:intJ} are true for $S=1$ and applying
them to the last expression we get their validity for general $S$.

Summing up, the proof will be finished once we estimate suitably the integral
$$
J_{\gamma}(T,1,w) = \int_T^1 \frac{t^{\gamma}\, dt}{t^2+w^2}.
$$
The case when $T \ge {w}$ is straightforward. We have
$$
J_{\gamma}(T,1,w) \simeq \int_T^1 t^{\gamma -2}\, dt \simeq \begin{cases}
	|1-T^{\gamma-1}|, & \gamma \neq 1\\
	\log(1/T), & \gamma = 1
	\end{cases}.
$$
Applying now Lemma \ref{lem:diff} and using in addition \eqref{log_bd} when $\gamma =1$, we get
$$
J_{\gamma}(T,1,w) \simeq (1-T) \begin{cases}
	1, & \gamma > 1\\
	1+\log(1/T), & \gamma =1 \\
	T^{\gamma-1}, & \gamma < 1
	\end{cases},
$$
as needed. It remains to treat the case $T < {w}$. Observe that
$$
J_{\gamma}(T,1,w) \simeq \frac{1}{w^2} \int_T^{{w}} t^{\gamma}\, dt + \int_{{w}}^1 t^{\gamma-2}\, dt
	\equiv \mathcal{I}_1 + \mathcal{I}_2.
$$
We must show that
\begin{equation*} 
\mathcal{I}_1 + \mathcal{I}_2 \simeq (1-T) \begin{cases}
	1, & \gamma > 1\\
	1+\log(1/{w}), & \gamma =1\\
	{w}^{\gamma-1}, & -1<\gamma<1\\
	\frac{1}{w^2}\left(1+\log \frac{{w}}{T}\right), & \gamma =-1\\
	T^{\gamma+1}/w^2, & \gamma < -1
	\end{cases}
\end{equation*}
uniformly in $0 < T < {w}\le 1$. To this end we always assume that $T < {w}$.

With the aid of Lemma \ref{lem:diff} one easily finds that
$$
\mathcal{I}_1 \simeq \frac{1}{w^2}\begin{cases}
	({w}-T){w}^{\gamma}, & \gamma > -1 \\
	\log({w}/T), & \gamma = -1\\
	({w}-T) T^{\gamma+1}/{w}, & \gamma < -1
	\end{cases},
\qquad
\mathcal{I}_2 \simeq \begin{cases}
	1-{w}, & \gamma > 1 \\
	\log(1/{w}), & \gamma =1 \\
	(1-{w}){w}^{\gamma-1}, & \gamma < 1
	\end{cases}.
$$
We will analyze separately each of the five cases emerging naturally from the ranges of $\gamma$
appearing above. This will finish the proof.\\
\textbf{Case 1: $\gamma > 1$.} We have
$$
\mathcal{I}_1 + \mathcal{I}_2 \simeq {w}^{\gamma-2}({w}-T) + 1 -{w}.
$$
Clearly, if $1/2\le {w} \le 1$ then $\mathcal{I}_1 + \mathcal{I}_2 \simeq 1-T$.
On the other hand, if ${w}<1/2$ then $\mathcal{I}_1 + \mathcal{I}_2\simeq 1 \simeq 1-T$.
The conclusion follows.\\
\textbf{Case 2: $\gamma = 1$.} Now
$$
\mathcal{I}_1 + \mathcal{I}_2 \simeq {w}^{-1}({w}-T) + \log(1/{w}).
$$
If ${w} \ge 1/2$ then, by \eqref{log_bd},
$
\mathcal{I}_1 + \mathcal{I}_2 \simeq 
{w}^{-1} ({w}-T+1-{w})
\simeq 1-T \simeq (1-T)(1+\log 1/{w}).
$
For ${w}< 1/2$ we have
$
\mathcal{I}_1 + \mathcal{I}_2 \simeq 1-T/{w} + \log 1/{w} \simeq \log 1/{w}
	\simeq (1-T)(1+\log 1/{w}).
$
\\
\textbf{Case 3: $-1< \gamma < 1$.} This time
$$
\mathcal{I}_1 + \mathcal{I}_2 \simeq {w}^{\gamma-2}({w}-T) + {w}^{\,\gamma-1}(1-{w})
$$
and so for ${w}\ge 1/2$ we can write
$
\mathcal{I}_1 + \mathcal{I}_2 \simeq {w}-T+1-{w} \simeq (1-T){w}^{\gamma-1},
$
and when ${w} < 1/2$ we have
$
\mathcal{I}_1 + \mathcal{I}_2 \simeq {w}^{\gamma-1} (1-T/{w}) + {w}^{\gamma-1}
	\simeq (1-T){w}^{\gamma-1},
$
as desired.\\
\textbf{Case 4: $\gamma = -1$.} In this case
$$
\mathcal{I}_1 + \mathcal{I}_2 \simeq w^{-2} \log({w}/T) + {w}^{-2} (1-{w}).
$$
If $T \ge 1/2$ then by \eqref{log_bd} we see that
$
\mathcal{I}_1 + \mathcal{I}_2 \simeq \log {w}/T + 1-{w} \simeq {w}/T-1 +1-{w}
	\simeq 1-T \simeq (1-T) w^{-2} (1+\log {w}/T).
$
When $T<1/2$ we write
$
\mathcal{I}_1 + \mathcal{I}_2 \simeq w^{-2} (1-{w}+\log {w}/T) \simeq
	(1-T) w^{-2} (1+\log{w}/T),
$
where the last relation is verified by considering separately the subcases ${w} < 3/4$
and ${w}>3/4$.\\
\textbf{Case 5: $\gamma < -1$.} We now have
$$
\mathcal{I}_1 + \mathcal{I}_2 \simeq \frac{{w}-T}{{w}} \,\frac{T^{\gamma+1}}{w^2}
	+ \frac{{w}^{\gamma+1}}{w^2} (1-{w}).
$$
For $T\ge 1/2$ it follows that
$
\mathcal{I}_1 + \mathcal{I}_2 \simeq w^{-2} T^{\gamma+1} ({w}-T) + w^{-2}T^{\gamma+1}(1-{w})
	\simeq (1-T)w^{-2} T^{\gamma+1}.
$
When $T < 1/2$ the same estimates are justified by considering separately the subcases
$T \ge 3{w}/4$ and $T < 3{w}/4$. More precisely, in the first subcase
$\mathcal{I}_1 \lesssim T^{\gamma+1}/w^2 \simeq \mathcal{I}_2$ and in the second one
$\mathcal{I}_2 \lesssim {w}^{\gamma+1}/w^2 < T^{\gamma+1}/w^2 \simeq \mathcal{I}_1$.
The conclusion follows.
\end{proof}

\subsection{Estimates of the Jacobi potential kernels}
With Theorem \ref{thm:estPJ} and Lemma \ref{lem:intJ} at our disposal, we can now verify the sharp estimate
of $\mathcal{K}_{\s}^{\ab}(\t,\v)$ stated in Theorem \ref{thm:potker}.
\begin{proof}[Proof of Theorem \ref{thm:potker}]
We may assume that
\begin{equation} \label{ass}
\t + \v \le 2\pi -\t -\v
\end{equation}
since then the complementary case follows by replacing $\t$ and $\v$ by $\pi-\t$ and $\pi-\v$, respectively,
and exchanging the roles of $\a$ and $\b$. Note that \eqref{ass} is equivalent to each of the inequalities
$$
\t+\v \le \pi, \qquad 2\pi - \t-\v \ge \pi.
$$
In particular, \eqref{ass} implies 
$$
2\pi-\t-\v \simeq 1, \qquad \t,\v \in (0,\pi).
$$
These relations will be used throughout the proof without further mention.

In view of \eqref{ass}, the estimates we must show read as
\begin{equation}
\mathcal{K}_{\s}^{\ab}(\t,\v) \simeq 1+ \chi_{\{\s=\a+1\}} 
	\log\frac{2\pi}{\t+\v} + (\t+\v)^{-2\a-1} \begin{cases}
	(\t+\v)^{2\s-1}, & \s > 1/2 \\
	1+ \log\frac{\t+\v}{|\t-\v|}, & \s=1/2 \\
	|\t-\v|^{2\s-1}, & \s < 1/2
	\end{cases}. \label{aim} 
\end{equation}
Using Theorem \ref{thm:estPJ} we can write, uniformly in $\t,\v \in (0,\pi)$,
\begin{align*}
\mathcal{K}_{\s}^{\ab}(\t,\v) \simeq & \;(\t+\v)^{-2\a-1}\int_0^{\t+\v} \frac{t^{2\s}\, dt}{t^2+(\t-\v)^2}
	+ \int_{\t+\v}^{2\pi-\t-\v} \frac{t^{2\s-2\a-1}\,dt}{t^2+(\t-\v)^2} \\
	& \quad+ \int_{2\pi-\t-\v}^{2\pi} \frac{t^{2\s-2\a-2\b-2}\,dt}{t^2+(\t-\v)^2}
	+ \int_{2\pi}^{\infty} \exp\left( -t \frac{|\a+\b+1|}2\right) t^{2\s-1}\, dt \\
\equiv & \;\mathcal{J}_1 + \mathcal{J}_2 + \mathcal{J}_3 + \mathcal{J}_4.
\end{align*}
Clearly, all the components here are nonnegative and $\mathcal{J}_4 = c(\s,\ab) \simeq 1$.
Moreover, by \eqref{ass},
$$
\mathcal{J}_3 \lesssim \int_{\pi}^{2\pi} \frac{t^{2\s-2\a-2\b-2}\, dt}{t^2+(\t-\v)^2}
	\lesssim \int_{\pi}^{2\pi}  t^{2\s-2\a-2\b-4} \, dt \simeq 1.
$$
Therefore
$$
\mathcal{J}_3 + \mathcal{J}_4 \simeq 1.
$$

To describe the behavior of $\mathcal{J}_1$ and $\mathcal{J}_2$ we apply Lemma \ref{lem:intJ}.
More precisely, using Lemma \ref{lem:intJ} (a) with $M=2\pi$, $\gamma = 2\s$, $T=0$,
$S=\t+\v$ and $w=|\t-\v|$, we get
$$
\mathcal{J}_1 \simeq (\t+\v)^{-2\a-1} \begin{cases}
	(\t+\v)^{2\s-1}, & \s > 1/2 \\
	1+ \log\frac{\t+\v}{|\t-\v|}, & \s=1/2 \\
	|\t-\v|^{2\s-1}, & \s < 1/2
	\end{cases}.
$$
Letting $\gamma = 2\s-2\a-1$, $T=\t+\v$, $S=2\pi-\t-\v$ and applying again Lemma \ref{lem:intJ}
(item (a) when $\s>\a$ and item (b) for $\s \le \a$) leads to the bound
$$
\mathcal{J}_2 \simeq (\pi-\t-\v) \begin{cases}
	1, & \s > \a+1 \\
	1+\log\frac{2\pi}{\t+\v}, & \s=\a+1 \\
	(\t+\v)^{2\s-2\a-2}, & \s < \a+1
	\end{cases}.
$$

Combining these estimates of $\mathcal{J}_1$ and $\mathcal{J}_2$ we see that when $\s \neq \a+1$
\begin{align*}
\mathcal{J}_2 & \lesssim \begin{cases}
	1, & \s > \a +1\\
	(\t+\v)^{-2\a-1} (\t+\v)^{2\s-1}, & \s < \a+1
	\end{cases} \\
	& \lesssim 1 + \mathcal{J}_1,
\end{align*}
and in the singular case $\s=\a+1$ we have
$$
1 + \mathcal{J}_2 \simeq \log\frac{2\pi}{\t+\v}.
$$
Since $\mathcal{J}_1$ is comparable with the third component in \eqref{aim} and
$1\simeq \mathcal{J}_3 + \mathcal{J}_4$ with the first one, the conclusion follows.
\end{proof}

\subsection{Estimates of the Poisson and potential kernels in the Fourier-Bessel settings} \label{ssec}
Our first objective now is to
prove Theorem \ref{thm:FBPest}. To proceed, we consider the Jacobi trigonometric `function'
setting scaled to the interval $(0,1)$, see \cite[Section 2]{NR2}. Define
$$
\widetilde{\phi}_n^{\ab}(x) = \sqrt{\pi} \phi_n^{\ab}(\pi x), \qquad n \ge 0, \quad x \in (0,1),
$$
where $\phi_n^{\ab}$ are as in \eqref{Jtf}.
Then the system $\{\widetilde{\phi}_n^{\ab} : n \ge 0\}$ is an orthonormal basis in $L^2(dx)$,
$dx$ being Lebesgue measure in $(0,1)$. Moreover, each $\widetilde{\phi}_n^{\ab}$ is an eigenfunction
of the differential operator
$$
\widetilde{\mathbb{J}}^{\ab} = -\frac{d^2}{dx^2} - \frac{\pi^2(1/4-\alpha^2)}{4\sin^2(\pi x/2)}
	- \frac{\pi^2(1/4-\beta^2)}{4\cos^2(\pi x/2)}
$$
and we have
$$
\widetilde{\mathbb{J}}^{\ab} \widetilde{\phi}_n^{\ab} = \pi^2 \Big( n + \frac{\alpha+\beta+1}{2}\Big)^2
	\widetilde{\phi}_n^{\ab}, \qquad n \ge 0.
$$

The heat and Poisson kernels in this context are given by 
\begin{align*}
\widetilde{\mathbb{G}}_t^{\ab}(x,y) & = \sum_{n=0}^{\infty} 
	\exp\bigg(-t\pi^2 \Big(n+\frac{\alpha+\beta+1}2\Big)^2\bigg)\, \widetilde{\phi}_n^{\ab}(x)\,
		\widetilde{\phi}_n^{\ab}(y), \\ 
\widetilde{\mathbb{H}}_t^{\ab}(x,y) & = \sum_{n=0}^{\infty} 
	\exp\bigg(-t\pi \Big|n+\frac{\alpha+\beta+1}2\Big|\bigg)\, \widetilde{\phi}_n^{\ab}(x)\,
		\widetilde{\phi}_n^{\ab}(y),		
\end{align*}
respectively. Large time behavior of these kernels can be described by means of the above oscillating 
series. Indeed, taking into account that $\widetilde{\phi}_0^{\ab}(x)$ is a constant times
$(\sin(\pi x/2))^{\alpha+1/2}(\cos(\pi x/2))^{\beta+1/2}$ and that (see e.g.~\cite[(14)]{NSS})
$$
\big|\widetilde{\phi}_n^{\ab}(x)\big| \lesssim \Big(\sin\frac{\pi x}2\Big)^{\alpha+1/2}
	\Big(\cos\frac{\pi x}2 \Big)^{\beta+1/2} n^{\alpha+\beta+2}, \qquad n \ge 1, \quad x \in (0,1),
$$
we conclude that for large $t$ the above series behave like their first terms.
More precisely, in case of the heat kernel we have the following.
\begin{propo} \label{prop:larget}
For $T$ sufficiently large, 
$$
\widetilde{\mathbb{G}}_t^{\ab}(x,y) \simeq (xy)^{\alpha+1/2} \big((1-x)(1-y)\big)^{\beta+1/2}\, 
	\exp\bigg(-t\pi^2 \Big(\frac{\alpha+\beta+1}2\Big)^2\bigg), \qquad t \ge T, 
$$
uniformly in $x,y \in (0,1)$.
\end{propo}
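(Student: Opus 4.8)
The plan is to prove the two-sided bound for $\widetilde{\mathbb{G}}_t^{\ab}(x,y)$ by isolating the $n=0$ term and showing that the remainder of the series is dominated by it once $t$ is large enough. First I would record that the bottom eigenfunction satisfies $\widetilde{\phi}_0^{\ab}(x) = c_0^{\ab}(\sin(\pi x/2))^{\alpha+1/2}(\cos(\pi x/2))^{\beta+1/2}$, so that, using $\sin(\pi x/2)\simeq x$ and $\cos(\pi x/2)\simeq 1-x$ on $(0,1)$, the $n=0$ term of the series equals, up to constants,
\begin{equation*}
(xy)^{\alpha+1/2}\big((1-x)(1-y)\big)^{\beta+1/2}\exp\!\bigg(-t\pi^2\Big(\frac{\alpha+\beta+1}{2}\Big)^2\bigg),
\end{equation*}
which is exactly the claimed comparison quantity. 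This gives the lower bound for free provided all the remaining terms have the same sign; since they need not, the real content is an upper bound on the tail that beats this term.

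Next I would estimate the tail $\sum_{n\ge 1}$. Using the pointwise bound $|\widetilde{\phi}_n^{\ab}(x)|\lesssim (\sin(\pi x/2))^{\alpha+1/2}(\cos(\pi x/2))^{\beta+1/2}n^{\alpha+\beta+2}$ recalled just before the proposition, applied in both variables, the $n$-th term is bounded in absolute value by
\begin{equation*}
C\,(xy)^{\alpha+1/2}\big((1-x)(1-y)\big)^{\beta+1/2}\, n^{2(\alpha+\beta+2)}\exp\!\bigg(-t\pi^2\Big(n+\frac{\alpha+\beta+1}{2}\Big)^2\bigg),
\end{equation*}
uniformly in $x,y$. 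Summing in $n\ge 1$ and factoring out $\exp(-t\pi^2((\alpha+\beta+1)/2)^2)$, the tail is therefore controlled by the $n=0$ quantity times
\begin{equation*}
\sum_{n\ge 1} n^{2(\alpha+\beta+2)}\exp\!\bigg(-t\pi^2\Big[\Big(n+\tfrac{\alpha+\beta+1}{2}\Big)^2-\Big(\tfrac{\alpha+\beta+1}{2}\Big)^2\Big]\bigg),
\end{equation*}
and the bracketed exponent equals $\pi^2 n(n+\alpha+\beta+1)$, which for $n\ge 1$ grows at least linearly in $n$ with a positive coefficient once we are away from the origin. Hence for $t\ge T$ with $T$ large the sum is a convergent series that tends to $0$ as $T\to\infty$; in particular it can be made smaller than, say, $1/2$, so the tail is at most half the $n=0$ term. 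Combining, $\widetilde{\mathbb{G}}_t^{\ab}(x,y)$ lies between $1/2$ and $3/2$ times the $n=0$ term, which is the assertion.

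The one mild subtlety — and the step I would be most careful about — is the degenerate case $\alpha+\beta+1=0$, in which the exponent $(\alpha+\beta+1)/2$ vanishes and the $n=0$ term carries no decay; but that case is excluded here since the proposition is invoked only when $0$ is not an eigenvalue (equivalently $\alpha+\beta\neq -1$), and when $\alpha+\beta+1<0$ one notes $|n+(\alpha+\beta+1)/2|$ is still minimized at $n=0$ among $n\ge 0$ for the relevant parameter ranges, so the same argument applies verbatim after replacing the square by the square of the absolute value throughout. The rest is routine: uniform convergence of the series (which justifies term-by-term manipulation) follows from the same exponential bounds, and the comparability $\sin(\pi x/2)\simeq x$, $\cos(\pi x/2)\simeq 1-x$ on $(0,1)$ is elementary. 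This proposition is the large-$t$ input; the short-time behavior needed for Theorem~\ref{thm:FBPest} will be handled separately by a comparison argument relating $\mathcal{H}_t^{\nu}$ to the scaled Jacobi kernel.
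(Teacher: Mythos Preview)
Your argument is correct and is precisely the approach the paper takes: the entire justification in the paper is the short paragraph immediately preceding the proposition, which records the form of $\widetilde{\phi}_0^{\ab}$ and the bound $|\widetilde{\phi}_n^{\ab}(x)|\lesssim (\sin(\pi x/2))^{\alpha+1/2}(\cos(\pi x/2))^{\beta+1/2}n^{\alpha+\beta+2}$, and then asserts that the series behaves like its first term for large $t$. One small correction to your side remark: the case $\alpha+\beta+1=0$ is \emph{not} excluded from the proposition (it is stated for all $\ab>-1$), nor does it need to be --- your own tail computation gives $\lambda_n-\lambda_0=\pi^2 n(n+\alpha+\beta+1)=\pi^2 n^2$ there, which is as good as it gets.
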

As for the short time behavior of $\widetilde{\mathbb{H}}_t^{\ab}(x,y)$, Theorem \ref{thm:estPJ}
combined with the relation \eqref{PPrel} and a simple scaling argument leads to the estimate
\begin{equation} \label{Htilde}
\widetilde{\mathbb{H}}_t^{\ab}(x,y) \simeq \bigg( \frac{\sqrt{xy}}{t+x+y}\bigg)^{2\alpha+1}
	\bigg( \frac{\sqrt{(1-x)(1-y)}}{t+2-x-y}\bigg)^{2\beta+1} \frac{t}{t^2+(x-y)^2}, \qquad 0 < t \le T,
\end{equation}
uniformly in $x,y \in (0,1)$, where $T>0$ is arbitrary and fixed.

\begin{proof}[{Proof of Theorem \ref{thm:FBPest}}]
Let $T>0$, to be fixed later. Notice that since the short and long time bounds of the theorem
coincide for $t$ staying in a fixed interval $[T_0,T_1]$, with $0<T_0<T_1<\infty$, we may 
prove the result with $T$ chosen as large as we wish.

The estimate of $\mathcal{H}_t^{\nu}(x,y)$ for $t \ge T$ follows from \cite[Theorem 3.7]{NR1}, provided
that $T$ is large enough, and we may assume this is the case. 
Thus it remains to verify the short time estimate. Further, in view of \eqref{PPFBrel}, 
it is sufficient to show that the Poisson kernel $\mathbb{H}_t^{\nu}(x,y)$
in the Lebesgue measure Fourier-Bessel setting has for $t \le T$ the same bounds as 
$\widetilde{\mathbb{H}}_t^{\ab}(x,y)$ in \eqref{Htilde} above, with $\alpha=\nu$ and $\beta=1/2$.

Let $\mathbb{G}_t^{\nu}(x,y)$ be the heat kernel related to the Lebesgue measure Fourier-Bessel context.
By the subordination principle,
$$
\mathbb{H}_t^{\nu}(x,y) = \frac{t}{\sqrt{4\pi}} \int_0^{\infty} \mathbb{G}_{u}^{\nu}(x,y)
	e^{-t^2/(4u)} u^{-3/2}\, du
$$
and similarly for $\widetilde{\mathbb{H}}_t^{\nu,1/2}(x,y)$ and $\widetilde{\mathbb{G}}_t^{\nu,1/2}(x,y)$.
Set
\begin{align*}
J_0 = \int_0^T \mathbb{G}_u^{\nu}(x,y) e^{-t^2/(4u)} u^{-3/2}\, du, & \qquad
J_{\infty} = \int_T^{\infty} \mathbb{G}_u^{\nu}(x,y) e^{-t^2/(4u)} u^{-3/2}\, du, \\
\widetilde{J}_0 = \int_0^T \widetilde{\mathbb{G}}_u^{\nu,1/2}(x,y) e^{-t^2/(4u)} u^{-3/2}\, du, & \qquad
\widetilde{J}_{\infty} = \int_T^{\infty} \widetilde{\mathbb{G}}_u^{\nu,1/2}(x,y) e^{-t^2/(4u)} u^{-3/2}\, du.
\end{align*}
According to \cite[Remark 3.3]{NR2}, we know that
$$
\mathbb{G}_u^{\nu}(x,y) \simeq \widetilde{\mathbb{G}}_u^{\nu,1/2}(x,y), \qquad x,y\in (0,1), \quad 0<u\le T.
$$
Thus $J_0\simeq \widetilde{J}_0$ uniformly in $x,y\in (0,1)$ and $t > 0$.

To show that $J_{\infty} \simeq \widetilde{J}_{\infty}$ uniformly in $x,y\in (0,1)$ and $t \le T$, we
note that by \cite[Theorem 3.7]{NR1} and Proposition \ref{prop:larget}
\begin{align*}
{\mathbb{G}_u^{\nu}(x,y)} &\simeq  {(xy)^{\nu+1/2}(1-x)(1-y)} \exp\big( -u s_{1,\nu}^2\big), 
	\qquad u \ge T,\\
	{\widetilde{\mathbb{G}}_u^{\nu,1/2}(x,y)} & \simeq {(xy)^{\nu+1/2}(1-x)(1-y)}
		\exp\big( -u \pi^2 {(\nu+3/2)^2/4}\big), \qquad u \ge T,
\end{align*}
provided that $T$ is chosen sufficiently large. Now we fix $T$ that is 
simultaneously large enough in all the relevant
places above and observe that the desired comparability of $J_{\infty}$ and $\widetilde{J}_{\infty}$
will follow once we check that, given $c_1,c_2>0$, one has
$$
\int_T^{\infty} e^{-c_1 u} e^{-t^2/(4u)} u^{-3/2}\, du \simeq 
	\int_T^{\infty} e^{-c_2 u} e^{-t^2/(4u)} u^{-3/2}\, du, \qquad 0 < t \le T.
$$
This, however, is clear because $t^2/(4u)$ stays bounded in the integrals above if $t\le T$.

Summing up, we see that
$$
\mathbb{H}_t^{\nu}(x,y) \simeq \widetilde{\mathbb{H}}_t^{\nu,1/2}(x,y), \qquad x,y \in (0,1), \quad t \le T.
$$
This finishes the proof.
\end{proof}

We are now in a position to prove sharp estimates for the Fourier-Bessel potential kernel 
$\mathcal{K}_{\s}^{\nu}(x,y)$.
\begin{proof}[{Proof of Theorem \ref{thm:potkerFB}}]
Observe that by Theorem \ref{thm:FBPest} the expression $\mathcal{H}_t^{\nu}(x,y)/((1-x)(1-y))$
has, up to an obvious scaling, the same short time bounds as the Jacobi-Poisson kernel
$\mathcal{H}_t^{\nu,1/2}(\t,\v)$, see Theorem \ref{thm:estPJ}. Moreover, the long time behaviors are
also the same, up to constants in the arguments of the exponentials. Therefore we can proceed exactly as
in the proof of Theorem \ref{thm:potker} above and conclude the estimate of Theorem~\ref{thm:potkerFB}.
\end{proof}

\section{$L^p-L^q$ estimates} \label{sec:LpLq}

To prove the $L^p-L^q$ estimates stated in Section \ref{sec:prel}, we will need some preparatory
facts and results. A part of them are some basic properties of a generic integral operator
\begin{equation*}
Tf(\t) = \int_{\mathcal{X}} K(\t,\v) f(\v) \, d\mu(\v)
\end{equation*}
related to a measure space $(\mathcal{X},\mu)$. Here, for our purposes,
we fix $\mathcal{X}=(0,\pi)$,  $\mu=\mu_{\ab}$ or $\mu$ be Lebesgue measure,
and we always assume that the kernel $K(\t,\v)$ is nonnegative and symmetric, $K(\t,\v) = K(\v,\t) \ge 0$.
Considering $1 \le p,q \le \infty$, we make the following observations.
\begin{itemize}
\item[(A)] If $T$ is of strong type $(p,q)$, then $T$ is of strong type $(\widetilde{p},\widetilde{q})$
	for $\widetilde{p} \ge p$ and $\widetilde{q} \le q$. Indeed, since $\mu$ is finite, we have
	$L^{r}(d\mu) \subset L^{\widetilde{r}}(d\mu)$ for $r \ge \widetilde{r}$ and the claim follows.
\item[(B)] If $T$ is of strong type $(1,q)$, then $T$ is of strong type $(\widetilde{p},\widetilde{q})$
	provided that $1/\widetilde{q} \ge 1/\widetilde{p} - 1/q'$. Indeed, by duality $T^{*}=T$ is of strong type
	$(q',\infty)$, so the conclusion follows by interpolating between the strong types
		$(1,q)$ and $(q',\infty)$, and (A) above.
\item[(C)] If $T$ is of weak type $(1,q)$, $1 < q < \infty$,
	then $T$ is of restricted weak type $(q',\infty)$
	and of strong type $(\widetilde{p},\widetilde{q})$ for $1/\widetilde{q}=1/\widetilde{p} - 1/q'$,
	$\widetilde{p} > 1$, $\widetilde{q} < \infty$. This is justified as follows. Notice that the weak type
	$(1,q)$ means, in terms of Lorentz spaces, boundedness from $L^1(d\mu)$ to $L^{q,\infty}(d\mu)$. 
	Then the adjoint operator $T^{*}$ maps boundedly $(L^{q,\infty}(d\mu))^{*}$ into 
	$(L^1(d\mu))^{*} = L^{\infty}(d\mu)$. Further, the associate
	space of $L^{q,\infty}(d\mu)$ in the sense of \cite[Chapter 1, Definition 2.3]{BS} is $L^{q',1}(d\mu)$
	(cf. \cite[Chapter 4, Theorem 4.7]{BS}) and by \cite[Chapter 1, Theorem 2.9]{BS} it can be regarded as
	a subspace of the dual of $L^{q,\infty}(d\mu)$. 
	Since $T^{*}=T$, we infer that $T$ is of restricted weak type
	$(q',\infty)$. The remaining assertion follows by an extension of the Marcinkiewicz interpolation 
	theorem for Lorentz spaces due to Stein and Weiss, see \cite[Chapter V, Theorem 3.15]{SW} or
	\cite[Chapter 4, Theorem 5.5]{BS}.
\item[(D)] If $T$ is of weak type $(p,\infty)$, $p < \infty$, then $T$ is of strong type $(1,p')$.
	Actually, by definition, weak type $(p,\infty)$ coincides with strong type $(p,\infty)$, which means
	boundedness from $L^p(d\mu)$ to $L^{\infty}(d\mu)$. 
	Since $T^{*}=T$ and $L^1(d\mu) \subset (L^1(d\mu))^{**} = (L^{\infty}(d\mu))^*$, the conclusion follows.
\end{itemize}

Given $0 < \xi \le 1$ and $f \ge 0$, let 
\begin{equation} \label{Uxi}
U_{\xi}f(\t) = \int_{\mathcal{X}} \frac{|\t-\v|^{\xi}}{\mu(B(\t,|\t-\v|))} f(\v) \, d\mu(\v).
\end{equation}
This operator appears in the literature as a variant of fractional integral related
to spaces of homogeneous type, see \cite[Section 5]{AM} or \cite[Section 1]{K} and references given there.
We shall use the following.
\begin{lem} \label{lem:Ixi}
Let $(\mathcal{X},\mu)$ be as above and fix $0<\xi \le 1$. Assume that there are constants $s>\xi$ and
$c>0$ such that $\mu(B_r) \ge cr^s$ for any ball $B_r$ in $\mathcal{X}$ of radius $r < \diam \mathcal{X}$.
Then the sublinear operator $f \mapsto U_{\xi}|f|$ is bounded from $L^1(d\mu)$ to 
weak $L^{{s}/({s-\xi})}(d\mu)$.
\end{lem}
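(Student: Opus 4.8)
The assertion is a weak-type $(1, s/(s-\xi))$ bound for the sublinear operator $f \mapsto U_\xi|f|$, so it suffices to show that for $f \in L^1(d\mu)$ with $\|f\|_{L^1(d\mu)} = 1$ and any $\lambda > 0$ one has $\mu(\{\t : U_\xi|f|(\t) > \lambda\}) \lesssim \lambda^{-s/(s-\xi)}$. By replacing $f$ with $|f|$ we may assume $f \ge 0$. The standard device is to split the kernel at a level $\rho = \rho(\lambda)$ to be chosen: write $U_\xi f = U_\xi^{\mathrm{loc}} f + U_\xi^{\mathrm{glob}} f$, where in $U_\xi^{\mathrm{loc}}$ the integration is restricted to $\{\v : |\t - \v| \le \rho\}$ and in $U_\xi^{\mathrm{glob}}$ to $\{\v : |\t - \v| > \rho\}$. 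One then estimates each piece separately and optimizes $\rho$.

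\textbf{The global part.} For $|\t - \v| > \rho$, the hypothesis $\mu(B(\t, r)) \ge c\, r^s$ with $s > \xi$ gives, for the weight appearing in \eqref{Uxi},
\begin{equation*}
\frac{|\t - \v|^{\xi}}{\mu(B(\t, |\t - \v|))} \le \frac{|\t - \v|^{\xi}}{c\, |\t - \v|^{s}} = \frac{1}{c}\, |\t - \v|^{\xi - s} \le \frac{1}{c}\, \rho^{\xi - s},
\end{equation*}
since $\xi - s < 0$. Hence $U_\xi^{\mathrm{glob}} f(\t) \le c^{-1} \rho^{\xi - s} \int_{\mathcal{X}} f \, d\mu = c^{-1}\rho^{\xi-s}$, a pointwise bound uniform in $\t$. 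Choosing $\rho$ so that $c^{-1}\rho^{\xi-s} = \lambda/2$, i.e. $\rho \simeq \lambda^{-1/(s-\xi)}$, forces $U_\xi^{\mathrm{glob}} f(\t) \le \lambda/2$ everywhere, so $\{U_\xi f > \lambda\} \subset \{U_\xi^{\mathrm{loc}} f > \lambda/2\}$ and the global part contributes nothing to the level set.

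\textbf{The local part.} It remains to bound $\mu(\{U_\xi^{\mathrm{loc}} f > \lambda/2\})$. The clean way is to prove a weak-type $(1,1)$-style estimate for the truncated operator with the correct gain. Decompose the local region dyadically: $\{\v : |\t - \v| \le \rho\} = \bigcup_{j \ge 0} A_j(\t)$ with $A_j(\t) = \{\v : 2^{-j-1}\rho < |\t - \v| \le 2^{-j}\rho\}$. On $A_j(\t)$ one has $|\t - \v|^\xi \le (2^{-j}\rho)^\xi$ and $\mu(B(\t, |\t-\v|)) \ge \mu(B(\t, 2^{-j-1}\rho)) \gtrsim (2^{-j}\rho)^{s}$, though for the local estimate it is cleaner to keep $\mu(B(\t, |\t-\v|))$ in the denominator and use only that $A_j(\t) \subset B(\t, 2^{-j}\rho)$. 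Then
\begin{equation*}
U_\xi^{\mathrm{loc}} f(\t) = \sum_{j \ge 0} \int_{A_j(\t)} \frac{|\t-\v|^\xi}{\mu(B(\t, |\t-\v|))}\, f(\v)\, d\mu(\v) \lesssim \sum_{j \ge 0} (2^{-j}\rho)^\xi \, \frac{1}{\mu(B(\t, 2^{-j}\rho))} \int_{B(\t, 2^{-j}\rho)} f \, d\mu \lesssim \rho^\xi \sup_{r \le \rho} \frac{1}{\mu(B(\t,r))} \int_{B(\t,r)} f\, d\mu,
\end{equation*}
the last step summing the geometric series $\sum_j (2^{-j})^\xi$ since $\xi > 0$. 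Thus $U_\xi^{\mathrm{loc}} f(\t) \lesssim \rho^\xi\, Mf(\t)$, where $M$ is the (uncentered) Hardy--Littlewood maximal operator on the space of homogeneous type $(\mathcal{X}, \mu, |\cdot|)$. Since $M$ is of weak type $(1,1)$ (the doubling property of $\mu$ on the finite interval $\mathcal{X}$ is standard; for $\mu_{\ab}$ and for Lebesgue measure it is immediate), we get
\begin{equation*}
\mu\big(\{U_\xi^{\mathrm{loc}} f > \lambda/2\}\big) \le \mu\big(\{Mf \gtrsim \lambda \rho^{-\xi}\}\big) \lesssim \frac{\rho^{\xi}}{\lambda} \int_{\mathcal{X}} f\, d\mu = \frac{\rho^\xi}{\lambda} \simeq \frac{\lambda^{-\xi/(s-\xi)}}{\lambda} = \lambda^{-s/(s-\xi)},
\end{equation*}
using $\rho \simeq \lambda^{-1/(s-\xi)}$. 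Combining the two parts yields $\mu(\{U_\xi f > \lambda\}) \lesssim \lambda^{-s/(s-\xi)} \|f\|_{L^1(d\mu)}^{s/(s-\xi)}$ after restoring the normalization, which is exactly the claimed weak $L^{s/(s-\xi)}$ bound.

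\textbf{Main obstacle.} The one genuinely nontrivial input is the weak-type $(1,1)$ boundedness of the maximal operator $M$, i.e. that $(\mathcal{X}, \mu)$ is a space of homogeneous type. For $\mathcal{X} = (0,\pi)$ this requires the doubling property $\mu(B(\t, 2r)) \lesssim \mu(B(\t, r))$ uniformly; for Lebesgue measure this is trivial, and for $d\mu_{\ab}(\t) = (\sin\frac\t2)^{2\a+1}(\cos\frac\t2)^{2\b+1}\, d\t$ (with $\ab > -1$, so the exponents exceed $-1$ and the measure is locally finite and doubling) it is a routine verification. An alternative that sidesteps quoting the maximal theorem is to prove the local weak-type bound directly by a Vitali-type covering argument on the level set, but invoking doubling plus the classical maximal theorem is the most economical route. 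One should also note that the hypothesis only supplies the lower bound $\mu(B_r) \ge c r^s$; the doubling (an upper-type control) is a separate, easily checked fact about the two concrete measures in play, and it is what makes the maximal operator bounded.
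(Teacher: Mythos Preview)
Your proof is correct and follows essentially the same route as the paper: both split $U_\xi|f|$ into a near part (controlled by a dyadic decomposition and the Hardy--Littlewood maximal function $Mf$, using doubling) and a far part (controlled by the lower bound $\mu(B_r)\ge cr^s$ and $\|f\|_1$), and then invoke the weak $(1,1)$ bound for $M$. The only cosmetic difference is that the paper packages the two pieces into the pointwise Hedberg inequality $U_\xi|f|(\t)\lesssim \|f\|_1^{\xi/s}\,(Mf(\t))^{1-\xi/s}$ (optimizing the cut-off $\rho$ at each point $\t$) and then reads off the weak-type bound, whereas you fix $\rho=\rho(\lambda)$ at the level-set stage; these are interchangeable formulations of the same argument.
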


\begin{proof}
We follow well known arguments going back to Hedberg's paper \cite{Hed}, 
see for instance the proof of \cite[Corollary 5.2]{AM} or the proof
of \cite[Proposition 3.19]{H}. The integral defining $U_{\xi}|f|$ is divided into `good' and `bad'
parts. Then treatment of the good part is straightforward and the bad part is analyzed by means of
a dyadic decomposition, with the aid of the assumed lower estimate for $\mu(B_r)$ and the doubling property
of $\mu$. In this way one arrives at the so-called Hedberg's inequality
$$
U_{\xi}|f|(\t) \lesssim \|f\|_1^{\xi/s} \big( Mf(\t)\big)^{1-\xi/s},
$$
where $M$ stands for the (centered) Hardy-Littlewood maximal function in the space $(\mathcal{X},\mu)$.
Since $M$ satisfies the weak type $(1,1)$ inequality (see \cite[Theorem 2.2]{H}), we get 
$$
\mu\big(\{ U_{\xi}|f| > \lambda \}\big) \lesssim \mu\Big(\Big\{ Mf(\t) > 
	\big({\lambda}/{\|f\|_1^{\xi/s}}\big)^{s/(s-\xi)} \Big\}\Big)
	\lesssim \bigg(\frac{\|f\|_1}{\lambda}\bigg)^{s/(s-\xi)},
$$
uniformly in $\lambda > 0$ and $f \in L^1(d\mu)$.
\end{proof}

\subsection{$\boldsymbol{L^p-L^q}$ estimates in the Jacobi trigonometric polynomial setting} \label{sssec}
Our strategy to prove Theorem \ref{thm:LpLqJac} is based on decomposing
(in the sense of $\simeq$) the potential kernel according to the estimate of Theorem 
\ref{thm:potker}. We write
$$
\mathcal{K}_{\s}^{\ab}(\t,\v) \simeq \sum_{i=1}^6 \mathcal{K}_i(\t,\v),
$$
where
\begin{align*}
\mathcal{K}_1(\t,\v) & := 1, \\
\mathcal{K}_2(\t,\v) & := \chi_{\{\s=\alpha+1\}} \log\frac{2\pi}{\t+\v},\\
\mathcal{K}_3(\t,\v) & := \chi_{\{\s=\beta+1\}} \log\frac{2\pi}{2\pi-\t-\v},\\
\mathcal{K}_4(\t,\v) & := \chi_{\{\s>1/2\}}\, (\t+\v)^{2\s-2(\alpha+1)}(2\pi-\t-\v)^{2\s-2(\beta+1)},\\
\mathcal{K}_5(\t,\v) & := \chi_{\{\s=1/2\}}\, (\t+\v)^{-2\alpha-1}(2\pi-\t-\v)^{-2\beta-1}
	\log\frac{(\t+\v)(2\pi-\t-\v)}{|\t-\v|},\\
\mathcal{K}_6(\t,\v) & := \chi_{\{\s<1/2\}}\, (\t+\v)^{-2\alpha-1}(2\pi-\t-\v)^{-2\beta-1} |\t-\v|^{2\s-1}.
\end{align*}
We denote the corresponding integral operators by $\mathcal{T}_i$, $i=1,\ldots,6$,
$$
\mathcal{T}_i f(\t) = \int_0^{\pi} \mathcal{K}_i(\t,\v) f(\v) \, d\mu_{\ab}(\v).
$$

Notice that all the kernels here are nonnegative and hence, from now on, we may and do assume that $f \ge 0$.
Clearly, to show any of the asserted mapping properties of $\mathcal{I}_{\s}^{\ab}$, it is sufficient to
do the same for each $\mathcal{T}_i$, $i=1,\ldots,6$, separately. On the other hand, to disprove one of the
mapping properties of $\mathcal{I}_{\s}^{\ab}$, it is enough to verify that it fails in case of one
particular $\mathcal{T}_i$.

While studying the proof below, it is convenient to keep in mind Figure \ref{fig1}.

\begin{proof}[{Proof of Theorem \ref{thm:LpLqJac}}]
Let $1\le p,q \le \infty$.
We will show the following mapping properties of $\mathcal{T}_i$, $i=1,\ldots,6$.
As easily seen, altogether they imply all the assertions we need to prove.
Note that the case $\alpha+\beta=-1$ is not excluded below.
\begin{itemize}
\item $\mathcal{T}_1$ is of strong type $(p,q)$ for all $p$ and $q$.
\item $\mathcal{T}_2$, being nontrivial only for $\s=\alpha+1$, is in this case 
	of strong type $(p,q)\neq (1,\infty)$ and not of restricted weak type $(1,\infty)$.
\item $\mathcal{T}_3$, being nontrivial only for $\s=\beta+1$, is in this case 
	of strong type $(p,q)\neq (1,\infty)$ and not of restricted weak type $(1,\infty)$.
\item $\mathcal{T}_4$, being nontrivial only when $\s>1/2$, satisfies in this case the following:
	\begin{itemize}
		\item[$\star$] if $\s \ge \delta$, then $\mathcal{T}_4$ is of strong type $(p,q)$ for all $p$ and $q$;
		\item[$\star$] if $\s < \delta$, then $\mathcal{T}_4$ has the positive and negative mapping
			properties of item (iii) of the theorem.
	\end{itemize}
\item $\mathcal{T}_5$, being nontrivial only for $\s=1/2$, satisfies in this case the following:
	\begin{itemize}
		\item[$\star$] if $\s \ge \delta$ (this actually forces $\s=\delta=1/2$), then $\mathcal{T}_5$ is
			of strong type $(p,q)\neq (1,\infty)$ and not of restricted weak type $(1,\infty)$;
		\item[$\star$] if $\s < \delta$, then $\mathcal{T}_5$ has the positive and negative mapping properties
			of item (iii) of the theorem.
	\end{itemize}
\item $\mathcal{T}_6$, being nontrivial only when $\s < 1/2$ (notice that this implies $\s < \delta$),
	has in this case all the positive and negative mapping properties of item (iii) of the theorem.
\end{itemize}

\noindent \textbf{Analysis of $\boldsymbol{\mathcal{T}_1}$.}
By (A) above, it is enough to verify that $\mathcal{T}_1$ is of strong type $(1,\infty)$, which is trivial.

\noindent \textbf{Analysis of $\boldsymbol{\mathcal{T}_2}$.}
We first show the positive results.
In view of (D) and (B), it is enough to verify the strong type $(p,\infty)$ for $1< p < \infty$.
By H\"older's inequality, we have
\begin{align*}
\|\mathcal{T}_2 f\|_{\infty} & \le \sup_{0<\t<\pi} \int_0^{\pi} \log\frac{2\pi}{\t+\v} f(\v)\, d\mu_{\ab}(\v)
	= \int_0^{\pi} \log\frac{2\pi}{\v} f(\v)\, d\mu_{\ab}(\v)\\
	& \le \|f\|_{p} \bigg( \int_0^{\pi} \Big( \log\frac{2\pi}{\v}\Big)^{p'}\, d\mu_{\ab}(\v)\bigg)^{1/p'}.
\end{align*}
Since the last integral is finite, the conclusion follows. 

To see that $\mathcal{T}_2$ is not of restricted weak type $(1,\infty)$ when $\s=\alpha+1$, we  
let $f_{\varepsilon}=\chi_{(0,\varepsilon)}$
for small $\varepsilon > 0$. Then $\|f_{\varepsilon}\|_{1} \simeq \varepsilon^{2\alpha+2}$ and
$$
\|\mathcal{T}_2 f_{\varepsilon}\|_{\infty} = \essup_{0<\t<\pi} \int_0^{\varepsilon} 
	\log\frac{2\pi}{\t+\v}\, d\mu_{\ab}(\v)
	= \int_0^{\varepsilon} \log\frac{2\pi}{\v}\, d\mu_{\ab}(\v) 
	\simeq \varepsilon^{2\alpha+2} \log\frac{2\pi}{\varepsilon}.
$$
Letting $\varepsilon \to 0$, we infer that the estimate 
$\|\mathcal{T}_2 f_{\varepsilon}\|_{\infty} \lesssim \|f_{\varepsilon}\|_{1}$, 
which is both the strong and weak type $(1,\infty)$ inequality, is not true. 

\noindent \textbf{Analysis of $\boldsymbol{\mathcal{T}_3}$.}
For symmetry reasons, treatment of $\mathcal{T}_3$ is parallel to that of $\mathcal{T}_2$ above.

\noindent \textbf{Analysis of $\boldsymbol{\mathcal{T}_4}$.}
Recall that $\delta = (\alpha+1)\vee (\beta+1) \vee (1/2)$.
Observe that when $\s \ge \delta$ we have $\mathcal{K}_4(\t,\v) \lesssim 1$ and therefore in this case
$\mathcal{T}_4$ shares the positive mapping properties of $\mathcal{T}_1$. 

It remains to analyze the case $1/2<\s<\delta$. To this end, for symmetry reasons, we may and do assume
that $\alpha \ge \beta$. Thus we actually consider the case $1/2<\s<\alpha+1=\delta$.
We will show that $\mathcal{T}_4$ is of weak type $(1,\frac{\delta}{\delta-\sigma})$ 
and of strong type $(1,q)$ for
$1\le q < \frac{\delta}{\delta-\sigma}$. In view of (B) and (C), this will imply that 
$\mathcal{T}_4$ is of strong type $(p,q)$
provided that $\frac{1}{q}\ge \frac{1}{p} - \frac{\s}{\delta}$, except for
$(p,q)= (1,\frac{\delta}{\delta-\sigma})$ and $(p,q) = (\frac{\delta}{\sigma},\infty)$, and in the latter
case $\mathcal{T}_4$ is of restricted weak type. Furthermore, we will prove that $\mathcal{T}_4$ is not of
weak (strong) type $(\frac{\delta}{\sigma},\infty)$ and hence, by (B), 
neither of strong type $(1,\frac{\delta}{\delta-\sigma})$.
Finally, we will check that $\mathcal{T}_4$ is not of restricted weak type $(p,q)$ when 
$\frac{1}{q}< \frac{1}{p}-\frac{\sigma}{\delta}$.

We claim that $\mathcal{T}_4$ is of weak type $(1,\frac{\alpha+1}{\alpha+1-\s})$. We have
\begin{align} \label{est1}
\mathcal{T}_4 f(\t) & \lesssim  \chi_{(0,\pi/2]}(\t) \, \t^{2\s-2(\alpha+1)}  \|f\|_{1} 
	+ \chi_{(\pi/2,\pi)}(\t) \,\big[1\vee(\pi-\t)^{2\s-2(\beta+1)}\big] \|f\|_{1} \\
	& \equiv \mathcal{T}_{4,0} f(\t) + \mathcal{T}_{4,\pi} f(\t). \nonumber
\end{align}
Then, uniformly in $\lambda>0$ and $0 \le f\in L^1(d\mu_{\ab})$,
$$
\mu_{\ab}\big(\{\mathcal{T}_{4,0} f(\t)>\lambda\}\big) 
	\lesssim \int_0^{(\lambda/\|f\|_1)^{1/(2\s-2(\alpha+1))}}
	\t^{2\alpha+1}\, d\t \simeq \bigg(\frac{\|f\|_1}{\lambda}\bigg)^{\frac{\alpha+1}{\alpha+1-\s}},
$$
so $\mathcal{T}_{4,0}$ is of weak type $(1,\frac{\alpha+1}{\alpha+1-\s})$. 
If $\beta=\alpha$, the same argument shows that
also $\mathcal{T}_{4,\pi}$ has this mapping property. 
As easily verified, for $\beta < \alpha$ the operator $\mathcal{T}_{4,\pi}$ is of
strong type $(1,\frac{\alpha+1}{\alpha+1-\s})$. The claim follows.
From the estimate \eqref{est1} it is also clear that $\mathcal{T}_4$ is of strong type $(1,q)$ for 
$q < \frac{\alpha+1}{\alpha+1-\sigma}$.

Passing to the negative results, we first disprove
the weak type $(\frac{\delta}{\sigma},\infty)$ which, by definition, coincides with strong type 
$(\frac{\delta}{\sigma},\infty)$. Recall that $\delta=\alpha+1$.
Take $f(\v) = \chi_{(0,1)}(\v) /(\v^{2\s}\log\frac{2}{\v})$. Then $f \in L^{\delta/\s}(d\mu_{\ab})$ since
$$
\|f\|_{\delta/\s}^{\delta/\s} =
\int_0^{\pi} \big(f(\v)\big)^{\delta/\s}\, d\mu_{\ab}(\v) \simeq 
	\int_0^1 \frac{d\v}{\v (\log\frac{2}{\v})^{\delta/\s}} < \infty.
$$ 
But
$$
\|\mathcal{T}_4 f\|_{\infty} \simeq \essup_{0<\t<\pi} \int_0^1 (\t+\v)^{2\s-2(\alpha+1)} 
	\frac{\v^{2\alpha+1}}{\v^{2\s}\log\frac{2}{\v}} \, d\v = \int_0^1 \frac{d\v}{\v\log\frac{2}{\v}} = \infty.
$$

Finally, we check that $\mathcal{T}_4$ is not of restricted weak type $(p,q)$ if 
$\frac{1}{q} < \frac{1}{p}-\frac{\sigma}{\delta}$. By the positive results justified above and an
\emph{au contraire} argument involving the interpolation theorem for Lorentz spaces invoked in (C),
it is enough to ensure that $\mathcal{T}_4$ is not of strong type $(p,q)$ if
$\frac{1}{q} < \frac{1}{p}-\frac{\sigma}{\delta}$.
Indeed, if $\mathcal{T}_4$ were of restricted weak type $(p,q)$ for some $p$ and $q$ such that
$\frac{1}q < \frac{1}p -\frac{\s}{\delta}$, then by interpolation with a strong type pair satisfying
$\frac{1}q = \frac{1}p - \frac{\s}{\delta}$, $p>1$, $q < \infty$, $\mathcal{T}_4$ would be of strong type
$(\widetilde{p},\widetilde{q})$ for some $\widetilde{p}$ and $\widetilde{q}$ satisfying
$\frac{1}{\widetilde{q}} < \frac{1}{\widetilde{p}}-\frac{\s}{\delta}$.

Recall that $\s < \delta = \alpha+1$ and assume that $\frac{1}q < \frac{1}p - \frac{\s}{\delta}$. 
Take $f(\v) = \v^A \chi_{(0,1)}(\v)$ with 
$A = -\frac{2\delta}p + 2\delta\varepsilon$, with $\varepsilon >0$ such that 
$\varepsilon < \frac{1}p-\frac{\s}{\delta}-\frac{1}q$. Then 
$$
\|f\|_p^p \simeq \int_0^1 \v^{Ap+2\alpha+1}\, d\v = \int_0^1 \v^{-1+2\delta p \varepsilon} \, d\v < \infty
$$
and hence $f\in L^p(d\mu_{\ab})$. We will show that $\mathcal{T}_4 f \notin L^q(d\mu_{\ab})$.
Let $\t \in (0,1)$. Observe that 
$$
\mathcal{T}_4 f(\t) \simeq \int_0^1 (\t+\v)^{2\s-2(\alpha+1)} \v^{A+2\alpha+1}\, d\v
	= \t^{2\s+A} \int_0^{1/\t} \frac{\v^{2\alpha+1+A}\, d\v}{(1+\v)^{2(\alpha+1)-2\s}}. 
$$
Since $1/\t > 1$, the last integral is certainly larger than a constant. Thus we get
$$
\mathcal{T}_4 f(\t) \gtrsim \t^{2\s+A}, \qquad \t \in (0,1). 
$$
If $q=\infty$, it suffices to observe that $2\s + A < 0$ (see above). For $q<\infty$, we write
$$
\|\mathcal{T}_4 f\|_q^q \gtrsim \int_0^1 \t^{(2\s+A)q+2\alpha+1}\, d\t.
$$
The last integral is infinite since $\frac{1}q < \frac{1}p-\frac{\s}{\delta}-\varepsilon 
= -\frac{A}{2\delta} - \frac{\s}{\delta}$ and consequently $(2\s+A)q + 2\alpha+1 < -1$.
The conclusion follows.

\noindent \textbf{Analysis of $\boldsymbol{\mathcal{T}_6}$.}
Let $\s < 1/2$. We begin with showing that $\mathcal{T}_6$ has all the asserted positive mapping properties.
A crucial observation in this direction is that $\mathcal{T}_6$ 
is comparable with $U_{2\s}$, see \eqref{Uxi}, in the sense that
$$
\mathcal{T}_6 f(\t) \simeq U_{2\s}f(\t), \qquad f \ge 0, \quad \t \in (0,\pi),
$$
and hence these operators have exactly the same $L^p-L^q$ mapping properties.
Indeed, by \cite[Lemma 4.2]{NS1} one has
$$
\mu_{\ab}\big( B(\t,|\t-\v|)\big) \simeq |\t-\v| (\t+\v)^{2\alpha+1}(2\pi-\t-\v)^{2\beta+1},
	\qquad \t,\v \in (0,\pi),
$$
so the kernels of $\mathcal{T}_6$ and $U_{2\s}$ are comparable.
Moreover, in view of the above estimate, 
$$
\mu_{\ab}(B(\t,r))\simeq r (r+\t)^{2\alpha+1} (r+\pi-\t)^{2\beta+1}, \qquad \t \in (0,\pi), \quad 0<r<\pi,
$$
and we see that $\mu_{\ab}(B_r) \gtrsim r^{2\delta}$ for any ball in $(0,\pi)$ of radius $r<\pi$.
Applying now Lemma \ref{lem:Ixi} we conclude that $U_{2\s}$, and hence also $\mathcal{T}_6$, is of weak type
$(1,\frac{\delta}{\delta-\sigma})$.

Next we claim that $\mathcal{T}_6$ is of strong type $(1,q)$ for $1\le q < \frac{\delta}{\delta-\s}$. 
If this is true then, in view of (B), (A) and (C), we get all the remaining
positive results for $\mathcal{T}_6$. 
To prove the claim, by Minkowski's integral inequality it is enough to ensure that
\begin{equation*}
\sup_{0<\v<\pi} \int_0^{\pi} \big(\mathcal{K}_6(\t,\v)\big)^q\, d\mu_{\ab}(\t) < \infty.
\end{equation*}
For symmetry reasons we may restrict the last integration to $\t < \pi/2$. Then we can write
\begin{align*}
\int_0^{\pi/2} \big(\mathcal{K}_6(\t,\v)\big)^q\, d\mu_{\ab}(\t) & \simeq
\int_0^{\pi/2} (\t+\v)^{-(2\alpha+1)q}|\t-\v|^{(2\s-1)q} \t^{2\alpha+1}\, d\t \\
& \le \bigg\{ \int_0^{\v/2} + \int_{\v/2}^{2\v} + \int_{2\v}^{2\pi}\bigg\} \,
	(\t+\v)^{-(2\alpha+1)q}|\t-\v|^{(2\s-1)q} \t^{2\alpha+1}\, d\t \\
& \equiv \mathcal{J}_1 + \mathcal{J}_2 + \mathcal{J}_3.
\end{align*}
We now estimate each of the three integrals uniformly in $\v\in (0,\pi)$. We have
$$
\mathcal{J}_1 \simeq \v^{-(2\alpha+1)q} \v^{(2\s-1)q} \int_0^{\v/2} \t^{2\alpha+1}\, d\t \simeq
	\v^{(2\s-2\alpha-2)q+2\alpha+2} \lesssim 1.
$$
The last bound holds because the condition $q < \frac{\delta}{\delta-\s}$ implies
$(2\s-2\alpha-2)q+2\alpha+2>0$. Further,
$$
\mathcal{J}_2 \simeq \v^{-(2\alpha+1)q} \v^{2\alpha+1} \int_{\v/2}^{2\v} |\t-\v|^{(2\s-1)q}\, d\t \simeq
	\v^{(2\s-2\alpha-2)q+2\alpha+2} \lesssim 1,
$$
where we used the fact that $(2\s-1)q > (2\s-1)\frac{\delta}{\delta-\s}\ge (2\s-1)\frac{1/2}{1/2-\s}=-1$.
Finally, 
$$
\mathcal{J}_3 \simeq \int_{2\v}^{2\pi} \t^{(2\s-2\alpha-2)q+2\alpha+1}\, d\t \lesssim 1.
$$
The claim follows.

Passing to negative results, we observe that 
after neglecting the characteristic functions $\chi_{\{\s>1/2\}}$ and $\chi_{\{\s<1/2\}}$, the
kernel $\mathcal{K}_6(\t,\v)$ is controlled from below by the kernel $\mathcal{K}_4(\t,\v)$.
Thus all the counterexamples given in the analysis of $\mathcal{T}_4$ for the case
$\s< \delta = \alpha+1$ are valid also in the present situation, assuming that $\delta > 1/2$
(note that the condition $\s>1/2$ was irrelevant for the counterexamples related to $\mathcal{T}_4$).

We now give counterexamples for the case $\delta =1/2$ (notice that this means that
$\ab \le -1/2$). This situation is different from that for $\delta > 1/2$ since now the bad behavior is
caused by the factor $|\t-\v|^{2\s-1}$ rather than the endpoint behavior of the kernel.
We follow the strategy from the analysis of $\mathcal{T}_4$ that reduces the task to giving 
two particular counterexamples.

Let us first disprove the weak (strong) type $(\frac{1}{2\s},\infty)$. Take
$f(\v) = \chi_{(1/2,1)}(\v)/((1-\v)^{2\s}\log\frac{2}{1-\v})$. Then $f \in L^{{1}/({2\s})}(d\mu_{\ab})$,
but 
$$
\|\mathcal{T}_6 f\|_{\infty} \simeq \essup_{0<\t<\pi}
	\int_{1/2}^1 \frac{|\t-\v|^{2\s-1}\, d\v}{(1-\v)^{2\s}\log\frac{1}{1-\v}}
	\ge \int_{1/2}^1 \frac{d\v}{(1-\v)\log\frac{1}{1-\v}} = \infty.
$$

Next, we disprove strong type $(p,q)$ when $\frac{1}q < \frac{1}p - 2\s$.
Consider $f(\v) = (1-\v)^A \chi_{(1/2,1)}(\v)$ with $A=-\frac{1}p+\varepsilon$, where $\varepsilon > 0$
is such that $\varepsilon < \frac{1}p - 2\s -\frac{1}q$. Then 
$$
\|f\|_p^p \simeq \int_{1/2}^1 (1-\v)^{-1+\varepsilon p}\, d\v < \infty,
$$
so $f \in L^p(d\mu_{\ab})$. We will show that $\mathcal{T}_6 f \notin L^q(d\mu_{\ab})$.
Let $\t \in (1,3/2)$. Changing the variable of integration we get
$$
\mathcal{T}_6 f(\t) \simeq \int_{1/2}^1 (\t-\v)^{2\s-1} (1-\v)^A\, d\v \simeq (\t-1)^{2\s+A}
	\int_0^{1/(2(\t-1))} \frac{\v^A\, d\v}{(1+\v)^{1-2\s}}.
$$
Since $1/(2(\t-1))>1$, the last integral is larger than a constant. Hence
$$
\mathcal{T}_6 f(\t) \gtrsim (\t-1)^{2\s+A}, \qquad \t \in (1,3/2).
$$
We see that $\mathcal{T}_6 f$ is not in $L^{\infty}$ since $2\s+A<0$. 
Neither it belongs to $L^q(d\mu_{\ab})$, $q< \infty$, because $(2\s+A)q< -1$ and consequently
$$
\|\mathcal{T}_6 f\|_q^{q} \gtrsim \int_1^{3/2} (\t-1)^{(2\s+A)q}\, d\t = \infty.
$$

\noindent \textbf{Analysis of $\boldsymbol{\mathcal{T}_5}$.}
We first consider the case $\delta=1/2=\s$. Observe that $\mathcal{K}_5(\t,\v)$ 
is controlled from above by the kernel $\mathcal{K}_6(\t,\v)$ with any fixed $\s < 1/2$. 
Therefore we can deduce from the already proved results for $\mathcal{T}_6$ that $\mathcal{T}_5$ 
is of strong type $(p,q)\neq (1,\infty)$. On the other hand, $\mathcal{T}_5$ is not of
restricted weak type $(1,\infty)$. To see this, let $f_{\varepsilon} = \chi_{(1-\varepsilon,1)}$ with
$\varepsilon >0$ small. Then $\|f_{\varepsilon}\|_{1} \simeq \varepsilon$ and
$$
\|\mathcal{T}_5 f_{\varepsilon}\|_{\infty} \simeq \essup_{0<\t<\pi} \int_{1-\varepsilon}^1 
	\log \frac{\pi}{|\t-\v|} \, d\v
	\ge \int_{1-\varepsilon}^{1} \log\frac{\pi}{1-\v}\, d\v \simeq \varepsilon \log\frac{\pi}{\varepsilon},
$$
and the conclusion follows by letting $\varepsilon \to 0$.

Assume next that $\delta> \s = 1/2$. For symmetry reasons, we may and do restrict to the case
$\alpha \ge \beta$; in particular, $\delta = \alpha+1$. We will show that $\mathcal{T}_5$ 
has the mapping properties from item (iii) of the theorem. 
Taking into account the above mentioned majorization
by the kernel $\mathcal{K}_6(\t,\v)$ and the positive results for $\mathcal{T}_6$, 
we see that to obtain the positive results for $\mathcal{T}_5$ it remains to analyze pairs $(p,q)$ 
satisfying $\frac{1}q = \frac{1}p -\frac{\s}{\delta}$.
By (C), this task can be reduced to showing that $\mathcal{T}_5$ 
is of weak type $(1,\frac{\delta}{\delta-\s})$.

To proceed, observe that the logarithmic factor in $\mathcal{K}_5(\t,\v)$ can be large only 
if $\t$ and $\v$ are comparable and simultaneously $\pi -\t$ and $\pi - \v$ are comparable; 
otherwise the logarithm is controlled by a constant. Thus
\begin{align*}
\mathcal{K}_5(\t,\v)  & \lesssim (\t+\v)^{-2\alpha-1} (2\pi - \t -\v)^{-2\beta-1} 
		+ \chi_{\{2\t/3<\v<3\t/2,\, 2(\pi-\t)/{3}<\pi-\v<3(\pi-\t)/2\}} 
		\mathcal{K}_5(\t,\v)\\
& \equiv \mathcal{K}_{5,1}(\t,\v) + \mathcal{K}_{5,2}(\t,\v).
\end{align*}
The operator $\mathcal{T}_{5,1}$ given by the kernel $\mathcal{K}_{5,1}(\t,\v)$ 
is of weak type $(1,\frac{\delta}{\delta-\s})$,
see the analysis of $\mathcal{T}_4$ (the argument given there is valid also for $\s=1/2$). 
As for the operator $\mathcal{T}_{5,2}$ defined by $\mathcal{K}_{5,2}(\t,\v)$, 
we will prove that it is even strong type $(1,\frac{\delta}{\delta-\s})$.
To achieve this, in view of (D) (notice that $\mathcal{K}_{5,2}(\t,\v)$ is symmetric and nonnegative), 
it is enough to verify that $\mathcal{T}_{5,2}$ is of strong type $(\frac{\delta}{\s},\infty)$.

We have
\begin{align*}
\mathcal{K}_{5,2}(\t,\v) & \lesssim \chi_{(0,\pi/2]}(\t) \chi_{\{2\t/3<\v<3\t/2\}} (\t+\v)^{-2\alpha-1}
	\log\frac{2\pi(\t+\v)}{|\t-\v|} \\
& \quad + \chi_{(\pi/2,\pi)}(\t) \chi_{\{2(\pi-\t)/{3}<\pi-\v<3(\pi-\t)/2\}}
	(2\pi-\t-\v)^{-2\beta-1} \log\frac{2\pi(2\pi-\t-\v)}{|\pi-\t - (\pi-\v)|}\\
& \equiv \mathcal{K}_{5,2,0}(\t,\v) + \mathcal{K}_{5,2,\pi}(\t,\v).
\end{align*}
Let $\mathcal{T}_{5,2,0}$ and $\mathcal{T}_{5,2,\pi}$ be the operators given by 
$\mathcal{K}_{5,2,0}(\t,\v)$ and $\mathcal{K}_{5,2,\pi}(\t,\v)$, respectively. 
Then, by H\"older's inequality,
\begin{align*}
\mathcal{T}_{5,2,0} f(\t) & \simeq \chi_{(0,\pi/2]}(\t) \, \t^{-2\alpha-1}
	\int_{2\t/3}^{3\t/2} \log\frac{6\pi\t}{|\t-\v|} f(\v) \v^{2\alpha+1} \, d\v \\
& \lesssim \chi_{(0,\pi/2]}(\t) \, \t^{-2\alpha-1} \|f\|_{\delta/\s} \Bigg( \int_{2\t/3}^{3\t/2}
	\bigg( \log\frac{6\pi}{|1-\v/\t|}\bigg)^{\delta/(\delta-\s)} \v^{2\alpha+1}\, d\v \Bigg)^{1-\s/\delta}.
\end{align*}
To estimate the last integral we change the variable of integration and get
$$
\int_{2\t/3}^{3\t/2}
	\bigg( \log\frac{6\pi}{|1-\v/\t|}\bigg)^{\delta/(\delta-\s)} \v^{2\alpha+1}\, d\v 
	= \t^{2\alpha+2}\int_{2/3}^{3/2} \bigg( \log\frac{6\pi}{|1-\psi|}\bigg)^{\delta/(\delta-\s)}
		\psi^{2\alpha+1}\, d\psi \simeq \t^{2\alpha+2}.
$$
Consequently, 
$$
\mathcal{T}_{5,2,0} f(\t) \lesssim \chi_{(0,\pi/2]}(\t) \, \t^{-2\alpha-1} 
\|f\|_{\delta/\s} \,\t^{(2\alpha+2)(1-\s/\delta)} \le \|f\|_{\delta/\s}, \qquad \t \in (0,\pi).
$$
It follows that $\mathcal{T}_{5,2,0}$ is of strong type $(\frac{\delta}{\s},\infty) = (2\alpha+2,\infty)$.
The same arguments apply to $\mathcal{T}_{5,2,\pi}$ and give strong type 
$(2\beta+2,\infty)$. Since $\beta \le \alpha$,
this implies strong type $(2\alpha+2,\infty)$ for $\mathcal{T}_{5,2,\pi}$, see (A). 
We conclude that $\mathcal{T}_{5,2}$ is of strong type $(\frac{\delta}{\s},\infty)$, as desired.

Passing to negative results for the case $\delta>\s=1/2$, 
we observe that $\mathcal{K}_5(\t,\v)$ is controlled from below by the kernel $\mathcal{K}_4(\t,\v)$
with any $\s>1/2$ fixed. Therefore the counterexamples given in the analysis of $\mathcal{T}_4$ 
imply that $\mathcal{T}_5$ is not of
restricted weak type $(p,q)$ if $\frac{1}q < \frac{1}p -\frac{\s}{\delta}$. It remains to disprove
the weak (strong) type $(\frac{\delta}{\s},\infty)= (2\alpha+2,\infty)$. 
Then automatically the strong type
$(1,\frac{\delta}{\delta-\s})$ will also be disproved, see (B). 
Take $f(\v) = \chi_{(0,1)}(\v)/(\v\log\frac{2}{\v})$. Then $f \in L^{2\alpha+2}(d\mu_{\ab})$. But
$$
\|\mathcal{T}_5 f\|_{\infty} \simeq \essup_{0<\t<\pi} \int_0^1 (\t+\v)^{-2\alpha-1} 
	\log\frac{(\t+\v)(2\pi-\t-\v)}{|\t-\v|} \frac{\v^{2\alpha+1}}{\v\log\frac{2}{\v}}\, d\v
		\ge \int_0^1 \frac{d\v}{\v\log\frac{2}{\v}} = \infty.
$$
This finishes the analysis of $\mathcal{T}_5$.

The proof of Theorem \ref{thm:LpLqJac} is complete.
\end{proof}

\subsection{$\boldsymbol{L^p-L^q}$ estimates in the Jacobi trigonometric `function' setting}
Similarly as for the proof of Theorem \ref{thm:LpLqJac},
to prove Theorem \ref{thm:LpLqJacL} we decompose, in the sense of $\simeq$, the kernel
$\mathbb{K}_{\s}^{\ab}(\t,\v)$ according to \eqref{Kleb} and the estimate of Theorem \ref{thm:potker}.
We get
$$
\mathbb{K}_{\s}^{\ab}(\t,\v) \simeq \sum_{i=1}^{6} \mathbb{K}_i(\t,\v),
$$
where
\begin{align*}
\mathbb{K}_1(\t,\v) & :=  (\t\v)^{\alpha+1/2}\big((\pi-\t)(\pi-\v)\big)^{\beta+1/2},\\
\mathbb{K}_2(\t,\v) & :=  \chi_{\{\s=\alpha+1\}}(\t\v)^{\alpha+1/2}\big((\pi-\t)(\pi-\v)\big)^{\beta+1/2}
	\log\frac{2\pi}{\t+\v},\\
\mathbb{K}_3(\t,\v) & :=  \chi_{\{\s=\beta+1\}}(\t\v)^{\alpha+1/2}\big((\pi-\t)(\pi-\v)\big)^{\beta+1/2}
	\log\frac{2\pi}{2\pi-\t-\v},\\
\mathbb{K}_4(\t,\v) & := \chi_{\{\s>1/2\}}\bigg[ \frac{\t\v}{(\t+\v)^2}\bigg]^{\alpha+1/2} 
	\bigg[ \frac{(\pi-\t)(\pi-\v)}{(2\pi-\t-\v)^2}\bigg]^{\beta+1/2} [(\t+\v)(2\pi-\t-\v)]^{2\s-1},\\
\mathbb{K}_5(\t,\v) & := \chi_{\{\s=1/2\}}\bigg[ \frac{\t\v}{(\t+\v)^2}\bigg]^{\alpha+1/2} 
	\bigg[ \frac{(\pi-\t)(\pi-\v)}{(2\pi-\t-\v)^2}\bigg]^{\beta+1/2} \log\frac{(\t+\v)(2\pi-\t-\v)}{|\t-\v|},\\
\mathbb{K}_6(\t,\v) & := \chi_{\{\s<1/2\}}\bigg[ \frac{\t\v}{(\t+\v)^2}\bigg]^{\alpha+1/2} 
	\bigg[ \frac{(\pi-\t)(\pi-\v)}{(2\pi-\t-\v)^2}\bigg]^{\beta+1/2} |\t-\v|^{2\s-1}.
\end{align*}
We denote the corresponding integral operators by $\mathbb{T}_i$, $i=1,\ldots,6$,
$$
\mathbb{T}_i f(\t) = \int_0^{\pi} \mathbb{K}_i(\t,\v) f(\v)\, d\v.
$$
Since all the kernels are nonnegative, in what follows we may and always do assume that $f \ge 0$.
Further, to show any of the asserted positive mapping properties of $\mathbb{I}_{\s}^{\ab}$, it is enough
to do the same for each $\mathbb{T}_i$, $i=1,\ldots,6$. On the other hand, to disprove one of the mapping
properties for $\mathbb{I}_{\s}^{\ab}$, it suffices to check that it fails in case of a particular 
$\mathbb{T}_i$.

While reading the proof below, we advise the reader to take advantage of Figures \ref{fig2}-\ref{fig4}
and also to draw own pictures for cases not covered by Figures \ref{fig2}-\ref{fig4}.

\begin{proof}[Proof of Theorem {\ref{thm:LpLqJacL}}. Part (a)]
Throughout we always assume that $1\le p,q \le \infty$ and that $\kappa \ge 0$, i.e. $\ab \ge -1/2$. 
We will verify the following mapping properties of $\mathbb{T}_i$, $i=1,\ldots,6$, which altogether
imply items (a1)-(a3) and their sharpness.
\begin{itemize}
\item $\mathbb{T}_1$ is of strong type $(p,q)$ for all $p$ and $q$.
\item $\mathbb{T}_2$, being nontrivial only for $\s=\alpha+1$, is of strong type $(p,q)$ for all $p$
	and $q$, except when $\s=1/2$ and $\alpha=-1/2$; in the latter case $\mathbb{T}_2$ is of strong type
	$(p,q)\neq (1,\infty)$.
\item $\mathbb{T}_3$, being nontrivial only for $\s=\beta+1$, is of strong type $(p,q)$ for all $p$
	and $q$, except when $\s=1/2$ and $\beta=-1/2$; in the latter case $\mathbb{T}_3$ is of strong type
	$(p,q)\neq (1,\infty)$.
\item $\mathbb{T}_4$, being nontrivial only for $\s>1/2$, is of strong type $(p,q)$ for all $p$ and $q$.
\item $\mathbb{T}_5$, being nontrivial only for $\s=1/2$, is in this case of strong type 
	$(p,q)\neq (1,\infty)$, and not of restricted weak type $(1,\infty)$.
\item $\mathbb{T}_6$, being nontrivial only for $\s<1/2$, satisfies in this case the following, 
	see Figure \ref{fig2}. $\mathbb{T}_6$ is of strong type $(p,q)$ if 
	$\frac{1}q \ge \frac{1}p-2\s$ and $(p,q)\neq (1,\frac{1}{1-2\s}),(\frac{1}{2\s},\infty)$,
	of weak type $(1,\frac{1}{1-2\s})$, and of restricted weak type $(\frac{1}{2\s},\infty)$.
	As for negative results, $\mathbb{T}_6$ is not of strong type $(1,\frac{1}{1-2\s})$, not of weak type
	$(\frac{1}{2\s},\infty)$, and not of restricted weak type $(p,q)$ when $\frac{1}q<\frac{1}p-2\s$.
\end{itemize}

\noindent \textbf{Analysis of $\boldsymbol{\mathbb{T}_1}$.}
Since $\mathbb{K}_1(\t,\v) \lesssim 1$, the conclusion follows.

\noindent \textbf{Analysis of $\boldsymbol{\mathbb{T}_2}$.}
We have
$$
\mathbb{K}_2(\t,\v) \lesssim (\t\v)^{\alpha+1/2}\log\frac{2\pi}{\t+\v}.
$$
If $\alpha>-1/2$, then the right-hand side here is controlled by a constant and hence $\mathbb{T}_2$
is of strong type $(p,q)$ for all $p$ and $q$. If $\alpha=-1/2$, then
$\mathbb{K}_2(\t,\v) \lesssim \log\frac{2\pi}{\t+\v}$ and, as we saw in the proof of 
Theorem \ref{thm:LpLqJac} (see the analysis of $\mathcal{T}_2$ with $\alpha=\beta=-1/2$), 
$\mathbb{T}_2$ is of strong type $(p,q)$ except for $(p,q)=(1,\infty)$.

\noindent \textbf{Analysis of $\boldsymbol{\mathbb{T}_3}$.}
We either use the same arguments as in case of $\mathbb{T}_2$, or conclude the mapping properties for
$\mathbb{T}_3$ from those for $\mathbb{T}_2$ by replacing $\t$ by $\pi-\t$, $\v$ by $\pi-\v$, and
exchanging the roles of $\alpha$ and $\beta$.

\noindent \textbf{Analysis of $\boldsymbol{\mathbb{T}_4}$, $\boldsymbol{\mathbb{T}_5}$ and
	$\boldsymbol{\mathbb{T}_6}$.}
	Observe that for $i=4,5,6$ we have
	$$
		\mathbb{K}_i(\t,\v) \le \mathcal{K}_i(\t,\v),
	$$
where the kernels $\mathcal{K}_i(\t,\v)$ on the right-hand side here are defined in Section \ref{sssec}
and taken with $\alpha=\beta=-1/2$. Moreover, $d\mu_{-1/2,-1/2}(\t) = d\t$. Therefore $\mathbb{T}_i$
are for $i=4,5,6$ controlled, respectively, by $\mathcal{T}_i$ from the proof of Theorem \ref{thm:LpLqJac}
with $\alpha$ and $\beta$ specified to be $-1/2$. Consequently, the positive mapping properties
of the latter operators stated and verified in the proof of Theorem \ref{thm:LpLqJac} are inherited,
respectively, by $\mathbb{T}_i$, $i=4,5,6$. This gives the asserted positive results for 
$\mathbb{T}_4$, $\mathbb{T}_5$ and $\mathbb{T}_6$.

On the other hand, for $\t$ and $\v$ separated from the endpoints of $(0,\pi)$, the kernels
$\mathbb{K}_5(\t,\v)$ and $\mathbb{K}_6(\t,\v)$ are comparable, respectively, with
$\mathcal{K}_5(\t,\v)$ and $\mathcal{K}_6(\t,\v)$ taken with $\alpha=\beta=-1/2$. Thus the relevant
counterexamples and arguments from the proof of Theorem \ref{thm:LpLqJac}, the analysis of
$\mathcal{T}_5$ and $\mathcal{T}_6$ with $\alpha=\beta=-1/2$ and hence $\delta=1/2$, work in
the present situation and deliver the desired negative results for $\mathbb{T}_5$ and $\mathbb{T}_6$.

The proof of part (a) in Theorem \ref{thm:LpLqJacL} is complete.
\end{proof}

\begin{proof}[Proof of Theorem {\ref{thm:LpLqJacL}}. Part (b)]
Recall that we consider $1\le p,q \le \infty$. Throughout this part of the proof we 
always assume that $\kappa < 0$, that is $\alpha \wedge \beta < -1/2$.
We will analyze separately the relevant mapping properties of $\mathbb{T}_i$, $i=1,\ldots,6$.
More precisely, we will prove the following items, which altogether imply (b1)-(b3) and their sharpness.
\begin{itemize}
\item $\mathbb{T}_1$ is (see Figure \ref{fig3}) of strong type $(p,q)$ if $\frac{1}{p}<1+\kappa$ and
	$\frac{1}q > -\kappa$, of weak type $(p,\frac{1}{-\kappa})$ for $\frac{1}p < 1+\kappa$, and of restricted
	weak type $(\frac{1}{1+\kappa},q)$ when $\frac{1}q \ge -\kappa$. On the other hand, $\mathbb{T}_1$ is
	not of strong type $(p,\frac{1}{-\kappa})$ for $\frac{1}{p} < 1+\kappa$, not of weak type
	$(\frac{1}{1+\kappa},q)$ for $\frac{1}q \ge -\kappa$, and not of restricted weak type $(p,q)$ if
	$\frac{1}{p} > 1+\kappa$ or $\frac{1}q < -\kappa$.
\item $\mathbb{T}_2$, being nontrivial only for $\s=\alpha+1$, has then all the positive mapping properties
	indicated above for $\mathbb{T}_1$, except for that $\mathbb{T}_2$ fails to be of restricted weak type
	$(\frac{1}{1+\kappa},\frac{1}{-\kappa})$ in case $\alpha \le \beta$.
\item $\mathbb{T}_3$, being nontrivial only for $\s=\beta+1$, has then all the positive mapping properties
	indicated above for $\mathbb{T}_1$, except for that $\mathbb{T}_3$ fails to be of restricted weak type
	$(\frac{1}{1+\kappa},\frac{1}{-\kappa})$ in case $\beta \le \alpha$.
\item $\mathbb{T}_4$, being nontrivial only for $\s>1/2$, has the positive mapping properties indicated
	above for $\mathbb{T}_1$. 
\item $\mathbb{T}_5$, being nontrivial only for $\s=1/2$, also has the positive mapping properties
	indicated above for $\mathbb{T}_1$. 
\item $\mathbb{T}_6$, being nontrivial only for $\s<1/2$, satisfies in this case the following:
	\begin{itemize}
		\item[$\star$] 
			if $\s>\kappa+1/2$, then $\mathbb{T}_6$ has the positive mapping properties of $\mathbb{T}_1$
			indicated above;
		\item[$\star$] 
			if $\s=\kappa+1/2$, then $\mathbb{T}_6$ has the positive mapping properties of $\mathbb{T}_1$
			indicated above, excluding the pair $(p,q)=(\frac{1}{1+\kappa},\frac{1}{-\kappa})$;
		\item[$\star$] 
			if $\s<\kappa+1/2$, then $\mathbb{T}_6$ is (see Figure \ref{fig4}) of strong type $(p,q)$
			when $\frac{1}p< 1+\kappa$, $\frac{1}q> -\kappa$ and $\frac{1}q \ge \frac{1}p-2\s$, of
			weak type $(p,\frac{1}{-\kappa})$ for $\frac{1}p<2\s-\kappa$, and of restricted weak type
			$(\frac{1}{2\s-\kappa},\frac{1}{-\kappa})$ and $(\frac{1}{1+\kappa},q)$ 
			for $\frac{1}q \ge 1+\kappa -2\s$; concerning negative results, $\mathbb{T}_6$ is 
			not of weak type $(\frac{1}{2\s-\kappa},\frac{1}{-\kappa})$
			and not of restricted weak type $(p,q)$ if $\frac{1}q<\frac{1}p-2\s$.
	\end{itemize}
\end{itemize}

\noindent \textbf{Analysis of $\boldsymbol{\mathbb{T}_1}$.}
Define the kernel
$$
\widetilde{\mathbb{K}}_1(\t,\v) := (\t\v)^{\kappa} \big[(\pi-\t)(\pi-\v)\big]^{\kappa},
$$
which is symmetric with respect to $\t=\pi /2$ and $\v=\pi/2$. Since
$$
\mathbb{K}_1(\t,\v) \lesssim \widetilde{\mathbb{K}}_1(\t,\v), 
$$
it is enough to prove the above mentioned positive mapping properties for the operator
$\widetilde{\mathbb{T}}_1$ associated to $\widetilde{\mathbb{K}}_1(\t,\v)$, rather than $\mathbb{T}_1$.

Let $p$ and $q$ be such that $\frac{1}p < 1+ \kappa$ and $\frac{1}q > -\kappa$.
Using H\"older's inequality, we get
\begin{equation} 
\widetilde{\mathbb{T}}_1 f(\t)  
	\le [\t(\pi-\t)]^{\kappa} \|f\|_{p} \bigg( \int_0^{\pi} 
	[\v(\pi-\v)]^{\kappa p'} \, d\v\bigg)^{1/p'} 
 \lesssim [\t(\pi-\t)]^{\kappa} \|f\|_{p} \label{e1};
\end{equation} 
here the $L^{p'}$ norm is indeed finite because the condition $\frac{1}p < 1+\kappa$ implies
$\kappa p' > -1$. Therefore
$$
\|\widetilde{\mathbb{T}}_1 f\|_{q} \lesssim \|f\|_{p} \bigg( \int_0^{\pi} [\t(\pi-\t)]^{\kappa q}\, 
	d\t\bigg)^{1/q}.
$$
Since  
$\kappa q > -1$, the last integral is finite and it follows that $\widetilde{\mathbb{T}}_1$ 
is of strong type $(p,q)$.

We now verify the weak type $(p,\frac{1}{-\kappa})$ for $\frac{1}p < 1+\kappa$. We have, see \eqref{e1},
\begin{align*}
\widetilde{\mathbb{T}}_1 f(\t) &
 \lesssim \chi_{(0,\pi/2]}(\t) \, \t^{\kappa} \|f\|_{p} 
 + \chi_{(\pi/2,\pi)}(\t)\,(\pi-\t)^{\kappa}\|f\|_{p} \\
 & \equiv \widetilde{\mathbb{T}}_{1,0} f(\t) + \widetilde{\mathbb{T}}_{1,\pi} f(\t).
\end{align*}
Then, for $\lambda>0$,
$$
\big|\big\{\widetilde{\mathbb{T}}_{1,0}f > \lambda\big\}\big| 
	= \Bigg|\Bigg\{ \t \in \Big(0,\frac{\pi}2\Big] : \t < \bigg(\frac{\lambda}{\|f\|_p}\bigg)^{1/\kappa}
		\Bigg\}\Bigg|
	\le \bigg(\frac{\|f\|_p}{\lambda}\bigg)^{-1/\kappa}.
$$
Since treatment of $\widetilde{\mathbb{T}}_{1,\pi}$ is analogous, the conclusion follows.

Next we prove the restricted weak type $(\frac{1}{1+\kappa},q)$ for $\frac{1}q \ge -\kappa$.
Let $E$ be a measurable subset of $(0,\pi)$. We have
\begin{equation} \label{e2}
\widetilde{\mathbb{T}}_1 \chi_{E}(\t) 
	\lesssim [\t(\pi-\t)]^{\kappa} \int_0^{|E|} \v^{\kappa}\, d\v
	\simeq [\t(\pi-\t)]^{\kappa} \|\chi_E\|_{1/(1+\kappa)}.
\end{equation}
Since $\frac{1}q \ge -\kappa$, this gives
$$
\widetilde{\mathbb{T}}_1 \chi_{E}(\t) \lesssim [\t(\pi-\t)]^{-1/q} \|\chi_{E}\|_{1/(1+\kappa)}. 
$$
Proceeding as in case of the weak type above, we see that
$$
\big|\big\{\widetilde{\mathbb{T}}_1 \chi_{E}> \lambda\big\}\big| \lesssim
 \bigg(\frac{\|\chi_{E}\|_{1/(1+\kappa)}}{\lambda}\bigg)^{q},
$$
uniformly in $E$ and $\lambda > 0$.

Passing to negative results, assume without any loss of generality that $\alpha \le \beta$, 
so that $\kappa = \alpha+1/2$.
We first observe that $\mathbb{T}_1$ is not of strong type $(p,\frac{1}{-\kappa})$
if $\frac{1}p < 1+\kappa$. Indeed, taking $f=\chi_{(0,1)}\in L^p$, we have
$$
\mathbb{T}_1 f(\t) \simeq \t^{\kappa} \int_0^1 \v^{\kappa}\, d\v \simeq \t^{\kappa}, \qquad \t \in (0,1),
$$
and hence
$$
\|\mathbb{T}_1 f\|_{-1/\kappa}^{-1/\kappa} \gtrsim \int_0^1 \t^{-1}\, d\t = \infty.
$$

Next, we disprove the weak type $(\frac{1}{1+\kappa},q)$ for $\frac{1}q \ge -\kappa$.
Let $f(\v) = \chi_{(0,1)}(\v)/(\v^{1+\kappa}\log\frac{2}\v)$. Then $f\in L^{1/(1+\kappa)}$, but
$$
\mathbb{T}_1 f(\t) \simeq \t^{\alpha+1/2}(\pi-\t)^{\beta+1/2} 
	\int_0^1 \frac{d\v}{\v \log\frac{2}{\v}} = \infty. 
$$

Finally, we show that the conditions $\frac{1}p \le 1+\kappa$ and $\frac{1}q \ge -\kappa$ are
necessary for $\mathbb{T}_1$ to be of restricted weak type $(p,q)$. 
Take $f_{\varepsilon}=\chi_{(0,\varepsilon)}$ with $\varepsilon < 1$.
Then $\|f_{\varepsilon}\|_p = \varepsilon^{1/p}$ and
$$
\mathbb{T}_1 f_{\varepsilon}(\t) \simeq \t^{\kappa} \int_0^{\varepsilon} \v^{\kappa}\, d\v
	= \t^{\kappa} \varepsilon^{\kappa+1}, \qquad \t \in (0,1).
$$
Therefore, for $\lambda > 0$,
$$
|\{\mathbb{T}_1 f_{\varepsilon} > \lambda\}| 
	\ge |\{\t \le 1 : c\,\t^{\kappa}\varepsilon^{\kappa+1}> \lambda\}|
	= \big|\big\{\t \le 1 : \t < (c\,\varepsilon^{\kappa+1}/\lambda)^{-1/\kappa}\big\}\big|
$$
with $c>0$ independent of $\varepsilon$ and $\lambda$. This gives
$$
|\{\mathbb{T}_1 f_{\varepsilon}> \lambda\}| 
	\ge \Big( \frac{c\,\varepsilon^{\kappa+1}}{\lambda}\Big)^{-1/\kappa},
	\qquad \lambda \ge c\, \varepsilon^{\kappa+1} > 0.
$$
Now we see that the restricted weak type $(p,q)$, $q<\infty$, of $\mathbb{T}_1$ implies
$$
\lambda^{1+1/(\kappa q)} \varepsilon^{-(\kappa+1)/(\kappa q)-1/p} \lesssim 1, \qquad
	\lambda \ge c \,\varepsilon^{\kappa+1} > 0.
$$
This forces $1+1/(\kappa q) \le 0$, i.e.~$\frac{1}q \ge -\kappa$. 
Letting $\lambda = c\,\varepsilon^{\kappa+1}$
we recover also the condition $\frac{1}p \le 1+\kappa$. When $q=\infty$, the weak type estimate for
$\mathbb{T}_1 f_{\varepsilon}$ reads as 
$\|\mathbb{T}_1 f_{\varepsilon}\|_{\infty} \lesssim \varepsilon^{1/p}$, $\varepsilon < 1$,
which means that
$$
\t^{\kappa} \varepsilon^{\kappa+1} \lesssim \varepsilon^{1/p}, \qquad \varepsilon,\t < 1.
$$
Consequently, we must have $0=\frac{1}q \ge -\kappa$ and $\frac{1}p \le 1+\kappa$.

\noindent \textbf{Analysis of $\boldsymbol{\mathbb{T}_2}$.}
Assuming that $\frac{1}p < 1+\kappa$ and using the bound $\log\frac{2\pi}{\t+\v} \le \log\frac{2\pi}{\v}$
we get, see \eqref{e1},
$$
\mathbb{T}_2f(\t) \lesssim [\t(\pi-\t)]^{\kappa} \|f\|_{p}. 
$$
As we already saw, this estimate implies that $\mathbb{T}_2$ is of strong type
$(p,q)$ if $\frac{1}q>-\kappa$, and of weak type $(p,\frac{1}{-\kappa})$. 

Let now $q$ satisfy $\frac{1}q>-\kappa$ and let $E$ be a measurable subset of $(0,\pi)$.
Using the bound $\log\frac{2\pi}{\t+\v} \le \log\frac{2\pi}{\t}$ and estimating similarly as in
\eqref{e2}, we get 
$$
\mathbb{T}_2\chi_{E}(\t) \lesssim \t^{\alpha+1/2}(\pi-\t)^{\beta+1/2} \log\frac{2\pi}{\t}
 \|\chi_E\|_{1/(1+\kappa)}. 
$$
Since $\frac{1}q>-\kappa$, this implies
\begin{equation} \label{e3}
\mathbb{T}_2\chi_{E}(\t) \lesssim [\t(\pi-\t)]^{-1/q} \|\chi_{E}\|_{1/(1+\kappa)},
\end{equation}
which leads to the restricted weak type $(\frac{1}{1+\kappa},q)$, see the analysis of 
$\widetilde{\mathbb{T}}_1$.
If $\beta < \alpha$ and $q=\frac{1}{-\kappa}$, then \eqref{e3} still holds and so in this case 
$\mathbb{T}_2$ is also of restricted weak type $(\frac{1}{1+\kappa},\frac{1}{-\kappa})$.

It remains to disprove the restricted weak type 
$(\frac{1}{1+\kappa},\frac{1}{-\kappa})$ when $\alpha \le \beta$ (i.e.~$\kappa = \alpha+1/2$).
Let $f_{\varepsilon} = \chi_{(0,\varepsilon)}$ for $\varepsilon > 0$ small. 
Then $\|f_{\varepsilon}\|_{1/(1+\kappa)} = \varepsilon^{1+\kappa}$ and 
$$
\mathbb{T}_2 f_{\varepsilon}(\t) 
	\simeq \t^{\kappa} \int_0^{\varepsilon} \v^{\kappa} \log\frac{2\pi}{\t+\v} \, d\v
	\simeq \t^{\kappa} \varepsilon^{1+\kappa} \log\frac{2\pi}{\t+\varepsilon}, \qquad \t \in (0,1).
$$
It follows that for a fixed constant $c>0$ independent of $\varepsilon$ and $\lambda>0$,
$$
|\{\mathbb{T}_2 f_{\varepsilon}> \lambda\}|  \ge 
	\Big| \Big\{ \t< \varepsilon : c\,\t^{\kappa}\varepsilon^{1+\kappa}
	\log\frac{\pi}{\varepsilon} > \lambda \Big\} \Big| 
 = \Bigg| \Bigg\{ \t< \varepsilon : \t < 
	\bigg(\frac{\lambda}{c\,\varepsilon^{1+\kappa}\log\frac{\pi}{\varepsilon}}\bigg)^{1/\kappa}\Bigg\}\Bigg|.
$$
Choosing $\lambda_{\varepsilon} = c \,\varepsilon^{2\kappa +1}\log\frac{\pi}{\varepsilon}$, we get the 
lower bound
$$
|\{\mathbb{T}_2 f_{\varepsilon}> \lambda_{\varepsilon}\}| \ge \varepsilon.
$$
However,
$$
\bigg( \frac{\|f_{\varepsilon}\|_{1/(1+\kappa)}}{\lambda_{\varepsilon}}\bigg)^{-1/\kappa} \simeq
	\varepsilon \Big( \log\frac{\pi}{\varepsilon}\Big)^{1/\kappa},
$$
and since the last expression tends faster to $0$ than $\varepsilon$ itself, the estimate
$$
|\{\mathbb{T}_2 f_{\varepsilon}> \lambda_{\varepsilon}\}| \lesssim 
	\bigg( \frac{\|f_{\varepsilon}\|_{1/(1+\kappa)}}{\lambda_{\varepsilon}}\bigg)^{-1/\kappa}
$$
cannot be uniform in $\varepsilon$ when $\varepsilon \to 0$.

\noindent \textbf{Analysis of $\boldsymbol{\mathbb{T}_3}$.}
See the corresponding comment in the proof of part (a), which remains in force also in the present situation.

\noindent \textbf{Analysis of $\boldsymbol{\mathbb{T}_4}$.}
Observe that
$$
\mathbb{K}_4(\t,\v) \lesssim \bigg[ \frac{\t\v(\pi-\t)(\pi-\v)}{(\t+\v)^2(\pi-\t + \pi-\v)^2}\bigg]^{\kappa}
	\lesssim \widetilde{\mathbb{K}}_1(\t,\v). 
$$
Consequently, $\mathbb{T}_4$ inherits the positive mapping properties of
$\widetilde{\mathbb{T}}_1$ justified above.

\noindent \textbf{Analysis of $\boldsymbol{\mathbb{T}_5}$.}
It can be easily seen that $\mathbb{K}_5(\t,\v)$ is controlled from above, uniformly in
$\t,\v\in (0,\pi)$, by the kernel $\mathbb{K}_6(\t,\v)$ with any fixed $\s<1/2$.
Therefore $\mathbb{T}_5$ inherits the positive mapping properties of $\mathbb{T}_6$ to be proved in
a moment. Choosing $\s$ such that $\kappa + 1/2 < \s < 1/2$, we infer that
$\mathbb{T}_5$ has the positive mapping properties of ${\mathbb{T}}_1$, provided
that what is claimed about $\mathbb{T}_6$ in the beginning of this proof is true.

\noindent \textbf{Analysis of $\boldsymbol{\mathbb{T}_6}$.}
Assume that $\s<1/2$. In order to show the positive results for $\mathbb{T}_6$, we observe that
$$
\mathbb{K}_6(\t,\v) \lesssim \bigg[ \frac{\t\v(\pi-\t)(\pi-\v)}{(\t+\v)^2(\pi-\t + \pi-\v)^2}\bigg]^{\kappa}
	|\t-\v|^{2\s-1}
$$
and consider the dominating kernel on the right-hand side here. Then, for symmetry reasons, we may restrict
to $\t \in (0,\pi/2]$. Thus it is enough to study the kernel
\begin{align*}
\widetilde{\mathbb{K}}_6(\t,\v) & := \chi_{(0,\pi/2]}(\t) \bigg[\frac{\t\v}{(\t+\v)^2}\bigg]^{\kappa}
	(\pi-\v)^{\kappa} |\t-\v|^{2\s-1} \\
& \simeq \chi_{(0,\pi/2]}(\t) (\pi-\v)^{\kappa} \begin{cases}
	\t^{-\kappa+2\s-1} \v^{\kappa}, & \v \le \t/2 \\
	|\t-\v|^{2\s-1}, & \t/2 < \v < 2\t \\
	\t^{\kappa} \v^{-\kappa+2\s-1}, & \v \ge 2\t
	\end{cases}
\end{align*}
and the associated operator $\widetilde{\mathbb{T}}_6$. We will prove that $\widetilde{\mathbb{T}}_6$
has all the positive mapping properties claimed for $\mathbb{T}_6$ in the beginning of this proof.

To proceed, we consider $\v > 3\pi/4$ and $\v\le 3\pi/4$, and estimate $\widetilde{\mathbb{K}}_6(\t,\v)$
as follows:
\begin{align*}
\widetilde{\mathbb{K}}_6(\t,\v) & \lesssim \chi_{(3\pi/4,\pi)}(\v) (\pi-\v)^{\kappa} \begin{cases}
	\t^{-\kappa+2\s-1}, & \v \le \t/2 \\
	1, & \t/2 < \v < 2\t \\
	\t^{\kappa}, & \v \ge 2\t
	\end{cases} \\
& \quad + |\t-\v|^{2\s-1} + \chi_{\{\v<\t\}} \t^{-\kappa+2\s-1} \v^{\kappa} + \chi_{\{\v>\t\}}\t^{\kappa}
	\v^{-\kappa+2\s-1} \\
& \equiv \widetilde{\mathbb{K}}_{6,1}(\t,\v) + \widetilde{\mathbb{K}}_{6,2}(\t,\v)
	+ \widetilde{\mathbb{K}}_{6,3}(\t,\v) + \widetilde{\mathbb{K}}_{6,4}(\t,\v).	
\end{align*}
Let $\widetilde{\mathbb{T}}_{6,j}$, $j=1,\ldots,4$, be the corresponding integral operators.
We will analyze these operators separately. The mapping properties we shall verify will altogether
imply the desired conclusion about $\widetilde{\mathbb{T}}_6$. This implication will be perhaps best
seen by looking at the three cases coming from the comparison of $2\s-\kappa$ and $\kappa+1$, 
or equivalently $\s$ and $\kappa+1/2$, see Figures \ref{fig3} and \ref{fig4}.

Treatment of $\widetilde{\mathbb{T}}_{6,1}$ is straightforward. Indeed, we have
$$
\widetilde{\mathbb{K}}_{6,1}(\t,\v) \lesssim \chi_{(3\pi/4,\pi)}(\v)(\pi-\v)^{\kappa} \t^{\kappa}
	\lesssim \widetilde{\mathbb{K}}_{1}(\t,\v), 
$$
and consequently $\widetilde{\mathbb{T}}_{6,1}$ inherits the positive mapping properties of
$\widetilde{\mathbb{T}}_{1}$ verified above, see Figure \ref{fig3}. Furthermore, the kernel
$\widetilde{\mathbb{K}}_{6,2}(\t,\v)$ was already considered in the proof of Theorem \ref{thm:LpLqJac},
the analysis of $\mathcal{T}_6$ with $\alpha=\beta=-1/2$.
In particular, we know that $\widetilde{\mathbb{T}}_{6,2}$ possesses the positive
mapping properties shown for $\mathbb{T}_6$ in the proof of part (a), see Figure \ref{fig2}.

It remains to study $\widetilde{\mathbb{T}}_{6,3}$ and $\widetilde{\mathbb{T}}_{6,4}$.
Assuming that $p>1$ and $q<\infty$, we will verify the following items, which will complete proving
the positive results for $\mathbb{T}_6$.
\begin{itemize}
\item $\widetilde{\mathbb{T}}_{6,3}$ is of strong type $(p,q)$ if $\frac{1}p < 1+\kappa$ and
	$\frac{1}q \ge \frac{1}p -2\s$, and of restricted weak type $(\frac{1}{1+\kappa},q)$ for
	$\frac{1}q \ge 1+\kappa-2\s$.
\item $\widetilde{\mathbb{T}}_{6,4}$ is of strong type $(p,q)$ if $\frac{1}q > -\kappa$ and
	$\frac{1}q \ge \frac{1}p-2\s$, of weak type $(p,\frac{1}{-\kappa})$ for $\frac{1}p< 2\s-\kappa$,
	and of restricted weak type $(\frac{1}{2\s-\kappa},\frac{1}{-\kappa})$ in case $2\s-\kappa < 1$.
\end{itemize}

\noindent \textbf{Analysis of $\boldsymbol{\widetilde{\mathbb{T}}_{6,3}}$.}
We have
$$
\widetilde{\mathbb{T}}_{6,3}f(\t) = \t^{-\kappa+2\s-1}\int_0^{\t} \v^{\kappa} f(\v)\, d\v. 
$$
Assume that $\frac{1}p < 1+\kappa$, i.e.~$\kappa p'>-1$. By H\"older's inequality,
$$
\widetilde{\mathbb{T}}_{6,3}f(\t) \le \t^{-\kappa+2\s-1}\|f\|_p \bigg( \int_0^{\t} \v^{\kappa p'} \,d\v
	\bigg)^{1/p'} \lesssim \t^{2\s-1/p} \|f\|_p. 
$$
This estimate implies that $\widetilde{\mathbb{T}}_{6,3}$ is of strong type $(p,q)$ if
$(2\s-1/p)q>-1$, i.e.~$\frac{1}q > \frac{1}p -2\s$. Also, $\widetilde{\mathbb{T}}_{6,3}$ is
of weak type $(p,q)$ if $(2\s-1/p)q=-1$, i.e.~$\frac{1}q = \frac{1}p -2\s$. By interpolation,
$\widetilde{\mathbb{T}}_{6,3}$ is actually of strong type $(p,q)$ for $\frac{1}q = \frac{1}p-2\s$
(recall that we consider $q<\infty$).

It remains to check that $\widetilde{\mathbb{T}}_{6,3}$ is of restricted weak type $(\frac{1}{1+\kappa},q)$
for $\frac{1}q \ge 1+\kappa -2\s$. Let $E$ be a measurable subset of $(0,\pi)$. Clearly,
$$
\widetilde{\mathbb{T}}_{6,3}\chi_E(\t) \le \t^{-\kappa+2\s-1} \int_0^{|E|} \v^{\kappa}\, d\v
	\simeq \t^{-\kappa+2\s-1} |E|^{1+\kappa}.
$$
Consequently, 
$$
\widetilde{\mathbb{T}}_{6,3}\chi_E(\t) \lesssim \t^{-1/q} \|\chi_{E}\|_{1/(1+\kappa)}, 
$$
and now the conclusion easily follows.

\noindent \textbf{Analysis of $\boldsymbol{\widetilde{\mathbb{T}}_{6,4}}$.}
Let $p>1$ and $q<\infty$. Using H\"older's inequality, we get
$$
\widetilde{\mathbb{T}}_{6,4}f(\t) = \t^{\kappa} \int_{\t}^{\pi} \v^{-\kappa+2\s-1}f(\v)\, d\v
	\le \t^{\kappa}\|f\|_p \bigg( \int_{\t}^{\pi} \v^{(-\kappa+2\s-1)p'} \, d\v \bigg)^{1/p'}.
$$
To estimate the $L^{p'}$ norm here, we write
$$
\int_{\t}^{\pi} \v^{(-\kappa+2\s-1)p'}\, d\v \lesssim \begin{cases}
	1, & (-\kappa+2\s-1)p'>-1 \\
	\log\frac{\pi}{\t}, & (-\kappa+2\s-1)p'=-1 \\
	\t^{(-\kappa+2\s-1)p'+1}, & (-\kappa+2\s-1)p'<-1
	\end{cases}.
$$
Thus, for $\t \in (0,\pi)$,
$$
\widetilde{\mathbb{T}}_{6,4}f(\t) \lesssim \|f\|_p \begin{cases}
	\t^{\kappa}, & 2\s-\kappa > \frac{1}p \\
	\t^{\kappa}(\log\frac{\pi}{\t})^{1/p'}, & 2\s-\kappa = \frac{1}{p} \\
	\t^{2\s-1/p}, & 2\s-\kappa < \frac{1}p
	\end{cases}.
$$

Assume that $\frac{1}q > -\kappa$, i.e.~$\kappa q > -1$. If $\frac{1}p \le 2\s-\kappa$,
then $\widetilde{\mathbb{T}}_{6,4}$ is of strong type $(p,q)$. Indeed, in this case
$$
\widetilde{\mathbb{T}}_{6,4}f(\t) \lesssim \t^{\kappa} \Big(\log\frac{2\pi}{\t}\Big)^{1/p'}\|f\|_p,
$$
and the function $\t \mapsto \t^{\kappa} (\log\frac{2\pi}{\t})^{1/p'}$ is in $L^q$. Moreover,
$\widetilde{\mathbb{T}}_{6,4}$ is also of strong type $(p,q)$ if $\frac{1}p>2\s-\kappa$ and in addition
$\frac{1}q>\frac{1}p-2\s$, since then
$$
\widetilde{\mathbb{T}}_{6,4}f(\t) \lesssim \t^{2\s-1/p} \|f\|_p, 
$$
and the function $\t \mapsto \t^{2\s-1/p}$ belongs to $L^q$. Furthermore, for $\frac{1}p>2\s-\kappa$
and $\frac{1}q=\frac{1}p-2\s$, $\widetilde{\mathbb{T}}_{6,4}$ is of weak type $(p,q)$, in view of the bound
$$
\widetilde{\mathbb{T}}_{6,4}f(\t) \lesssim \t^{2\s-1/p}\|f\|_p = \t^{-1/q}\|f\|_p. 
$$
By interpolation, $\widetilde{\mathbb{T}}_{6,4}$ is in fact of strong type $(p,q)$ provided that
$\frac{1}p>2\s-\kappa$ and $\frac{1}q=\frac{1}p-2\s$.

Next, we verify the weak type $(p,\frac{1}{-\kappa})$ of $\widetilde{\mathbb{T}}_{6,4}$ for
$\frac{1}p< 2\s-\kappa$. This, however, is straightforward with the aid of the bound
$$
\widetilde{\mathbb{T}}_{6,4} f(\t) \lesssim \t^{\kappa} \|f\|_p. 
$$ 

Finally, $\widetilde{\mathbb{T}}_{6,4}$ is of restricted weak type $(\frac{1}{2\s-\kappa},\frac{1}{-\kappa})$
if $2\s-\kappa < 1$. Indeed, for measurable subsets $E$ of $(0,\pi)$,
$$
\widetilde{\mathbb{T}}_{6,4}\chi_E(\t) = \t^{\kappa}\int_{\t}^{\pi} \v^{-\kappa+2\s-1}\chi_{E}(\v)\,d\v
	\le \t^{\kappa} \int_0^{|E|} \v^{-\kappa+2\s-1}\, d\v \lesssim \t^{\kappa}\|\chi_E\|_{1/(2\s-\kappa)}
$$
uniformly in $\t\in (0,\pi)$, and the conclusion follows.

Now the desired positive results for $\mathbb{T}_6$ are justified. Passing to negative results,
we first ensure that $\mathbb{T}_6$ is not of restricted weak type $(p,q)$ if $\frac{1}q < \frac{1}p-2\s$.
Observe that
$$
\mathbb{K}_6(\t,\v) \simeq |\t-\v|^{2\s-1}, \qquad \t,\v \in (1/2,3/2).
$$
Thus we can invoke the arguments disproving the same mapping property for $\mathbb{T}_6$ in the
proof of part (a), see the analysis of $\mathcal{T}_6$, the case $\alpha=\beta=-1/2$ and hence $\delta=1/2$,
in the proof of Theorem \ref{thm:LpLqJac}.

Finally, we disprove the weak type $(\frac{1}{2\s-\kappa},\frac{1}{-\kappa})$ of $\mathbb{T}_6$ in case
$2\s-\kappa < 1+\kappa$. We may assume that $\alpha \le \beta$, so that $\kappa=\alpha+1/2$. Since
$$
\mathbb{K}_6(\t,\v) \ge \chi_{\{2>\v>2\t\}} \mathbb{K}_6(\t,\v) \gtrsim \chi_{\{2>\v>2\t\}}
	\t^{\kappa} \v^{-\kappa+2\s-1}, 
$$
we have
$$
\mathbb{T}_6 f(\t) \gtrsim \t^{\kappa} \int_{2\t}^2 \v^{-\kappa+2\s-1} f(\v)\, d\v, \qquad \t \in (0,1).
$$
Let $f(\v) = \chi_{(0,2)}(\v) \v^{\kappa-2\s}/\log\frac{\pi}{\v}$. Then $f\in L^{1/(2\s-\kappa)}$.
We will show that $\mathbb{T}_6 f \notin L^{-1/\kappa,\infty}$. Notice that
$$
\mathbb{T}_6 f(\t) \gtrsim \t^{\kappa} \int_{2\t}^{2} \frac{d\v}{\v\log\frac{\pi}{\v}} \equiv
	\t^{\kappa} h(\t), \qquad \t \in (0,1),
$$
where $h(\t)$ is a function increasing to $\infty$ as $\t$ decreases to $0$. Take $M>0$ arbitrarily large.
There exists $\eta>0$ such that $h(\t)>M$ for $\t \in (0,\eta)$. 
Consequently, for a fixed constant $c>0$ independent of $M$ and $\lambda > 0$,
$$
|\{\mathbb{T}_6 f > \lambda\}| \ge |\{c\t^{\kappa}h(\t)>\lambda\}| \ge
	|\{\t<\eta : c\t^{\kappa}M>\lambda\}| = \bigg|\bigg\{ \t < \eta : 
		\t < \Big(\frac{\lambda}{cM}\Big)^{1/\kappa} \bigg\}\bigg|.
$$
For $\lambda$ so large that $(\lambda/(cM))^{1/\kappa} < \eta$ is satisfied, we then get the lower bound
$$
|\{\mathbb{T}_6 f> \lambda\}| \ge \Big(\frac{cM}{\lambda}\Big)^{-1/\kappa}.
$$
Thus we see that
$$
\sup_{\lambda >0} \lambda |\{\mathbb{T}_6 f > \lambda\}|^{-\kappa} \ge cM
$$
and so the $L^{-1/\kappa,\infty}$ quasinorm of $\mathbb{T}_6 f$ cannot be finite.

The proof of part (b) in Theorem \ref{thm:LpLqJacL} and its sharpness is complete.
\end{proof}

\subsection{$\boldsymbol{L^p-L^q}$ estimates in the Fourier-Bessel settings}
We will give short proofs of Theorems \ref{thm:LpLqFB} and \ref{thm:LpLqFBL} by means of relating
the Fourier-Bessel potential kernels to the Jacobi potential kernels with suitably chosen parameters of type,
and then making use of the already proved results in the Jacobi settings.

\begin{proof}[{Proof of Theorem \ref{thm:LpLqFB}}]
Observe that, in view of Theorem \ref{thm:potkerFB} and Theorem \ref{thm:potker}, the potential kernel
in the natural measure Fourier-Bessel framework is controlled by the potential kernel in the Jacobi
trigonometric setting with parameters $\alpha=\nu$ and $\beta=-1/2$,
$$
\mathcal{K}_{\s}^{\nu}(x,y) \lesssim \mathcal{K}_{\s}^{\nu,-1/2}(\pi x,\pi y), \qquad x,y \in (0,1).
$$
Moreover, the corresponding measures are comparable,
$$
d\mu_{\nu}(x) \simeq d\mu_{\nu,-1/2}(\pi x), \qquad x \in (0,1).
$$
Thus we see that the Fourier-Bessel potential operator $\mathcal{I}_{\s}^{\nu}$ is controlled by the Jacobi
potential operator $\mathcal{I}_{\s}^{\nu,-1/2}$. Hence $\mathcal{I}_{\s}^{\nu}$ inherits all the
positive mapping properties of $\mathcal{I}_{\s}^{\nu,-1/2}$ stated in Theorem \ref{thm:LpLqJac}.
Consequently, the desired positive results for $\mathcal{I}_{\s}^{\nu}$ follow.

To verify the negative results, notice that
$$
\mathcal{K}_{\s}^{\nu}(x,y) \simeq \mathcal{K}_{\s}^{\nu,-1/2}(\pi x, \pi y), \qquad x,y \in (0,3/4).
$$
This comparability for $x$ and $y$ staying away from the right endpoint of $(0,1)$, together with 
the arguments given in the proof of Theorem \ref{thm:LpLqJac}, 
shows that $\mathcal{I}_{\s}^{\nu}$ has the same negative mapping
properties as those for $\mathcal{I}_{\s}^{\nu,-1/2}$ stated in Theorem \ref{thm:LpLqJac}.
The conclusion follows.
\end{proof}

\begin{proof}[{Proof of Theorem \ref{thm:LpLqFBL}}]
By \eqref{KlebFB}, Theorem \ref{thm:potkerFB}, Theorem \ref{thm:potker} and \eqref{Kleb}, we see that
the potential kernels in the Lebesgue measure Jacobi and the Lebesgue measure Fourier-Bessel settings are
comparable in the sense that
$$
\mathbb{K}_{\s}^{\nu}(x,y) \simeq \mathbb{K}_{\s}^{\nu,1/2}(\pi x, \pi y), \qquad x,y \in (0,1).
$$
Therefore the corresponding potential operators $\mathbb{I}_{\s}^{\nu}$ and $\mathbb{I}_{\s}^{\nu,1/2}$
possess exactly the same positive and negative $L^p-L^q$ mapping properties. Thus Theorem \ref{thm:LpLqFBL}
follows from Theorem \ref{thm:LpLqJacL} specified to $\alpha=\nu$ and $\beta = 1/2$.
\end{proof}

\section*{Appendix: summary of notation}

For reader's convenience, in Table 1 below we summarize the notation of various objects in
the contexts appearing in this paper, that is 
	\begin{itemize}
	 \item \textbf{Jacobi} \textbf{trig}onometric \textbf{pol}ynomial setting, 
	 \item \textbf{Jacobi} \textbf{trig}onometric \textbf{fun}ction setting,
	 \item \textbf{Jacobi} trigonometric function setting \textbf{scaled} to the interval $(0,1)$,
	 \item \textbf{nat}ural \textbf{meas}ure \textbf{F}ourier-\textbf{B}essel setting,
	 \item \textbf{Leb}esgue \textbf{meas}ure \textbf{F}ourier-\textbf{B}essel setting.
	\end{itemize}
{{
\begin{table*}[htbp]
\centering
\begin{tabular}{c|c|c|c|c|c|}
\cline{2-6}
 & Jacobi trig pol 
 & Jacobi trig fun
 & Jacobi scaled
 & FB nat meas
 & FB Leb meas \\ 
\cline{1-6} 
   \multicolumn{1}{|c|}{eigenfunctions} 
   & $\mathcal{P}_n^{\ab}$
   & $\phi_n^{\ab}$ 
   & $\widetilde{\phi}_n^{\ab}$
   & $\phi_n^{\nu}$
   & $\psi_n^{\nu}$ \\ 
\cline{1-6} 
   \multicolumn{1}{|c|}{reference measure} 
   & $d\mu_{\ab}$
   & $d\t$ 
   & $dx$ 
   & $d\mu_{\nu}$
   & $dx$ \\ 
\cline{1-6} 
   \multicolumn{1}{|c|}{`Laplacian'} 
   & $\mathcal{J}^{\ab}$
   & $\mathbb{J}^{\ab}$ 
   & $\widetilde{\mathbb{J}}^{\ab}$ 
   & $\mathcal{L}^{\nu}$
   & $\mathbb{L}^{\nu}$ \\ 
\cline{1-6} 
   \multicolumn{1}{|c|}{heat kernel} 
   & 
   & 
   & $\widetilde{\mathbb{G}}_t^{\ab}(x,y)$ 
   & 
   & $\mathbb{G}_t^{\nu}(x,y)$ \\ 
\cline{1-6} 
   \multicolumn{1}{|c|}{Poisson kernel} 
   & $\mathcal{H}_t^{\ab}(\t,\v)$
   & $\mathbb{H}_t^{\ab}(\t,\v)$ 
   & $\widetilde{\mathbb{H}}_t^{\ab}(x,y)$ 
   & $\mathcal{H}_t^{\nu}(x,y)$
   & $\mathbb{H}_t^{\nu}(x,y)$ \\ 
\cline{1-6} 
   \multicolumn{1}{|c|}{potential kernel} 
 & $\mathcal{K}_{\s}^{\ab}(\t,\v)$
   & $\mathbb{K}_{\s}^{\ab}(\t,\v)$ 
   & 
   & $\mathcal{K}_{\s}^{\nu}(x,y)$
   & $\mathbb{K}_{\s}^{\nu}(x,y)$ \\ 
\cline{1-6} 
   \multicolumn{1}{|c|}{potential operator} 
 & $\mathcal{I}_{\s}^{\ab}$
   & $\mathbb{I}_{\s}^{\ab}$ 
   & 
   & $\mathcal{I}_{\s}^{\nu}$
   & $\mathbb{I}_{\s}^{\nu}$ \\ 
\cline{1-6}
\noalign{\bigskip}
\end{tabular}
\caption{Summary of notation.}
\label{tab:notation}
\end{table*}
}}


\end{document}